\documentclass{amsart} 

\usepackage[margin=1in]{geometry}
\usepackage[utf8]{inputenc}

\usepackage{amssymb,amsfonts, amsthm, enumerate}
\usepackage{mathrsfs}
\usepackage{stmaryrd}
\usepackage[mathcal]{eucal}
\usepackage{hyperref}
\usepackage{mathtools}
\usepackage{enumitem}

\swapnumbers
\theoremstyle{plain}
\newtheorem{theorem}{Theorem}[section]

\newtheorem{proposition}[theorem]{Proposition}
\newtheorem{lemma}[theorem]{Lemma}
\newtheorem{corollary}[theorem]{Corollary}

\newtheorem{theorem*}{Theorem}
\newtheorem*{corollary*}{Corollary}

\theoremstyle{definition}

\newtheorem{definition}[theorem]{Definition}
\newtheorem{examples}[theorem]{Examples}
\newtheorem{convention}[theorem]{Convention}
\newtheorem{notation}[theorem]{Notation}

\newtheorem{remark}[theorem]{Remark}
\newtheorem{remarks}[theorem]{Remarks}

\theoremstyle{remark}

\makeatletter
\renewcommand\subsection{\@startsection{subsection}{2}
  \z@{-.5\linespacing\@plus-.7\linespacing}{.5\linespacing}
  {\normalfont\scshape}}
\renewcommand\subsubsection{\@startsection{subsubsection}{3}
  \z@{.5\linespacing\@plus.7\linespacing}{-.5em}
  {\normalfont\scshape}}
\makeatother

\DeclareMathOperator{\wneg}{\mathrel{\ooalign{\hss$\neg$\hss\cr\kern0.2ex\raise1.0ex\hbox{\scalebox{0.7}{w}}}}}

\DeclareMathOperator{\card}{card}
\DeclareMathOperator{\Discrete}{Discrete}
\DeclareMathOperator{\dd}{d}
\DeclareMathOperator{\Mod}{Mod}
\DeclareMathOperator{\Sent}{Sent}
\DeclareMathOperator{\thry}{Th}
\DeclareMathOperator{\thryL}{Th_{\mathscr{L}}}

\newcommand{\cl}{\mathscr{L}_\text{cont}}
\newcommand{\bcl}{\mathscr{L}_\text{basic}}
\newcommand{\fcl}{\mathscr{L}_\text{full}}

\newcommand{\ab}{\ensuremath{a_{\bullet}}}
\newcommand{\bb}{\ensuremath{b_{\bullet}}}
\newcommand{\cb}{\ensuremath{c_{\bullet}}}
\newcommand{\sC}{\ensuremath{\mathscr{C}}}
\newcommand{\DD}{\mathbb{D}}
\newcommand{\EE}{\mathbb{E}}

\newcommand{\fb}{\ensuremath{f_{\bullet}}}

\newcommand{\Eb}{\ensuremath{E_{\bullet}}}
\newcommand{\sL}{\ensuremath{\mathscr{L}}}
\newcommand{\cM}{\ensuremath{\mathcal{M}}}
\newcommand{\cK}{\ensuremath{\mathcal{K}}}
\newcommand{\cN}{\ensuremath{\mathcal{N}}}

\newcommand{\Pb}{\ensuremath{\varphi_{\bullet}}}
\newcommand{\PCd}{PC$_{\Delta}$}
\newcommand{\RPCd}{RPC$_{\Delta}$}
\newcommand{\RR}{\ensuremath{\mathbb{R}}}
\newcommand{\tb}{\ensuremath{t_{\bullet}}}

\begin{document}

\title[Metastable convergence and logical compactness]
{Metastable convergence and logical compactness}

\author[X. Caicedo]{Xavier Caicedo}

\address{Department of Mathematics\\
  Universidad de los Andes\\
  Apartado Aéreo 4976\\
  Bogotá, Colombia}
  
\email{xcaicedo@uniandes.edu.co}

\author[E. Dueñez]{Eduardo Dueñez}

\address{Department of Mathematics\\
  The University of Texas at San Antonio\\
  One UTSA Circle\\
  San Antonio, TX 78249-0664\\
  USA}

\email{eduardo.duenez@utsa.edu}

\author[J. Iovino]{José N. Iovino}

\address{Department of Mathematics\\
  The University of Texas at San Antonio\\
  One UTSA Circle\\
  San Antonio, TX 78249-0664\\
  USA}
  
\email{jose.iovino@utsa.edu}

\date{\today}
\thanks{Dueñez and Iovino were partially funded by NSF grant DMS-1500615.
  Caicedo was partially supported by a Universidad de los Andes Science Faculty grant.}
\subjclass[2000]{03Cxx}
\keywords{Compactness, metastability}

\begin{abstract}
  The concept of metastable convergence was identified by Tao;
  it allows converting theorems about convergence into stronger theorems about uniform convergence.
The Uniform Metastability Principle (UMP) states that if $T$ is a  theorem about convergence, then the fact that $T$ is valid implies automatically that its (stronger) uniform version is valid, provided that $T$ can be stated in certain logical frameworks. 
In this paper we identify precisely the logical frameworks $\sL$ for which UMP holds.
 More precisely, we prove that the UMP holds for $\sL$ if and only if $\sL$ is a compact logic.
 We also prove a topological version of this equivalence.
We conclude by proving new characterizations of logical compactness that yield additional information about the UMP.
\end{abstract}

\maketitle

\section*{Introduction}

The concept of metastable convergence was isolated by Tao.
It played a crucial role in the proof of his remarkable result on the convergence of ergodic averages for polynomial abelian group actions~\cite{Tao:2008}, and  again in Walsh's generalization of Tao's theorem to polynomial nilpotent group actions~\cite{Walsh:2012}.

Metastability is a reformulation of the Cauchy property for sequences, i.e., a sequence in a metric space is metastable if and only if it is Cauchy.
However, for a collection of sequences, being uniformly metastable is weaker than being uniformly Cauchy.
In his 2008 paper~\cite{Tao:2008}, Tao proved a metastable version of the classical dominated convergence theorem that he then used to obtain uniform metastability rates of convergence for ergodic averages.

Tao remarked in his paper that metastability is connected to ideas from mathematical logic.
He noted, thanking U.~Kohlenbach for the observation,  that metastability is an instance of Kreisel's no-counterexample interpretation~\cite{Kreisel:1951,Kreisel:1952}, which is in turn a particular case of Gödel's Dialectica interpretation~\cite{Godel:1958}.  
In fact, before Tao's paper, the concept had been used  under different nomenclature by Avigad, Gerhardy, Kohlenbach, and Towsner in the context of proof mining. See~\cite{Avigad-Gerhardy-Towsner:2010, Kohlenbach-Lambov:2004, Kohlenbach:2005a,Kohlenbach:2008}.
For a more up-to-date survey on metastability rates obtained by proof mining, see Kohlenbach's lecture at the 2018 International Congress of Mathematicians~\cite[pp.~68--69]{Kohlenbach:2018}.

A connection between uniform metastable convergence and model-theoretic compactness was first exposed by Avigad and Iovino by using ultraproducts~\cite{Avigad-Iovino:2013}. 
After this, Dueñez and Iovino proved a metatheorem called the Uniform Metastability Principle~(\cite{Duenez-Iovino:2017}, Proposition~2.4), which roughly states the following: 

\begin{quote}
If a classical statement about convergence in metric structures is refined to a statement about  metastable convergence with some uniform rate, and this latter refinement can be expressed in the language of continuous first-order logic, then the validity of the original statement implies the validity of its uniformly metastable version.
\end{quote}

The operative word above is \emph{uniformly}: 
The striking fact about the Uniform Metastability Principle is that it allows one to convert a theorem about simple convergence into a stronger theorem about uniformly metastable convergence automatically, provided that in the statement of the theorem, one replaces convergence by the mathematically equivalent notion of metastability.
Thus, for instance, Tao's uniformly metastable dominated convergence theorem follows from the classical dominated convergence theorem as a particular application of this metatheorem.
Also, as Tao pointed out, his proposed abstract version of Walsh's ergodic theorem~\cite{Tao:Metastability} follows from the original version~\cite{Walsh:2012}.
(Tao cited the aforementioned Avigad-Iovino paper~\cite{Tao:Metastability,Avigad-Iovino:2013}.)

It is natural to ask if the Uniform Metastability Principle holds with logics more expressive than continuous first-order.
The more expressive the logic, the more powerful the metatheorem.
On the other hand, the proof of the Uniform Metastability Principle uses the fact that continuous first-order logic is compact,
and there is a delicate balance between compactness of a logic and its expressive power.

In this paper we show that the Uniform Metastability Principle is in fact equivalent to compactness for logics.
More precisely, we prove the following theorem: 

\begin{theorem*}
\label{T:metastability characterization intro} 
Let $\sL$ be a logic for metric structures. Then $\sL$ is compact if and only if every theory of convergence  is a theory of uniformly metastable convergence.
\end{theorem*}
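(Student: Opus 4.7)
For the forward direction (compactness implies UMP) the plan is to adapt the standard argument that yields the Uniform Metastability Principle for continuous first-order logic. Suppose $\sL$ is compact and that $T$ is an $\sL$-theory, in a signature containing distinguished symbols for a sequence $(x_n)_{n \in \NN}$, such that the sequence converges in every model of $T$. If $T$ were not a theory of uniformly metastable convergence, there would exist $\varepsilon > 0$ and $F \colon \NN \to \NN$ such that, for every $N$, some $\cM_N \models T$ violates the $(\varepsilon, F)$-metastability condition at rate $N$; concretely, for each $k \leq N$ there are indices $i, j \in [k, F(k)]$ with $d(x_i, x_j) > \varepsilon$ in $\cM_N$. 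Each such existence claim is expressible as an $\sL$-sentence $\psi_k$ that mentions only the finitely many terms $x_k, \ldots, x_{F(k)}$. The extended theory $T \cup \{\psi_k : k \in \NN\}$ is finitely satisfiable (witnessed by the $\cM_N$), so compactness of $\sL$ produces a model $\cM^{*}$ of $T$ in which $(x_n)$ fails to be Cauchy, contradicting the choice of $T$.

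For the reverse direction my plan is to argue the contrapositive. Suppose $\sL$ is not compact, and fix a finitely satisfiable but unsatisfiable $\sL$-theory $\Sigma = \{\sigma_n : n \in \NN\}$; by passing to finite conjunctions I may assume $\sigma_{n+1}$ implies $\sigma_n$. I will expand the signature by a constant $c$ and a distinguished sequence $(x_m)_{m \in \NN}$, fix parameters $\varepsilon = 1$ and $F(k) = k + 1$, and let $\phi_n$ be the $\sL$-sentence stating that $(\varepsilon, F)$-metastability fails at rate $n$. I will then form
\[
T' = \{\sigma_n \to \phi_n : n \in \NN\} \cup \Theta,
\]
where $\Theta$ is a set of $\sL$-sentences forcing $(x_m)$ to be Cauchy. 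In any $\cM \models T'$, unsatisfiability of $\Sigma$ yields a least $n_0$ with $\cM \not\models \sigma_{n_0}$; only the finitely many $\phi_n$ with $n < n_0$ are actually imposed, and $\Theta$ secures convergence, so $T'$ is indeed a theory of convergence. To show that $T'$ is not uniformly metastable, for each $N$ I will use finite satisfiability to produce $\cM' \models \sigma_0 \wedge \cdots \wedge \sigma_N$ and expand $\cM'$ by a sequence that oscillates with amplitude $> \varepsilon$ across each interval $[k, F(k)]$ with $k \leq N$ and is eventually constantly equal to $c$; this yields $\cM_N \models T'$ in which $(\varepsilon, F)$-metastability fails at rate $N$.

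The hard part will be ensuring that the construction genuinely lives inside $\sL$: specifically, that the conditional sentences $\sigma_n \to \phi_n$ and the Cauchy scheme $\Theta$ are $\sL$-expressible. In continuous first-order logic this is routine (implications become continuous combinations of $[0,1]$-valued sentences, and the Cauchy condition becomes a standard scheme over the index sort $\NN$), but in the abstract setting covered by the theorem one will need the paper's running conventions on ``logic for metric structures'' to supply enough closure properties to carry out these encodings. A secondary point to verify is that expanding a model of $\sigma_0 \wedge \cdots \wedge \sigma_N$ by an additional sequence does not disturb satisfaction of the $\sigma_n$, which is a harmless conservativity issue since the $\sigma_n$ live in the original signature.
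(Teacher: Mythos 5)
Your forward direction is essentially correct: it is the classical syntactic compactness argument (finitely satisfiable family of sentences $\psi_k$ asserting $\varepsilon$-oscillation on $[k,F(k)]$, hence a model of $T$ with a non-Cauchy sequence), modulo the routine replacement of the strict inequality $d(x_i,x_j)>\varepsilon$ by $d(x_i,x_j)\ge\varepsilon'$ for a rational $\varepsilon'<\varepsilon$, which is expressible via the Pavelka constants. The paper instead factors this through a purely topological Uniform Metastability Principle (Proposition~\ref{P: Ptwise Cauchy and unif metastability}) applied to the logic topology on $\Mod_{\sL}(T)$, which also yields the pointed version and the case of nets over arbitrary directed sets; but your route is a legitimate alternative for $\omega$-sequences.

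The reverse direction has a genuine gap, and it sits exactly at the point you deferred: the set $\Theta$ ``forcing $(x_m)$ to be Cauchy.'' The Cauchy property of an externally indexed sequence of distinguished constants is a $\forall\exists\forall$ condition over the index set $\NN$; a theory is only an (external) conjunction of sentences, so no set of $\sL$-sentences can express ``there exists $N$ such that\dots'' about the indices. This is not a technicality that closure properties of the logic can absorb: if such a $\Theta$ existed and still admitted arbitrarily slowly stabilizing sequences, it would itself be a theory of convergence that is not uniformly metastable, contradicting the forward direction in any compact logic (in particular your parenthetical claim that this is ``routine in continuous first-order logic'' cannot be right, since continuous first-order logic is compact). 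Note also that your implications $\sigma_n\to\phi_n$ only impose oscillation on an initial segment of the sequence and say nothing about its tail, so \emph{all} of the Cauchyness must come from $\Theta$; once you have a $\Theta$ whose models realize exactly the eventually-constant binary sequences, the $\Sigma$-machinery is superfluous, because that family is already Cauchy but not uniformly metastable (Remarks~\ref{rem:metastability}(5)).

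Constructing such a $\Theta$ from the mere failure of compactness is the actual content of the implication, and it is where the paper's Section~\ref{sec:k-l-cpct-cofinal} enters: Theorem~\ref{thm:k-k-cpct-cof} (a metric Makowsky--Shelah argument using relativization, whence the regularity hypothesis) produces a satisfiable uniform theory all of whose models have the constants $(\overline{\alpha})_{\alpha<\mu}$ \emph{cofinal} in a discrete linear order $(P,\lhd)$; the axioms~\eqref{eq:axiom1}--\eqref{eq:axiom4} then assert, by a single quantification internal to the model, that $f$ is $\{0,1\}$-valued and eventually zero along $(P,\lhd)$, and cofinality transfers this to the external sequence $(c_\alpha)$. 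Your proposal has no mechanism playing the role of this cofinality step. Two further points: ``not compact'' means not $[\infty,\omega]$-compact, so the witnessing unsatisfiable family may be uncountable and the counterexample must then live on $\mu$-sequences for some regular $\mu>\omega$ (as in the paper's reduction to $[\mu,\mu]$-compactness); and in a $[0,1]$-valued logic, $\cM\not\models\sigma_{n_0}$ only means $\sigma_{n_0}^{\cM}<1$, so the Łukasiewicz implication $\sigma_{n_0}\to\phi_{n_0}$ still forces $\phi_{n_0}^{\cM}\ge\sigma_{n_0}^{\cM}$ and does not simply switch off.
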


Our main results, Theorems~\ref{thm:cpct-metastab} and~\ref{thm:cpct-metastab-count}, establish a fine
correspondence between the  many forms of compactness
arising in logic (for theories or families of theories --- see~\ref{S:compactness} of the Preliminaries section) and natural forms of the Uniform Metastability Principle (for sequences, for nets, etc.) 
Among the many forms of compactness that are studied in logic, the strongest is compactness for arbitrary theories, while the weakest is countable compactness for theories (i.e., any
countable finitely satisfiable theory is satisfiable).
At the full compactness end, the correspondence with uniform metastable convergence takes the form quoted above; at the countable compactness end, it takes the following form:

\begin{theorem*}
\label{T:metastability characterization countable intro} 
Let $\sL$ be a logic for metric structures. Then $\sL$ is countably compact if and only if every  countable theory of convergence for sequences expressible in $\sL$  is a theory of uniformly metastable convergence.
\end{theorem*}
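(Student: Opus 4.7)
My plan is to adapt the argument for the full-compactness version (Theorem~\ref{thm:cpct-metastab}) to the countable setting, exploiting the fact that both the theory and the sequence indices are now countable, so that only countable compactness is invoked.

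For the forward direction, assume $\sL$ is countably compact and let $T$ be a countable $\sL$-theory of convergence for sequences. Suppose, toward contradiction, that $T$ is not a theory of uniformly metastable convergence. Then there exist $\varepsilon>0$ and a rate function $F\colon\NN\to\NN$ such that for every $N\in\NN$ some model $\cM_N\models T$ contains a sequence witnessing a failure of $(\varepsilon,F)$-metastability above index $N$. I would expand the signature by a symbol for a single distinguished sequence $(x_n)_{n\in\NN}$ and form a set $\Sigma$ consisting of $T$ together with, for each $N\in\NN$, a sentence asserting that $(x_n)$ fails $(\varepsilon,F)$-metastability above $N$. Since each $\cM_N$ (with its witnessing sequence interpreted as $(x_n)$) satisfies every finite fragment, and $\Sigma$ is countable, countable compactness yields a model of $\Sigma$. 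In that model the distinguished sequence is not Cauchy, contradicting the assumption that $T$ is a theory of convergence.

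For the converse, suppose every countable theory of convergence for sequences is a theory of uniformly metastable convergence, and let $T_0=\{\varphi_n:n\in\NN\}$ be a countable finitely satisfiable $\sL$-theory. The strategy is to cook up a countable theory of convergence $T$ in an expanded signature (with a distinguished sequence $(x_n)$) whose intended models encode partial realizations of $T_0$ in a way that ties Cauchyness of $(x_n)$ to satisfiability of $T_0$. The hypothesis then yields a uniform metastability rate for $(x_n)$ across all models of $T$; unpacking this rate in a suitable model built from the finite satisfiability of $T_0$ gives, along a single $(\varepsilon,F)$-metastable block, a coherent realization of arbitrarily large finite fragments of $T_0$, from which one extracts a model of $T_0$ itself.

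The principal obstacle is in the converse direction: arranging the encoding so that (i) the statement ``$(x_n)$ is eventually a witness for every $\varphi_n$'' is expressible in the ambient logic $\sL$ about which we know essentially nothing beyond the metastability hypothesis, and (ii) uniform metastability of $(x_n)$ is genuinely strong enough to produce a model of the whole $T_0$. Working with sequences rather than nets is exactly what makes a countable encoding suffice; this is the precise mirror of the way the full theorem requires nets together with full compactness.
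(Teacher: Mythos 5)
Your forward direction is essentially sound: it is a syntactic rendering of the paper's argument (which runs through the compactness of the space $\Mod_{\sL}(T)$ in a suitable topology and Proposition~\ref{P: Ptwise Cauchy and unif metastability}), and the finite-intersection/compactness step you describe does only need a countable, uniform theory, so $(\omega,\omega)$-compactness suffices. Two small repairs are needed there: the ``distinguished sequence $(x_n)$'' is superfluous, since a theory of convergence in the sense of Definition~\ref{D:convergence modulo:T} already comes with a designated sequence of $L$-terms or $L$-sentences, and your sentences $\sigma_N$ must assert the closed condition ``$\ge r$'' for a rational Pavelka constant $r<\varepsilon$ rather than a strict inequality, so that the limit model still contradicts Cauchyness.

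The converse direction, however, is a genuine gap, and you have in effect conceded it: what you offer is a plan to encode partial realizations of an arbitrary finitely satisfiable countable theory $T_0$ into a ``theory of convergence'' and then extract a model of $T_0$ from a uniform metastability rate, with the encoding itself left open. That encoding is precisely the hard content, and your direct route faces a structural problem: to apply the hypothesis you must first know that your auxiliary theory $T$ \emph{is} a theory of convergence (the designated sequence Cauchy in \emph{every} model), and it is unclear how unsatisfiability of $T_0$ could ever be detected from a metastability rate for a sequence that is Cauchy regardless. The paper instead argues by contraposition: if $\sL$ is not countably compact, Theorem~\ref{thm:weak-k-k-cpct-cof} (a Makowsky--Shelah-type characterization, whose proof is where the regularity of $\sL$, i.e.\ relativization to definable families of predicates as in Definition~\ref{def:relativns}, is indispensable) produces a countable satisfiable theory $T$ in all of whose models the constants $(\overline{n})_{n<\omega}$ are cofinal in a discrete linear order $(P,\lhd)$; adding a $\{\overline{0},\overline{1}\}$-valued function $f$ and constants $c_n$ with $c_n = f(\overline{n})$, eventually zero along $P$, one arranges that the sequences $\cb^{\cM}$ realized in models of the extended theory are exactly the eventually-zero binary sequences --- the family $C$ of Remarks~\ref{rem:metastability}, which is Cauchy but admits no uniform metastability rate. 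Your proposal contains neither the contrapositive set-up, nor the cofinality/relativization machinery, nor any concrete Cauchy-but-not-uniformly-metastable family tied to the failure of compactness, so the right-to-left implication of the statement remains unproved. Note also that this is exactly the direction for which the paper needs the regularity hypothesis on $\sL$, an assumption your proposal never engages.
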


These results provide mathematicians with a ``black box'' to convert theorems about convergence into theorems about uniform (metastable) convergence:
If a convergence theorem can be written in a logic that is compact, then its uniform metastable version is automatically true; if not, then the automatic conversion is impossible. 
A given theorem in analysis may not be expressible in first-order logic, which is a fully compact logic, but the natural framework for the theorem may be a stronger logic that admits a weaker degree of compactness, say, countable compactness.

We deal with metric structures, but even in the discrete case, i.e., for two-valued logics in discrete structures, the information given by these results appears to be new.

There are many examples of  countably compact logics (typically, extensions of first-order by generalized quantifiers --- see Examples~\ref{E:compact logics}).
Therefore, the forward implications of Theorem~\ref{T:metastability characterization intro} give a vast generalization of the Uniform Metastability Principle, and they extend the scope of this metatheorem to contexts where proof-theoretic methods may be unavailable.

We obtain the forward implications from purely topological considerations (see Section~\ref{sec:metastability}-\ref{SS:UMP topological}, where we give a topological version of the Uniform Metastability Principle).
To prove the reverse implications, we extend to the setting of metric structures a characterization of $[\kappa,\kappa]$-compactness originally proved by Makowsky and Shelah~\cite{Makowsky-Shelah:1979}, and we adapt it to characterize $(\kappa,\kappa)$-compactness.

In last section of the paper, we prove new characterizations of countable compactness for families of theories, and from this we obtain additional information about the Uniform Metastability Principle.
 To state the main result of this section, we need to recall some terminology from model theory:
If $\sL$ is a logic, a structure is $M$ is said to be \RPCd\ in $\sL$ if $M$ can be characterized, up to isomorphism, by a theory in $\sL$, possibly with the aid of additional functions and relations.
(See Definitions~\ref{D:PC} and~\ref{D:RPC}.) 

We prove the following result: 

\begin{theorem*}[Proposition~\ref{C:noncpct-rpc-omega} and Theorem~\ref{T:rpcdelta-characterizability of structures}]
\label{T:characterization compactness intro}
If ${\sL}$ is a logic for metric structures, then the following conditions are equivalent:
\begin{enumerate}%[\normalfont (1)]
\item ${\sL}$ is not countably compact for families of theories.
\item The structure $(\omega,<)$ is \RPCd\ in ${\sL}$.
\item Any metric structure of cardinality less than the first measurable cardinal is \RPCd\ in $\sL$.
\end{enumerate}
\end{theorem*}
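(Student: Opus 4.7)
The plan is to close the cycle $(3) \Rightarrow (2) \Rightarrow (1) \Rightarrow (2) \Rightarrow (3)$, with only the last arrow requiring substantial work. Immediately, $(3) \Rightarrow (2)$ holds because the first measurable cardinal $\mu$ is inaccessible and in particular uncountable, so $(\omega,<)$ lies in the range guaranteed by (3). For $(2) \Rightarrow (1)$, I would fix an \RPCd-characterization of $(\omega, <)$ given by a theory $T$ in some expanded language $\sL^+$, so that every $\sL$-reduct of a model of $T$ is isomorphic to $(\omega,<)$. The standard integers are $T$-definable as iterated successors of the least element, so adjoining a fresh constant $c$ and forming the countable family $\{T_n\}_{n<\omega}$, with $T_n$ asserting additionally that $c$ is not equal to any of the first $n$ standard elements, yields a family that is finitely satisfiable but simultaneously unsatisfiable. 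This witnesses the failure of countable compactness for families in $\sL^+$, and hence in $\sL$.

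For $(1) \Rightarrow (2)$, I would start with a countable family $\{T_n : n < \omega\}$ of $\sL$-theories that is finitely satisfiable but not simultaneously satisfiable. Replacing $T_n$ by $T_0 \cup \dots \cup T_n$ makes the family increasing. Introduce a fresh sort $N$ with a binary relation $<_N$ intended to be interpreted as $(\omega,<)$, together with an auxiliary indexing that attaches to each $a \in N$ a definable substructure witnessing $T_n$ for every $n$ at or below the position of $a$. The axioms force $<_N$ to be a linear order with least element and successors, and force each attached substructure to satisfy the appropriate theory. Finite satisfiability of $\{T_n\}$ produces arbitrarily long initial segments of $N$, while non-satisfiability of $\bigcup_n T_n$ rules out non-standard elements, since such an element would provide a common model of all $T_n$. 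The $\sL$-reduct of any model is then $(\omega,<)$; this is a metric-structure adaptation of the classical Lindström--Makowsky--Shelah argument.

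The main obstacle is $(2) \Rightarrow (3)$. Given a metric structure $M$ of cardinality $\kappa < \mu$, one must produce a theory $T_M$ in an expansion of $\sL$ whose $\sL$-reducts are exactly the structures isomorphic to $M$. The \RPCd-characterization of $(\omega,<)$ first upgrades to a characterization of well-foundedness of definable binary relations, because any ill-founded relation admits an $\omega$-indexed descending chain that the characterization detects. Iterating, one characterizes every hereditarily countable structure, and then, by induction on the cumulative hierarchy $H(\lambda^+)$ for $\lambda < \mu$, one characterizes structures of progressively larger cardinalities by coding them as well-founded trees built from previously-characterized pieces. The bound $\kappa < \mu$ enters at every limit step: below $\mu$ every countably complete ultrafilter on a set of cardinality at most $\kappa$ is principal, which rules out non-standard models of $T_M$ that might otherwise arise through a non-principal ultrafilter collapsing part of $M$. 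The chief technical task is to organize this transfinite induction uniformly within the expanded logic and to verify that the measurability barrier is exactly where the construction breaks down.
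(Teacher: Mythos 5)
Your handling of the easier implications is essentially sound: (3)$\Rightarrow$(2) is trivial; your (2)$\Rightarrow$(1) is a correct direct argument (it is the contrapositive of the paper's Proposition~\ref{prop:cpct-implies-nonrpc}, with the caveat that $[\omega,\omega]$-compactness here quantifies only over \emph{uniform} families, so the witnessing family must be arranged to carry a common modulus); and your sketch of (1)$\Rightarrow$(2) is the same Makowsky--Shelah-style construction as the paper's Theorem~\ref{thm:k-k-cpct-cof} combined with Proposition~\ref{prop:nonrpc-implies-cpct}. Note, however, that ``attaching to each element a definable substructure satisfying the appropriate $T_n$'' is precisely relativization to a definable family of predicates, i.e., the regularity hypothesis on ${\sL}$, which your sketch uses without naming it.

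The genuine gap is (2)$\Rightarrow$(3), which is the real content of the theorem, and the route you propose fails at its first step. An \RPCd\ characterization of $(\omega,<)$ lets you capture \emph{ill}-foundedness of a definable relation (project out an $\omega$-indexed descending chain), but not well-foundedness: \RPCd\ classes are not closed under complementation, and ``there is no descending $\omega$-chain'' is the negation of a projective condition, so the claimed upgrade to a characterization of well-foundedness does not follow. Even granting it, the transfinite tree-coding induction on the sets $H(\lambda^{+})$ is only a plan (you defer the ``chief technical task'' yourself), and it never engages the metric issues: an \RPCd\ characterization of a metric structure must pin it down up to isometry, which requires controlling density and completeness, not just a discrete isomorphism type. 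The paper's proof of Theorem~\ref{T:rpcdelta-characterizability of structures} is entirely different: given $\cM$ of cardinality $\mu$ below the first measurable, it forms the \emph{full} expansion $\cK$ of the disjoint union of $\cM$, a discrete copy of $\mu$, and a model $\cN$ of the theory characterizing $(\omega,<)$, naming every uniformly continuous function and predicate (in particular a surjection $f:\mu\to M$ and the characteristic functions of the three parts). In any ${\sL}$-elementary extension $\cK^{*}$ the $\omega$-like part cannot grow; Lemma~\ref{L:nonprincipal ultrafilter} (a new element of a subset in an elementary extension of a full discrete structure yields a nonprincipal ultrafilter, which would be $\card(Q)$-complete if some infinite subset $Q$ failed to grow) then forces $\mu^{*}=\mu$ precisely because $\mu$ is below the first measurable; hence $f(\mu)=M$ is dense in $M^{*}$, and completeness of $\cM$ gives $M^{*}=M$. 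The complete ${\sL}$-theory of $\cK$ with constants for all its elements, relativized to $\chi_{M}$, is the desired \RPCd\ characterization. This fullness-plus-ultrafilter mechanism is where the measurability bound genuinely enters, and it is the idea missing from your proposal.
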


The equivalence between (1) and~(2) generalizes facts known for two-valued logics. 
However, the equivalence between (1) and~(3) gives us new insight on the concept of logical compactness.
This equivalence shows that logics that are not countably compact have great expressive power:
In universes where measurable cardinals do not exist, a non countably compact logic can characterize any metric structure. 

This, together with the main result, gives us the following new dichotomy for logics: 

\begin{corollary*}
If $\sL$ is a logic for metric structures, then one and only one of the following condition holds:
\begin{enumerate}%[\normalfont (1)]
\item
The Uniform Metastability Principle holds for sequences in $\sL$.
\item
The structure $(\omega,<)$ (equivalently, any structure of power less than the first measurable cardinal) can be characterized by a theory in $\sL$, with the aid of additional functions and relations.
\end{enumerate}
\end{corollary*}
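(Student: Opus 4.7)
The plan is to derive this dichotomy as an immediate synthesis of the two preceding theorems. The countable version of the main theorem on metastability characterization states that the Uniform Metastability Principle for sequences in $\sL$ holds if and only if $\sL$ is countably compact, while the just-established characterization theorem states that $(\omega,<)$, or equivalently any metric structure of cardinality below the first measurable cardinal, is \RPCd\ in $\sL$ if and only if $\sL$ fails to be countably compact for families of theories. Chaining these two equivalences ought to yield the stated dichotomy.

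The one nontrivial step is to bridge the two flavors of countable compactness that appear in these theorems: countable compactness for single theories (appearing in the metastability equivalence) and countable compactness for families of theories (appearing in the \RPCd\ characterization). I would verify their equivalence at the countable level by a standard coding argument: a countable family of countable theories over a common vocabulary can be merged into a single countable theory by tagging each member with a fresh predicate symbol (or relativizing to a fresh sort), so that satisfiability of the combined theory is equivalent to simultaneous satisfiability of the original family in the required sense. Such a reduction should already be either stated or readily derivable from the Preliminaries section on compactness, where the hierarchy of compactness notions is laid out.

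Once the two compactness notions are identified, the dichotomy is immediate: condition~(1) corresponds to countable compactness of $\sL$, while condition~(2) corresponds to its negation, so exactly one must hold. The parenthetical equivalence inside condition~(2) — characterizability of $(\omega,<)$ versus characterizability of any structure of cardinality less than the first measurable cardinal — is precisely the content of the cited characterization theorem, so nothing further is needed. The only real obstacle in the argument is the bridging step between the two countable compactness notions; I expect this to be routine given how the definitions are set up in the paper, but it is the point where care must be taken to ensure the logical framework permits the coding without altering the class of sentences available to express convergence.
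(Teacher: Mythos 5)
There is a genuine gap, and it sits exactly at the step you flagged as "the only real obstacle." The two notions you want to identify --- countable compactness for a single (countable) theory, i.e.\ $(\omega,\omega)$-compactness, and countable compactness for families of theories, i.e.\ $[\omega,\omega]$-compactness --- are \emph{not} equivalent, and the paper itself records a counterexample: $\sL_{\omega\omega}(\exists^{\ge\aleph_1})$ is $(\omega,\omega)$-compact but not $[\omega,\omega]$-compact (Examples~\ref{E:compact logics}(1)). Your proposed coding argument fails because in the definition of $[\omega,\omega]$-compactness only the \emph{family} is required to be countable; the individual theories $T_\alpha$ may be uncountable (e.g.\ complete theories of structures), so tagging each member with a fresh predicate still leaves you with an uncountable union, out of reach of $(\omega,\omega)$-compactness. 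The hedge in Remark~\ref{rem:contrast-countable-compactnesses} --- that the two notions agree ``if we consider only theories of cardinality at most $\kappa$'' --- is precisely the restriction you cannot impose here.

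The repair is that no bridge is needed, because you invoked the wrong half of the main theorem. ``The Uniform Metastability Principle holds for sequences in $\sL$'' is defined (just before Theorem~\ref{T:descent from nets to sequences}) as clause~(3) of Theorem~\ref{thm:cpct-metastab} for $\DD=(\omega,<)$, with \emph{no} cardinality restriction on the theory $T$; by Theorem~\ref{thm:cpct-metastab} with $\kappa=\omega$ this is equivalent to $[\omega,\omega]$-compactness, not to $(\omega,\omega)$-compactness. Corollary~\ref{C:noncpct-rpc-omega} says $[\omega,\omega]$-compactness is equivalent to $(\omega,<)$ not being \RPCd, and Theorem~\ref{T:rpcdelta-characterizability of structures} (with Proposition~\ref{prop:cpct-implies-nonrpc}) supplies the parenthetical equivalence in condition~(2). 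Both sides of the dichotomy thus refer to the same notion, $[\omega,\omega]$-compactness, and exactly one of (1), (2) holds. Had condition~(1) instead meant the countable-theory UMP of Theorem~\ref{thm:cpct-metastab-count}, the dichotomy would actually be false for $\sL_{\omega\omega}(\exists^{\ge\aleph_1})$, so the distinction is not cosmetic.
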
 

We use this dichotomy to prove that if $\sL$ is a logic, $\kappa$  is an infinite cardinal less than the first measurable cardinal, and the Uniform Metastability Principle holds  in $\sL$ for $\kappa$-sequences, then it holds in $\sL$ for sequences (see Theorem~\ref{T:descent from nets to sequences} and Remark~\ref{nets-large-to-small}).

We do not presuppose expertise in mathematical logic from the reader.
However, we assume familiarity with the concept of \emph{structure}, as is defined in any model theory textbook, and the concepts of \emph{language} or \emph{vocabulary} of a structure.

The authors are grateful to Clovis Hamel, Ulrich Kohlenbach, and Frank Tall for invaluable comments on earlier versions of the manuscript.

\section{Preliminaries}
\label{S:prelim}

\subsection{$[0,1]$-valued structures and $[0,1]$-valued logics} 
\label{subsection:RealValuedLogics}

The formal definition of model-theoretic logic was given by P.~Lindström in his celebrated 1969 paper~\cite{Lindstrom:1969}.  
We start by recalling Lindström's classical definition.

\begin{definition} \label{Definition:LogicalSystem}
A Lindström \emph{logic} ${\sL}$ is a triple $(\mathscr{C}, \Sent_{{\sL}}, \models_{{\sL}})$, where $\mathscr{C}$ is a class of first-order structures that is closed under isomorphisms, renamings and reducts, $\Sent_{{\sL}}$ is a function that assigns to every first-order vocabulary $L$ a set $\Sent_{{\sL}}(L)$ called the set of \emph{$L$-sentences of ${\sL}$}\index{logic!sentence|ii}, and $\models_{{\sL}}$ is a binary relation between structures and sentences, such that the following conditions hold:
\begin{enumerate}
\item If $L \subseteq L'$, then $\Sent_{{\sL}}(L) \subseteq \Sent_{{\sL}}(L')$.

\item If $\mathcal {\cM} \models_{{\sL}} \varphi$ (i.e., if $\mathcal {\cM}$ and $\varphi$ are related under $\models_{{\sL}}$), then there is a vocabulary $L$ such that $\mathcal {\cM}$ is an $L$-structure  in $\mathscr{C}$ and $\varphi$ an $L$-sentence.
If ${\cM} \models_{{\sL}} \varphi$, we say that ${\cM}$ \emph{satisfies} $\varphi$, or that ${\cM}$ is a \emph{model} of $\varphi$.

\item \emph{Isomorphism Property}\index{logic!Isomorphism Property|ii}.  
If ${\cM},\mathcal{N}$ are isomorphic structures  in $\mathscr{C}$, then ${\cM} \models_{{\sL}} \varphi$ if and only if $\mathcal{N} \models_{{\sL}} \varphi$.

\item \emph{Reduct Property}\index{logic!Reduct Property|ii}.  
If $L \subseteq L'$, $\varphi$ is an $L$-sentence, and ${\cM}$ an $L'$-structure  in $\mathscr{C}$, then
\[
	{\cM} \models_{{\sL}} \varphi \qquad \text{if and only if} \qquad ({\cM} \upharpoonright L) \models_{{\sL}} \varphi.
\]

\item \emph{Renaming Property}\index{logic!Renaming Property|ii}.  
If $\rho: L \to L'$ is a renaming (i.e., a bijection $r\colon L\to L'$ that respects symbol type and and arity),  then for each $L$-sentence $\varphi$ there is an $L'$-sentence $\varphi^{\rho}$ such that ${\cM} \models_{{\sL}} \varphi$ if and only if ${\cM}^{\rho} \models_{{\sL}} \varphi^{\rho}$ for every $L$-structure ${\cM}$  in $\mathscr{C}$.  
(Here, ${\cM}^{\rho}$ denotes the structure that results from converting ${\cM}$ into an $L'$-structure through $\rho$.)
\end{enumerate}
\end{definition}

A classical first-order structure $\cM$ consists of a nonempty universe $M$ together with finitary functions and relations (or ``predicates") on $M$.
If $n$ is a nonnegative integer, any $n$-ary relation on $M$ can be seen as a function of $M^n$ into $\{0,1\}$.
In this paper we will deal with the more general concept of \emph{$[0,1]$-valued structure}, which is defined as follows:
A $[0,1]$-valued structure $\cM$ consists of a nonempty set $M$ called the \emph{universe} of $\cM$, together with finitary functions and predicates on $M$; but in this case, the predicates are $[0,1]$-valued, rather than $\{0,1\}$-valued.
A simple example of $[0,1]$-valued structure is a pseudometric space $(M,d)$ of diameter bounded by 1.
The universe of the structure is $M$ and the only predicate of the structure is $d$.

The following extension of Definition~\ref{Definition:LogicalSystem} was introduced by Caicedo and Iovino~\cite{Caicedo-Iovino:2014}:

\begin{definition}
\label{Definition:RealValuedLogic}
A \emph{$[0,1]$-valued logic} is a triple ${\sL} = (\mathscr{C}, \Sent_{{\sL}},\mathcal{V})$, 
where $\mathscr{C}$ is a class of $[0,1]$-valued structures that is closed under under isomorphisms, renamings and reducts,  $\Sent_{{\sL}}$ is a function that assigns to every first-order vocabulary $L$ a set $\Sent_{{\sL}}(L)$ called the set  of \emph{$L$-sentences of ${\sL}$}\index{logic!sentence|ii}, and $\mathcal{V}$ is a real-valued partial function on $\mathscr{C}\times\Sent_{{\sL}}$ such that the following conditions hold: 
\begin{enumerate}

\item 
If $L \subseteq L'$, then $\Sent_{{\sL}}(L) \subseteq \Sent_{{\sL}}(L')$.
\item
For every~$L$, the function $\mathcal{V}$ assigns to every pair $({\cM},\varphi)$, where ${\cM}$ is an $L$-structure in $\mathscr{C}$ and $\varphi$ is an $L$-sentence of ${\sL}$, a real number $\mathcal{V}({\cM},\varphi) = \varphi^{\cM}\in[0,1]$ called the \emph{truth value} of $\varphi$ in ${\cM}$.

\item \emph{Isomorphism Property for $[0,1]$-valued logics}.  
If ${\cM}, \mathcal{N}$ are isomorphic structures  in $\mathscr{C}$ and $\varphi$ is an $L$-sentence of ${\sL}$, then $\varphi^{\cM}=\varphi^\mathcal{N}$.

\item \emph{Reduct Property  for $[0,1]$-valued logics.}  
If $L \subseteq L'$,  $\varphi$ is an $L$-sentence of ${\sL}$, and ${\cM}$ an $L'$-structure in $\mathscr{C}$, then $\varphi^{\cM} =\varphi^{{\cM} \upharpoonright L}$.

\item \emph{Renaming Property  for $[0,1]$-valued logics.} If $\rho: L \to L'$ is a renaming, then for each $L$-sentence $\varphi$ of ${\sL}$ there is an $L'$-sentence $\varphi^{\rho}$ such that $\varphi^{\cM}=(\varphi^\rho)^{{\cM}^{\rho}}$ for every $L$-structure ${\cM}$  in $\mathscr{C}$. 
\end{enumerate}
If ${\sL}$ is a $[0,1]$-valued logic, $L$ is a vocabulary $\varphi$ is an $L$-sentence of ${\sL}$ and  ${\cM}$ is an $L$-structure in $\sC$ such that $\varphi^{\cM}=1$, we say that ${\cM}$ \emph{satisfies} $\varphi$, or that ${\cM}$ is a \emph{model} of $\varphi$, and write ${\cM} \models_{{\sL}} \varphi$.

If ${\sL}$ is a $[0,1]$-valued logic such that $\varphi^{\cM}\in\{0,1\}$ for every sentence $\varphi$ of ${\sL}$ and every structure ${\cM}$, we say that ${\sL}$ is a \emph{two-valued} logic, or a \emph{discrete} logic.
\end{definition}

\begin{definition} \label{Definitions:TheoryModelsatisfiable}
Let ${\sL}$ be a a $[0,1]$-valued logic and let $L$ be a vocabulary.
\begin{enumerate}
\item An \emph{$L$-theory} (or simply a \emph{theory} if the vocabulary is given by the context) of ${\sL}$ is a set of $L$-sentences of ${\sL}$.

\item Let $T$ be an $L$-theory of ${\sL}$.  
If ${\cM}$ is an $L$-structure such that ${\cM} \models_{{\sL}} \varphi$ for each $\varphi\in T$, we say that ${\cM}$ is a \emph{model} of $T$ and write ${\cM} \models_{{\sL}} T$.

\item A theory is \emph{satisfiable} if it has a model. 

\item If ${\cM}$ is structure of ${\sL}$, the \emph{complete ${\sL}$-theory of ${\cM}$}, denoted $\thryL({\cM})$, is the set $\{\,\varphi :{\cM} \models_{{\sL}} \varphi\,\}$.

\end{enumerate}
\end{definition}

If $L$ is a vocabulary, $\bar{x} = x_{1},\dots,x_{n}$ is a finite list of constant symbols not in $L$, and $\varphi$ is an $(L \cup \{\bar{x}\})$-sentence, we emphasize this by writing $\varphi$ as $\varphi(\bar{x})$.  
In this case we may say that $\varphi(\bar{x})$ is an \emph{$L$-formula}.  
If ${\cM}$ is an $L$-structure and $\bar{a} = a_{1},\dots,a_{n}$ is a list of elements of ${\cM}$, we write
\[
	({\cM},a_{1},\dots,a_{n}) \models_{{\sL}} \varphi(x_{1},\dots,x_{n}),
\]
or ${\cM} \models_{{\sL}} \varphi[\bar{a}]$, if the $L \cup \{\bar{x}\}$ expansion of ${\cM}$ that results from interpreting $x_{i}$ as $ a_{i}$ (for $i = 1,\dots,n$) satisfies $\varphi(\bar{x})$.

\begin{definition} \label{Definition:LogicalEquivalence}
Let ${\cM},\mathcal{N}$ be $L$-structures.  
We say that ${\cM}$ and $\mathcal{N}$ are \emph{equivalent in ${\sL}$}\index{structure!equivalent|ii}, and write ${\cM} \equiv_{{\sL}} \mathcal{N}$, if for every $L$-sentence $\varphi$ we have $\phi^{\cM}=\phi^{\cN}$.
\end{definition}

If ${\cM}$ is an $L$-structure and $A$ is a subset of the universe of~${\cM}$, we denote by $L[A]$ the expansion of the vocabulary $L$ obtained by adding distinct new constant symbols~$c_a$, one for each $a\in A$. 
We also denote by $({\cM},a)_{a\in A}$ the expansion of ${\cM}$ to an $L[A]$-structure obtained by interpreting each $c_a$ as~$a$.  
The structure  $({\cM},a)_{a\in A}$ is said to be an \emph{expansion of ${\cM}$ by constants}.

\begin{definition} \label{Definition:ElementarySubstructure}
Let ${\sL}$ be a $[0,1]$-valued logic and let ${\cM},\mathcal{N}$ be $L$-structures with ${\cM}$ a substructure of $\mathcal{N}$.  
We say that ${\cM}$ is an \emph{${\sL}$-substructure}\index{structure!elementary substructure|ii} of $\mathcal{N}$ or that $\mathcal{N}$ is an \emph{${\sL}$-extension} of ${\cM}$, and we write ${\cM} \preceq_{{\sL}} \mathcal{N}$, if  $({\cM},a)_{a\in {\cM}}\equiv_{{\sL}} (\mathcal{N},a)_{a\in {\cM}}$.
\end{definition}

\begin{convention}
In order to avoid clutter in the notation, if a logic ${\sL}$ is fixed, we may suppress the subindex in symbols like $\equiv_{\mathscr L}$, $\prec_{\mathscr L}$,  $\thryL(\cdot)$, etc.  
\end{convention}

\subsection{Connectives and classical quantifiers} 
\label{subsection:Connectives}

\begin{definition} \label{Definition:LukasiewiczImplication}
The \emph{Łukasiewicz implication} is the function $\to$ from $[0,1]^2$ into $[0,1]$ defined by
\[
x\to y= \min\{1-x+y,1\}\quad\text{for all $(x,y)\in[0,1]^2$.}
\]
\end{definition}

Note that $x\to y$ has the value~$1$ if and only if $x\le y$.

\begin{definition}
\label{D:basic connectives}
We will say that a $[0,1]$-valued logic ${\sL}$ is \emph{closed under the basic connectives} if the following conditions hold for every vocabulary $L$:
\begin{enumerate}
\item If $\varphi,\psi \in \Sent_{{\sL}}(L)$, then there exists a sentence $\varphi\to\psi$ in $\Sent_{{\sL}}(L)$ such that $(\varphi\to\psi)^{\cM} = \varphi^{\cM}\to\psi^{\cM}$ for every $L$-structure ${\cM}$.
\item For each rational $r\in[0,1]$, the set $\Sent_{{\sL}}(L)$ contains a sentence with constant truth value~$r$.  
  These sentences are called the \emph{Pavelka constants} of ${\sL}$.%
\footnote{The Pavelka constants are not needed (see~\cite{Caicedo:2017}), but they simplify the exposition.}
\end{enumerate}
\end{definition}

\begin{notation}
If ${\sL}$ is a $[0,1]$-valued logic that is closed under the basic connectives, $\varphi$ is a sentence of ${\sL}$, and $r$ is a Pavelka constant of ${\sL}$, we will write $\varphi\le r$, $\varphi\ge r$ and $\varphi = r$ as abbreviations, respectively, of $\varphi\to r$, $r\to \varphi$ and $(\varphi\to r) \wedge (r\to\varphi)$. 
\end{notation}

\begin{remark}
\label{R:reduction of [0,1]-valued to 2-valued}
Let ${\sL}$ be a $[0,1]$-valued logic and let $L$ be a vocabulary.  
If ${\cM}$ is an $L$-structure of ${\sL}$, $\varphi$ is an $L$-sentence of ${\sL}$, and $r$ is a Pavelka constant of ${\sL}$, then  ${\cM}\models_{{\sL}}\varphi\le r$ if and only if $\varphi^{\cM}\le r$, and ${\cM}\models_{{\sL}}\varphi\ge r$ if and only if $\varphi^{\cM}\ge r$;
thus, the truth value $\varphi^{\cM}$ is determined by either of the sets
\[
\{\,r\in\mathbb{Q}\cap[0,1] : {\cM}\models_{{\sL}}\varphi\le r\,\},\qquad
\{\,r\in\mathbb{Q}\cap[0,1] : {\cM}\models_{{\sL}}\varphi\ge r\,\}.
\]
\end{remark}

\begin{notation}
\label{N:connectives}
If ${\sL}$ is a $[0,1]$-valued logic that is closed under the basic connectives and $\varphi,\psi$ are sentences of ${\sL}$, we write $\neg\varphi$ and $\varphi\lor\psi$ as abbreviations, respectively, of $\varphi\to 0$ and $(\varphi\to\psi)\to\psi$, and $\varphi\land\psi$ as an abbreviation of $\neg(\neg\varphi\lor\neg\psi)$.
\end{notation}

Note that  for every $L$-structure ${\cM}$, one has
\begin{align*}
(\varphi\le 0)^{\cM} &= 1 - \varphi^{\cM},\\
(\varphi\lor\psi)^{\cM} &= \max\{\varphi^{\cM},\psi^{\cM}\},\\
(\varphi\land\psi)^{\cM} &= \min\{\varphi^{\cM},\psi^{\cM}\}.
\end{align*}
In particular, every $[0,1]$-valued logic that is closed under the basic connectives is closed under conjunctions and disjunctions. 
On the other hand, $\cM\models\varphi$ implies $\cM\not\models\neg\varphi$, but not conversely.
We call $\neg\varphi$ the \emph{Łukasiewicz negation} or \emph{weak negation} of~$\varphi$.

We will refer to any function from $[0,1]^n$ into $[0,1]$, where $n$ is a nonnegative integer, as an $n$-ary \emph{connective}.  
The Łukasiewicz implication and the Pavelka constants are continuous connectives, as are all the projections $(x_1,\dots, x_n)\mapsto x_i$.  
The following proposition states that any other other continuous connective can be approximated by finite combinations of these.

\begin{proposition}\label{P:connectives approximation}
Let $\mathscr{C}$ be the class of connectives generated by composing the Łukasiewicz implication, the Pavelka constants, and the projections.  
Then every continuous connective is a uniform limit of connectives in $\mathscr{C}$. 
\end{proposition}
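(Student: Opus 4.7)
The plan is to apply the lattice form of the Stone--Weierstrass theorem (Kakutani--Stone) to $\mathscr{C}$ viewed, for each arity $n$, as a subset of the space $C([0,1]^n,[0,1])$ of continuous functions with the uniform norm.

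\emph{Step 1 (lattice closure).} First I would verify that $\mathscr{C}$ is a sublattice of $C([0,1]^n,[0,1])$ containing every rational constant in $[0,1]$. From the Łukasiewicz implication $x\to y = \min(1-x+y,1)$ and the Pavelka constants, one derives in $\mathscr{C}$ the negation $\neg x = x\to 0 = 1-x$; the operations $x\lor y = (x\to y)\to y = \max(x,y)$ and $x\land y = \neg(\neg x\lor\neg y) = \min(x,y)$ already introduced in Notation~\ref{N:connectives}; the truncated sum $x\oplus y := \neg x\to y = \min(x+y,1)$; and the truncated difference $x\ominus y := \neg(x\to y) = \max(x-y,0)$. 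In particular $\mathscr{C}$ is closed under $\lor,\land$ and contains all rational constants.

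\emph{Step 2 (strong separation).} For any two distinct points $\mathbf{x},\mathbf{y}\in[0,1]^n$ and any two target values $a,b\in[0,1]$ I would exhibit an $f\in\mathscr{C}$ with $f(\mathbf{x})=a$ and $f(\mathbf{y})=b$. Since $\mathbf{x}\neq\mathbf{y}$, some coordinate $i$ satisfies $x_i\neq y_i$; composing the projection $\pi_i$ with iterated $\oplus$ and $\ominus$ against Pavelka constants (which yields rational scaling and translation, subject to the clips at $0$ and $1$) produces a rational piecewise-linear function of $x_i$ alone taking any pair of rational target values at the two given arguments. Density of $\mathbb{Q}\cap[0,1]$ in $[0,1]$ extends this to arbitrary $a,b\in[0,1]$ for the ensuing uniform-approximation statement.

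\emph{Step 3 (conclusion).} Steps~1 and~2 let me invoke the Kakutani--Stone theorem: any sublattice of $C(X,\mathbb{R})$ for compact Hausdorff $X$ that contains the constants and separates every pair of distinct points with every pair of prescribed values is uniformly dense in $C(X,\mathbb{R})$. Applied to $X=[0,1]^n$, the range constraint $[0,1]$ is preserved since truncation via $0\lor(\cdot)\land 1$ is already available in $\mathscr{C}$.

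The main obstacle is Step~2: checking that truncated Łukasiewicz arithmetic suffices to produce the required interpolating piecewise-linear functions. An alternative that sidesteps the explicit construction is to invoke McNaughton's theorem, which identifies the closure of $\{\to,0,1\}$ under composition with the continuous piecewise-linear $[0,1]$-valued functions of integer coefficients; adjoining the rational Pavelka constants enlarges this class to piecewise-linear $[0,1]$-valued functions with rational coefficients, and uniform density of those in $C([0,1]^n,[0,1])$ is classical.
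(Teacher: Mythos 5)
Your overall route---the lattice (Kakutani--Stone) form of Stone--Weierstrass---is the same one the paper takes, which likewise cites the lattice Stone--Weierstrass theorem and then supplies the missing ingredient by showing that the dyadic scalings $x\mapsto rx$ are uniform \emph{limits} of connectives in $\mathscr{C}$. The genuine gap is in your Step~2, the step you yourself flag: iterating $\oplus$ and $\ominus$ against Pavelka constants does \emph{not} yield rational scaling of the variable. By the McNaughton-style description you invoke at the end, the connectives exactly representable from $\to$, the projections, and the rational constants are precisely the continuous piecewise-linear functions whose linear pieces have \emph{integer} coefficients in the variables and rational constant terms; adjoining the Pavelka constants makes the intercepts rational but does not enlarge the class to all rational-coefficient piecewise-linear maps. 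So both your parenthetical claim of ``rational scaling'' and your fallback statement of McNaughton's theorem with rational coefficients are incorrect as stated; for instance $x\mapsto\tfrac12 x$ is not in $\mathscr{C}$, which is exactly the delicate point the paper confronts by exhibiting $\tfrac12 x$ as an explicit uniform limit of lattice combinations.

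The gap is repairable without leaving your framework. If $\mathbf{x}\neq\mathbf{y}$ differ in coordinate $i$, say $s=x_i<y_i=t$, pick a rational $c\in(s,t)$ and an integer $n>1/(t-c)$, and set $g=\min\{n\max\{\pi_i-c,0\},1\}$, which lies in $\mathscr{C}$ because $\max\{x-c,0\}=\neg(x\to c)$ and the $n$-fold truncated sum is available; then $g(\mathbf{x})=0$, $g(\mathbf{y})=1$, and for rational targets $a,b$ the connective $f=(a\wedge\neg g)\vee(b\wedge g)$ satisfies $f(\mathbf{x})=a$, $f(\mathbf{y})=b$ exactly. This gives two-point \emph{approximation} of arbitrary real targets, so you must also replace the version of Kakutani--Stone you quote (exact interpolation of arbitrary real values, which $\mathscr{C}$ cannot provide) by the standard approximation form: if for every pair of points and every $\epsilon>0$ some member of the sublattice is within $\epsilon$ of the target function at both points, the target lies in the uniform closure. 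With those two corrections your argument closes, and it is then essentially a direct verification of the separation hypothesis, where the paper instead reduces to approximating the dyadic multiples $rx$.
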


\begin{proof}
Since $\mathscr{C}$ is closed under the connectives $\max\{x,y\}$ and $\min\{x,y\}$, by the Stone-Weierstrass theorem for lattices \cite[pp.~241-242]{Gillman-Jerison:1976}, we only need to show that the connectives $rx$, where $r$ is a dyadic rational, can be approximated by connectives in $\mathscr{C}$. 

Notice that if $x\in[0,1]$,
\[
\frac{1}{2}x=\lim_n \bigvee\limits_{i=1}^{n}\left(\frac{i}{n}\wedge \lnot \left(x\to \frac{i}{n}\right)\right).
\]
Hence, since the truncated sum $a\oplus b = \min(a+b,1) = \neg x\to y$ is in $\mathscr{C}$, so are all the connectives $\big(\frac{1}{2}x+\dots+\frac{1}{2^n}\big)x$, for any positive integer $n$.
\end{proof}

\begin{definition}\label{D:closed under classical quantifiers}
Let ${\sL}$  be a $[0,1]$-valued logic.  
We say that ${\sL}$ is \emph{closed under existential quantifiers} if given any $L$-formula $\varphi(x)$ there exists an $L$-formula $\exists x\varphi$ such that for every $L$-structure ${\cM}$ one has $(\exists x\varphi)^{\cM}=\sup_{a\in M}(\varphi[a]^{\cM})$.  
Similarly, we say that ${\sL}$ is \emph{closed under universal quantifiers} if given any $L$-formula $\varphi(x)$ there exists an $L$-formula $\forall x\varphi$ such that for every $L$-structure ${\cM}$ one has $(\forall x\varphi)^{\cM}=\inf_{a\in M}(\varphi[a]^{\cM})$.
\end{definition}

\subsection{Metric structures and logics for metric structures}
\label{S:Logics for metric structures}

\begin{definition}
A \emph{metric structure} is a $[0,1]$-valued structure $\cM$ such that one of the predicates of $\cM$ is a metric $d$ on the universe of $\cM$, and all the functions and predicates of $\cM$ are uniformly continuous with respect to $d$.
\end{definition}

Note that classical structures are metric structures; we regard them as being endowed with the discrete metric.
The predicate for this metric is $\neg(x=y)$.
For this reason, we refer to classical structures as \emph{discrete} structures.

\begin{definition}
\label{D:logic for metric structures}
A \emph{logic for metric structures} is a $[0,1]$-valued logic $\sL$ such that the structures of $\sL$ are metric structures and $\sL$ is closed under the basic connectives and the existential and universal quantifiers (see Definitions~\ref{D:basic connectives} and~\ref{D:closed under classical quantifiers}).
\end{definition}

\begin{remark}
  To any logic~$\sL$ for metric structures there corresponds a logic $\widetilde{\sL}$ for discrete structures, i.e., for models of the sentence
  \begin{equation*}
    \forall x\forall y (\dd(x,y)=0 \vee \dd(x,y)=1).
  \end{equation*}
  It follows trivially from the definition of logic for metric structures that $\widetilde{\sL}$ extends classical (discrete) first-order logic~$\sL_{\omega\omega}$.
\end{remark}

\subsection{Examples of logics for metric structures}
\label{S:basic examples}
 
In Subsection~\ref{S:compactness} we shall turn our attention to compactness. 
Examples (2)--(5) below are examples of compact logics.

1. \textbf{Two-valued logics}. Certainly, any Lindström logic that is closed under the Boolean connectives and classical quantifiers can be seen as a two-valued logic and hence as a logic for metric structures. 

2. \textbf{Basic continuous logic}.  
This logic, which we will temporarily denote as $\bcl$, is defined in the following manner.  
The class of structures of $\bcl$ is the class of all metric structures.  
The class of sentences of  $\bcl$ is defined as follows.  
For a vocabulary $L$, the concept of $L$-term is defined as in first-order logic.  
If $t(x_1,\dots,x_n)$ is an $L$-term (where $x_1,\dots, x_n$ are the variables that occur in $t$), ${\cM}$ is an $L$-structure, and $a_1,\dots,a_n$ are elements of the universe of ${\cM}$, the interpretation $t^{\cM}[a_1,\dots,a_n]$ is defined as in first-order logic as well.  
The atomic formulas of $L$ are all the expressions of the form $\dd(t_1,t_2)$ or $R(t_1,\dots,t_n)$, where  $R$ is an $n$-ary predicate symbol of $L$.  
If $\varphi(x_1,\dots,x_n)$ is an atomic $L$-formula with variables $x_1,\dots,x_n$ and $a_1,\dots,a_n$ are elements of an $L$-structure ${\cM}$, the interpretation $\varphi^{\cM}[a_1,\dots,a_n]$ is defined naturally by letting
\[
  R(t_1,\dots,t_n)^{\cM}[a_1,\dots,a_n] = R^{\cM}(t_1^{\cM}[a_1,\dots,a_n],\dots,t_n^{\cM}[a_1,\dots,a_n])
\]
and
\[
 \dd(t_1,t_2)^{\cM}[a_1,\dots,a_n] = d^{\cM}(t_1^{\cM}[a_1,\dots,a_n],t_2^{\cM}[a_1,\dots,a_n])
 \]
(where $d^{{\cM}}$ is the metric in~${\cM}$).
The $L$-formulas of $\bcl$ are the syntactic expressions that result from closing the atomic formulas of $L$ under the Łukasiewicz implication, the Pavelka constants, and the existential quantifier.
A \emph{sentence} of $\bcl$  is a formula without free variables, and the \emph{truth value} of a $L$-sentence $\varphi$ in an $L$-structure ${\cM}$ is $\varphi^{\cM}$.
We write ${\cM}\models_{\bcl}\varphi[a_1,\dots,a_n]$ if $\varphi[a_1,\dots,a_n]^{\cM}=1$. 

Recall that in any $[0,1]$-valued logic that is closed under the basic connectives,  the expressions $\neg\varphi$, $\varphi\lor\psi$, $\varphi\land\psi$, $\varphi\le r$, and $\varphi\ge r$ are written as abbreviations of $\varphi\to 0$, $(\varphi\to\psi)\to\psi$,  $\neg(\neg\varphi\lor\neg\psi)$, $\varphi\to r$, and $r\to\varphi$, respectively.  
In $\bcl$ we also regard $\forall x\varphi$ as an abbreviation of $\neg\exists x\neg\varphi$.

3. \textbf{Full continuous logic}.  
This logic, temporarily denoted~$\fcl$, is the same as~$\bcl$ above with the difference that, instead of taking the closure under the Łukasiewicz-Pavelka connectives, one takes the closure under all continuous connectives (and the existential quantifier). 

Proposition~\ref{P:connectives approximation} yields the following remark, which allows one to transfer model-theoretic results between $\bcl$ and $\fcl$: 
\begin{remark}
\label{R:formulas approximation}
For every $L$-formula $\varphi(\bar x)$ of $\fcl$ and for every $\epsilon>0$ there exists a formula~$\psi(\bar x)$ of $\bcl$ such that $|\varphi^{\cM}[\bar a]-\psi^{\cM}[\bar a]| \le \epsilon$ for every complete $L$-structure ${\cM}$ and every tuple $\bar a$ in the universe $M$ of~$\cM$ with $\ell(\bar a)=\ell(\bar x)$.  
It follows that if ${\cM},\mathcal{N}$ are $L$-structures, then ${\cM}\equiv_{\bcl} \mathcal{N}$ if and only if ${\cM}\equiv_{\fcl} \mathcal{N}$, and ${\cM}\preceq_{\bcl} \mathcal{N}$ if and only if ${\cM}\preceq_{\fcl} \mathcal{N}$.  
Moreover, every structure is equivalent in $\bcl$ (and $\fcl$) to its metric completion.
\end{remark}

4. \textbf{The continuous logic framework of Ben Yaacov and Usvyatsov}.  
This logic is the restriction of $\fcl$ to the class of complete metric structures;
it was introduced by Ben Yaacov and Usvyatsov~\cite{Ben-Yaacov-Usvyatsov:2010} as a reformulation of Henson's logic for metric spaces, based on the concept of continuous model theory developed by Chang and Keisler~\cite{Chang:1961,Chang-Keisler:1966}.

5. \textbf{Łukasiewicz-Pavelka logic}.  
The formulas of Łukasiewicz-Pavelka logic are like those of basic continuous logic, with the following difference:
in place of the distinguished metric~$d$, one uses the \emph{similarity} relation $x\approx y$.  
However, there is a precise correspondence between the two relations, namely, $\dd(x,y)$ is $1-(x\approx y)$ (in other words, the two relations are weak negations of each other)---see Section~5.6 of~\cite{Hajek:1998}, especially Example~5.6.3-(1).  
Also, in Łukasiewicz-Pavelka logic, for each $n$-ary operation symbol $f$, one has the axiom
\label{P:similarity axioms}
\[
(x_1\approx y_1\land\dots\land x_n\approx y_n) \to 
(f(x_1\dots,x_n)\ \approx f(y_1,\dots y_n)),
\]
and similarly, for each $n$-ary predicate symbol $R$, one has the axiom
\[
(x_1\approx y_1\land\dots\land x_n\approx y_n) \to 
(R(x_1\dots,x_n) \leftrightarrow R(y_1,\dots y_n)),
\]
where `$\varphi\leftrightarrow\psi$' abbreviates `$(\varphi\rightarrow\psi)\wedge(\psi\rightarrow\varphi)$'
(\cite{Hajek:1998}, Definition~5.6.5).
Thus,  Łukasiewicz-Pavelka logic is the restriction of basic continuous logic to the class of 1-Lipschitz structures, i.e., structures whose  operations and predicates are 1-Lipschitz.

Historically, Pavelka extended Łukasiewicz propositional logic by adding the rational constants, and proved a form of approximate completeness for  the resulting logic.  
See~\cite{Pavelka:1979I,Pavelka:1979II,Pavelka:1979III} (see also Section 5.4 of~\cite{Hajek:1998}.)
This is known as Pavelka-style completeness.  
Łukasiewicz-Pavelka logic  is also referred to in the literature as rational Pavelka logic, or Pavelka many-valued logic.  
Novák proved Pavelka-style completeness for predicate Łukasiewicz-Pavelka logic, which he calls ``first-order fuzzy logic", first using ultrafilters~\cite{Novak:1989,Novak:1990}, and later using a Henkin-type construction~\cite{Novak:1995}.  
Another proof of  Pavelka-style completeness for predicate Łukasiewicz-Pavelka logic was given by Hajek (\cite {Hajek:1997} and~\cite[Section 5.4]{Hajek:1998}).  

6. \textbf{Infinitary $[0,1]$-valued logics}.
Different $[0,1]$-valued logics with infinitary formulas have been been studied  by Ben Yaacov-Iovino~\cite{Ben-Yaacov-Iovino:2009}, Eagle~\cite{Eagle:2014,Eagle:2017}, Grinstead~\cite{Grinstead:2013}, Sequeira~\cite{Sequeira:2013}, and Caicedo~\cite{Caicedo:2017}.  
See~\cite{Caicedo:2017} and~\cite{Eagle:2017} for comparisons among these.

\begin{convention}
Throughout the rest of the paper, the symbol $\cl$ will denote any of the logics in Examples (1)--(5) above.
\end{convention}

\subsection{Relativizations}

The fact that a given predicate of a $[0,1]$-structure (including the metric of a metric structure) takes on values in $\{0,1\}$, can be expressed using only the connectives $\lor$ and $\neg$:

\begin{definition} \label{Definition:DiscretePredicate}
Let ${\sL}$ be a $[0,1]$-valued logic that is closed under the basic connectives and let ${\cM}$ be an $L$-structure of ${\sL}$.  
Let $P$ a predicate symbol of $L$ or the symbol denoting the metric. 
We define $\Discrete(P)$ to be the $L$-formula
\[
	\forall \bar{x}(P(\bar{x}) \vee \neg P(\bar{x})),
\]
and call $P^{{\cM}}$ \emph{discrete} if ${\cM} \models \Discrete \big(P)$.
\end{definition}

Let $L$ be a vocabulary and let $P(x)$ be a monadic predicate not in $L$.  
 If ${\cM}$ is an $(L\cup\{P\})$-structure with universe $M$ such that $P^{{\cM}}$ is discrete, and a valid $L$-structure of $\mathcal{L}$ is obtained by restricting the universe of ${\cM}$ to $\{a\in M: {\cM}\models_{\mathcal{L}}P[a]\}$, then we denote this structure by ${\cM}\upharpoonright \{x : P(x)\}$ or ${\cM}\restriction P$.
Note that if ${\cM}$ is complete, the continuity of $P$ ensures that ${\cM}\restriction P$, when defined, is complete.  

\begin{definition}\label{D:logic permits relativization single predicate}
 A $[0,1]$-valued logic ${\sL}$ \emph{permits relativization to discrete predicates} if for every vocabulary~$L$, every $L$-sentence $\varphi$, and every monadic predicate symbol $P$ not in $L$ there exists an $(L\cup\{P\})$-sentence, denoted $\varphi^P$ or $\varphi^{\{x: P(x)\}}$ and called the \emph{relativization} of $\varphi$ to $P$, such that the following holds:
 If ${\cM}$ is an $(L\cup\{P\})$-structure with universe $M$ such that $P^{{\cM}}$ is discrete, then
 \begin{itemize}
\item
${\cM}\upharpoonright \{x : P(x)\}$ is an $(L\cup\{P\})$-structure of ${\sL}$.
\item
For all $c\in M$,
\[
(\varphi^{P})^{\cM}[c]=
\varphi^{{\cM}\upharpoonright P}[c].
\]
\end{itemize}
 \end{definition}

As an example, if $\varphi$ is an formula of $\cl$,  the relativization of $\varphi$ to $P$ can be  defined by the following recursive rule:

\begin{itemize}
\item
If $\varphi$ is atomic, then $\varphi^{P}$ is $\varphi$.
\item
If $\varphi$ is of the form $C(\psi_1,\dots,\psi_n)$, where $C$ is a connective, then $\varphi^{P}$ is $C(\psi_1^{P},\dots,\psi_n^{P})$.
\item
If $\varphi$ is of the form $\exists y \psi$, then $\varphi^{P}$ is $\exists y(P(y) \land \psi^{P})$.
\item
If $\varphi$ is of the form $\forall y \psi$, then $\varphi^{P}$ is $\forall y(\neg P(y) \lor \psi^{P})$.
\end{itemize}

One may verify that all the basic examples of $[0,1]$-valued logics discussed in Subsection~\ref{S:basic examples} satisfy the following stronger property:

\begin{definition}\label{def:relativns}
 A $[0,1]$-valued logic ${\sL}$ \emph{permits relativization to definable families of predicates} if for every vocabulary $L$, every $L$-sentence~$\varphi$, every binary predicate symbol $R$ not in $L$,  and any variable $y$, there is an $(L\cup\{R\})$-formula $\psi(y)$, denoted $\varphi^{\{x: R(x,y)\}}$ or $\varphi^{R(\cdot,y)}$,
such that the following holds: 
Whenever ${\cM}$ is an $(L\cup\{R\})$-structure with universe $M$ such that for every $b\in M$,
\begin{itemize}
\item
Either ${\cM}\models R[a,b]$ or ${\cM}\models \neg R[a,b]$ for every $a\in M$ (i.e., the collection $\{R^{{\cM}}(\cdot,b) : b\in M\}$ consists of discrete predicates), and
\item
${\cM}\upharpoonright \{x : R(x,b)\}$ (also denoted ${\cM}\restriction R(\cdot,b)$) is defined as an $L$-structure of~${\sL}$, 
\end{itemize}
one has, 
 \[
(\varphi^{R(\cdot,b)})^{\cM} = \varphi^{{\cM}\upharpoonright R(\cdot,b)}
\qquad\text{for all $b\in {\cM}$.}
\]
Such formula $\varphi^{R(\cdot,y)}$ is a \emph{relativization of $\varphi$ by~$R(x,y)$ with parameter~$y$}.
\end{definition}

\begin{definition}
\label{D:regular-logic}
We will say that a logic for metric structures is \emph{regular} if it permits relativization to definable families of predicates.
\end{definition}

All the logics mentioned in Subsection~\ref{S:basic examples} and in Examples~\ref{E:compact logics} of the next section are regular.

\subsection{$[\kappa,\lambda]$-compactness and $(\kappa,\lambda)$-compactness}
\label{S:compactness}

Recall that if $(X,d)$ and $(Y,\rho)$ are pseudometric spaces and $F:X\to Y$ is uniformly continuous, a \emph{modulus of uniform continuity for $F$} is a function $\Delta:(0,\infty)\to[0,\infty)$ such that, for all $x,y\in B$ and $\epsilon > 0$,
\[
\dd(x,y) < \Delta(\epsilon) \quad\Rightarrow\quad \rho(F(x),F(y))\le\epsilon.
\]

\begin{definition}\label{def:kappa-lambda}
If ${\sL}$ is a logic for metric structures and $T$ is an ${\sL}$-theory, the class of models of $T$ will be denoted $\Mod_{\mathscr L}(T)$.  
An \emph{${\sL}$-elementary class} is a class of the form $\Mod_{\mathscr L}(\varphi)$, where $\varphi$ is a sentence.
\end{definition}

\begin{definition}
\label{D:uniform class}
Let $L$ be a vocabulary and let  $\mathscr{C}$  be a class of $L$-structures.  

We will say that $\mathscr{C}$  is a \emph{uniform class} if for every function symbol $f$ of $L$ there exists $\Delta_{f }:(0,\infty)\to[0,\infty)$ such that for every structure $\mathcal M$ of $\mathscr{C}$, the function $\Delta$ is a modulus of uniform continuity for $f^{{\cM}}$.  
The collection $(\Delta_f)_{f\in L}$ is called a \emph{modulus of uniform continuity for $\mathscr{C}$}.

If ${\sL}$ is a logic for metric structures and $T$ is an ${\sL}$-theory, we will say that $T$ is \emph{uniform} if $\Mod_{\mathscr L}(T)$ is a uniform class.  
We will say that a family $\mathcal{T}$ of $L$-theories is \emph{uniform} if there exists a common modulus of uniform continuity for $\Mod_{\mathscr L}(T)$ for all $T\in \mathcal{T}$.
\end{definition}

\begin{remark}
The definition of uniform class given above applies only to $[0,1]$-valued structures.  
For unbounded or non-uniformly bounded structures, a more general definition imposing local bounds in addition to local moduli of continuity is needed~\cite{Duenez-Iovino:2017}.
\end{remark}

\begin{definition}
\label{D:compactness}
Let ${\sL}$ be a logic for metric structures and let $\kappa,\lambda$ be infinite cardinals with $\lambda\le\kappa\le \infty$.  
\begin{enumerate}
\item
We will say that ${\sL}$ is \emph{$[\kappa,\lambda]$-compact} if the following holds: 
Whenever $L$ is a vocabulary and $\mathcal{T}$ is a uniformly continuous family of $L$-theories of ${\sL}$ of cardinality at most~$\kappa$, the union $\bigcup \mathcal{T}$ is satisfiable if $\bigcup \mathcal{T}_0$ is satisfiable for every subfamily $\mathcal{T}_0\subseteq \mathcal{T}$ of cardinality strictly less than~$\lambda$.  
We will say that $\mathscr L$ is \emph{compact} if and only if $\sL$ is $[\infty,\omega]$-compact, i.e., $[\kappa,\omega]$-compact for every $\kappa$.
\item 
We will say that ${\sL}$ is \emph{$(\kappa,\lambda)$-compact} if the following holds: 
Whenever $L$ is a vocabulary and $T$ is a uniform $L$-theory of ${\sL}$ of cardinality at most~$\kappa$,
we have that $T$ is satisfiable if every subtheory of $T$ of cardinality strictly less than~$\lambda$ is satisfiable. 
\end{enumerate}
\end{definition}

\begin{remark}\label{rem:contrast-countable-compactnesses}
Clearly, $[\kappa,\lambda]$-compactness is stronger than $(\kappa,\lambda)$-compactness. 
However, the two properties become equivalent if, in the definition of $[\kappa,\lambda]$-compactness, we consider only theories of cardinality at most~$\kappa$.
Also, both are equivalent when $\kappa=\infty$.
\end{remark}

\begin{examples}
\label{E:compact logics}
Let $\sL_{\omega\omega}$ be first-order logic.
Given a quantifier $Q$, we denote by $\sL_{\omega\omega}(Q)$ the extension of  $\sL_{\omega\omega}$ by the quantifier $Q$.
\begin{enumerate}
\item
If $\kappa$ is an infinite cardinal and $\exists^{\ge\kappa}$ is the quantifier that says ``there exist $\kappa$-many'', then 
$\sL_{\omega\omega}(\exists^{\ge\kappa})$ is  $[\omega,\omega]$-compact for $\kappa = (2^\omega)^+$, 
and in general $[\delta,\delta]$-compact for any $\kappa$ of the form $(\lambda^{\delta})^+$~\cite{Fuhrken:1965}.
In particular, first-order logic extended with the quantifier  ``there exist at most continuum many'' is $[\omega,\omega]$-compact.
The logic $\sL_{\omega\omega}(\exists^{\ge\aleph_1})$ (i.e., first-order extended with the quantifier ``there exist uncountably many'') 
is known for its good behavior~\cite{Keisler:1970,Kaufmann:1985}.
This logic is $(\omega, \omega)$-compact~\cite{Vaught:1964} but not $[\omega, \omega]$-compact~\cite{Caicedo:1999}.
It is consistent to assume that $\sL_{\omega\omega}(\exists^{\ge\kappa})$ is $(\omega,\omega)$-compact for all $\kappa>\omega$. See~\cite{Shelah-Vaananen} for a detailed account.

\item
\emph{Stationary logic} is  the extension of first-order with the second-order quantifier that  says ``for almost all countable sets'' (more precisely, for a close unbounded family of subsets of the universe).
This logic is $(\omega,\omega)$-compact.
Stationary logic was introduced by Shelah~\cite{Shelah:1971c,Shelah:1972}, investigated in detail by Barwise-Kaufmann-Makkai~\cite{Barwise-Kaufmann-Makkai:1978} and further by Mekler-Shelah~\cite{Mekler-Shelah:1985,Mekler-Shelah:1986}).
\item
Shelah's cofinality quantifier $Q^{\rm cof}_{\omega}$ such that $Q^{\rm cof}_{\omega}x,y\,\varphi(x,y)$ says ``$\varphi(x,y)$ defines a linear order of cofinality~$\omega$'' gives a fully compact proper extension of first-order logic~\cite{Shelah:1971c}.
This logic is a sublogic of stationary logic~\cite[Lemma 4.4]{Shelah:1985}.
\item
Other compact extensions of first-order by second order quantifiers have been studied by Shelah~\cite{Shelah:1975,Shelah:Compact} and Mekler-Shelah~\cite{Mekler-Shelah:1993}.
\item
The infinitary logic $\sL_{\kappa\kappa}$ is $(\kappa,\kappa)$-compact if and only if $\kappa$ is weakly compact, and it is $(\infty,\kappa)$-compact if and only if $\kappa$ is strongly compact.
\end{enumerate}
\end{examples}

\begin{definition}
\label{D:compactness-top}
Let $X$ be a topological space.  
If  $\kappa,\lambda$ are infinite cardinals with $\lambda\le\kappa$, a topological space is said to be \emph{$[\kappa,\lambda]$-compact} if whenever $\mathcal F$ is a family of at most $\kappa$ closed sets such that the intersection of any subfamily of $\mathcal F$ of cardinality less than $\lambda$ has nonempty intersection we must have $\bigcap \mathcal{F}\neq \emptyset$.%
\footnote{This concept was introduced in topology by Alexandroff and Urysohn~\cite{Alexandroff-Urysohn:1929}.  
 Smirnov took up its systematic study and considered variations of the definition~\cite{Smirnov:1950,Smirnov:1951}, as done later by Gál~\cite{Gal:1957,Gal:1958} and Noble~\cite{Noble:1971}.
For a comprehensive introduction (including detailed comparisons among the variants introduced by Smirnov, Gaal, and Noble), see Vaughan's paper~\cite{Vaughan:1975}.}
Note that a topological space $X$ is compact if and only if $X$ is $[\infty,\omega]$-compact. 
\end{definition}

\begin{remarks}
\label{R:countable compactness of logics}
If $L$ is a vocabulary, the class of $L$-structures of ${\sL}$ can be regarded as a topological space naturally by letting the classes of the form $\Mod_{\mathscr L}(T)$, where $T$ is an $L$-theory, be the closed classes of the topology (in other words, elementary ${\sL}$-classes are the basic closed sets).  

Clearly, ${\sL}$ is $[\kappa,\lambda]$-compact if and only if the class of $L$-structures of~$\sL$ is $[\kappa,\lambda]$-compact for every vocabulary~$L$.
In particular, ${\sL}$ is $[\omega,\omega]$-compact if and only if, for every vocabulary $L$, the class of $L$-structures of ${\sL}$ is countably compact.
It is easy to verify that if $\lambda<\kappa$, then $[\kappa,\lambda]$-compactness is equivalent to $[\delta,\delta]$-compactness for all regular $\delta$ such that $\lambda\le\delta\le\kappa$ (see~\cite{Vaughan:1975}).

For general compact extensions of $\sL_{\omega\omega}$ from a topological viewpoint see~\cite{Caicedo:1999,Caicedo:1993}.

\end{remarks}

\begin{remark}
The nomenclature for square-bracket compactness is not unified in logic and topology.
 The term ``countable compactness'' corresponds to $[\omega,\omega]$-compactness in topology and to $(\omega,\omega)$-compactness in logic.
 Also, $[\lambda,\kappa]$-compactness in topology, corresponds to $[\kappa,\lambda]$-compactness in logic.
 For the rest of this paper, we will adhere to the usage within logic.
\end{remark}

\section{$[\kappa,\kappa]$-compactness and cofinality}
\label{sec:k-l-cpct-cofinal}

Recall that a logic for metric structures is \emph{regular} if it permits relativization to definable families of predicates (see Definitions~\ref{def:relativns} and~\ref{D:regular-logic}).

The following theorem is a version for metric structures of a theorem of Makowsky and Shelah~\cite{Makowsky-Shelah:1979}.

\begin{theorem}
\label{thm:k-k-cpct-cof}

Let $\sL$ be a logic for metric structures and let $\kappa$ be a regular cardinal.
Then (1) below implies (2).
If $\sL$ is regular, (2) implies (1).
\begin{enumerate}%[\normalfont(1)]
\item ${\sL}$ is $[\kappa,\kappa]$-compact.
\item If $L$ is a vocabulary containing a monadic predicate symbol~$P$, a binary predicate symbol~$\lhd$, and a family $(\overline{\alpha}:\alpha< \kappa)$ of constant symbols, then every satisfiable uniform theory of~$\sL$ extending the theory~$T_{\kappa}$ consisting of the sentences
\begin{itemize}
\item $\Discrete(P)$, $\Discrete(\lhd)$, plus
\item expressing that $\lhd$ is a linear order on the truth set of~$P$, and
\item $P(\overline{\alpha})$, $\overline{\alpha}\lhd \overline{\beta}$, for $\alpha<\beta<\kappa$,
\end{itemize}
necessarily has a model~$\cM$ such that $(\overline{\alpha}^{\cM}:\alpha<\kappa)$ is not cofinal in~$(P^{\cM},\lhd^{\cM})$.
\end{enumerate}
\end{theorem}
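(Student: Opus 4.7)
The implication $(1)\Rightarrow(2)$ is the easier direction. Given a satisfiable uniform theory $T$ extending $T_\kappa$, I would expand the vocabulary by a fresh constant $c$ and consider the family
\[
\mathcal{F} = \bigl\{\,T\cup \{P(c)\}\,\bigr\} \cup \bigl\{\,T\cup\{\overline{\alpha}\lhd c\} : \alpha<\kappa\,\bigr\}
\]
of $\kappa$-many $(L\cup\{c\})$-theories, each extending $T$ and thus sharing its modulus of continuity. Because $\kappa$ is regular, any subfamily $\mathcal{F}_0\subseteq\mathcal{F}$ of cardinality less than $\kappa$ involves only a bounded set of indices $\alpha$, so fixing any $\cN\models T$ and interpreting $c$ as $\overline{\delta}^{\cN}$ for $\delta$ past that bound shows $\bigcup\mathcal{F}_0$ is satisfiable. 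By $[\kappa,\kappa]$-compactness, $\bigcup\mathcal{F}$ has a model $\cM$, and $c^{\cM}\in P^{\cM}$ strictly dominates every $\overline{\alpha}^{\cM}$ under $\lhd^{\cM}$, as required.

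For $(2)\Rightarrow(1)$, with $\sL$ regular, I would show that any family $\mathcal{T}=\{T_\alpha:\alpha<\kappa\}$ of uniform theories whose every $<\kappa$-subunion is satisfiable has satisfiable total union. After replacing $T_\alpha$ by $\bigcup_{\beta\le\alpha}T_\beta$, we may assume $\mathcal{T}$ is $\subseteq$-increasing, so each $T_\alpha$ is itself satisfiable, say by $\cN_\alpha$. Introduce the expanded vocabulary $L^{*}=L\cup\{P,\lhd,R\}\cup\{\overline{\alpha}:\alpha<\kappa\}$ with $R$ a fresh binary predicate, and let $T^{*}$ be the union of (i) the sentences of $T_\kappa$ together with $\Discrete(R)$ and axioms asserting that, for each $y\in P$, the truth set of $R(\cdot,y)$ is nonempty and closed under every function symbol of $L$; and (ii) for each $\alpha<\kappa$ and each $\varphi\in T_\alpha$, the sentence
\[
\forall y\bigl((P(y)\wedge \overline{\alpha}\lhd y)\to \varphi^{R(\cdot,y)}\bigr),
\]
where $\varphi^{R(\cdot,y)}$ is the relativization supplied by regularity (Definition~\ref{def:relativns}). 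This $T^{*}$ extends $T_\kappa$ and inherits the common modulus of continuity of $\mathcal{T}$.

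To realize $T^{*}$ as satisfiable, I would build a structure $\cM$ whose universe is the disjoint union $K\sqcup\bigsqcup_{\alpha<\kappa}N_\alpha$, where $K=\{k_\alpha:\alpha<\kappa\}$ is a set of markers and the $N_\alpha$ are pairwise disjoint copies of the universes of the $\cN_\alpha$. Set $\overline{\alpha}^{\cM}=k_\alpha$, $P^{\cM}=K$, $\lhd^{\cM}$ the natural order on $K$, and $R^{\cM}(x,y)=1$ iff either $y=k_\alpha$ with $x\in N_\alpha$, or $y\notin K$ with $x\in N_0$ (a default choice ensuring $R(\cdot,y)$ always names a legitimate substructure). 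Each function symbol $f\in L$ is interpreted as $f^{\cN_\alpha}$ on tuples lying within a single $N_\alpha$ and as a fixed reference point $n_0\in N_0$ otherwise; predicates of $L$ are treated analogously. Equipping $\cM$ with a metric that agrees with the $\cN_\alpha$-metric on each $N_\alpha$ and assigns distance $1$ between distinct pieces ensures that every tuple of diameter less than $1$ sits in a single piece, so $\cM$ respects the common uniform modulus of $\mathcal{T}$. By construction, $\cM\restriction R(\cdot,k_\alpha)\cong\cN_\alpha\models T_\alpha$ for every $\alpha$, and hence $\cM\models T^{*}$.

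Applying (2) to $T^{*}$ then yields a model $\cM^{*}$ with an element $b\in P^{\cM^{*}}$ satisfying $\overline{\alpha}^{\cM^{*}}\lhd^{\cM^{*}}b$ for every $\alpha<\kappa$. For each $\alpha$ and each $\varphi\in T_\alpha$, the relativization axioms of $T^{*}$ force $\cM^{*}\restriction R(\cdot,b)\models\varphi$, so this substructure models $\bigcup\mathcal{T}$, establishing $[\kappa,\kappa]$-compactness. The main obstacle will be the construction of $\cM$: one must simultaneously ensure that each $R^{\cM}(\cdot,y)$ is a valid $L$-structure of $\sL$ (so the relativized formulas carry their intended semantics per Definition~\ref{def:relativns}), that $R$ and every symbol of $L$ respect the single given modulus of continuity, and that the ad hoc extensions to mixed and marker tuples do not spoil uniform continuity. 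Separating the pieces at distance $1$ is the device that simultaneously resolves all three obstructions.
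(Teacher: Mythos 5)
Your proposal is correct and takes essentially the same approach as the paper: the forward direction uses the same device of a fresh constant $c$ forced above all $\overline{\alpha}$ via a $\kappa$-indexed uniform family of theories, and the converse uses the same key construction, namely the theory of relativized sentences $\forall y\bigl((P(y)\wedge\overline{\alpha}\lhd y)\to\varphi^{R(\cdot,y)}\bigr)$ for $\varphi\in T_\alpha$ (after making the chain increasing) realized by the same disjoint-union model with pieces separated at distance~$1$. The only differences are presentational: you argue $(2)\Rightarrow(1)$ directly rather than by contraposition, and you add explicit axioms ensuring each $R(\cdot,y)$ carves out a nonempty, function-closed substructure — a point the paper leaves implicit.
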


\begin{proof}
$(1)\Rightarrow(2)$:\quad 
Let $\sL$ be $[\kappa,\kappa]$-compact and let $T$ be a satisfiable uniform theory extending~$T_{\kappa}$.
 For $\delta<\kappa$ and a new constant~$c$, the theory $T_{\delta} = T \cup \{P(c)\} \cup \{\overline{\alpha} < c : \alpha<\delta\}$ has a model (e.g., the expansion of a model $\cM$ of~$T_{\kappa}$ obtained upon interpreting $c$ by $\overline{\delta}$). 
By the hypothesis of $[\kappa,\kappa]$-compactness, $\bigcup_{\delta<\kappa}T_{\delta}$ is satisfiable and thus has a model that evidently satisfies the requirements.

$(2)\Rightarrow(1)$:\quad
Assume that $\kappa$ is regular and ${\sL}$ is not $[\kappa,\kappa]$-compact.
Fix a uniform family $\mathcal{T} = \{T_\alpha\}_{\alpha<\kappa}$ of $\kappa$ many $L$-theories of ${\sL}$ such that $\bigcup \mathcal{T}'$ is satisfiable for every subfamily $\mathcal{T}'\subseteq\mathcal{T}$ having strictly fewer than $\kappa$ elements, but $\bigcup \mathcal{T}$ is not satisfiable.  
Without loss of generality, we can assume $T_\alpha\subseteq T_\beta$ for $\alpha<\beta<\kappa$.  
We can also assume that every $T_\alpha$ contains sentences specifying the uniform continuity modulus for $\mathcal{T}$ (which is common to all $T_{\alpha}$).
Let $L'$ extend $L$ with new symbols $P$, $\lhd$, and $(\overline{\alpha}:\alpha<\kappa)$ per the hypotheses of~(2), plus a binary predicate symbol~$R$.
Let $T = T_{\kappa} \cup \{\Discrete(R)\} \cup \bigl\{\forall y\bigl(P(y)\wedge \overline{\alpha}\lhd y\rightarrow \varphi^{R(\cdot,y)}\bigr) : \varphi\in T_{\alpha}, \alpha<\kappa\bigr\}$.

We construct a model of~$T$ as follows.
For each $\alpha<\kappa$, let $\mathcal M_\alpha$ be a model of~$T_\alpha$.  
Consider the structure $(\kappa,<)$ as a discrete linear order.  
Let $\cM$ be the $L'$-structure such that:
\begin{itemize}
\item The universe $M$ of $\mathcal M$ is the disjoint union $\kappa \sqcup \bigsqcup_{\alpha<\kappa}M_\alpha$.
\item $\overline{\alpha}^{\mathcal M}=\alpha$ for each $\alpha<\kappa$.
\item The distance between elements of $\bigsqcup_{\alpha<\kappa}M_\alpha$ in the same $M_\alpha$ is as given by the metric of $M_\alpha$, and the distance between distinct elements of $\bigsqcup_{\alpha<\kappa}M_\alpha$ not in the same $M_\alpha$ is~$1$.
\item For any $\alpha<\kappa$, the distance between $\alpha$ and any other element of $M$ is 1.  
\item If $Q$ is an $n$-ary predicate symbol of $L$, the interpretation $Q^{\mathcal M}$ is $\bigsqcup_{\alpha<\kappa}Q^{\mathcal M_\alpha}$ in $\bigsqcup_{i<\kappa}M_\alpha^n$ and $0$ in $M^n\setminus \bigsqcup_{\alpha<\kappa}M_\alpha^n$.
\item If $f$ is an $n$-ary operation symbol of $L$, and $\bar a \in M^n$, then $f^{\mathcal M}(\bar a)$ is  $f^{\mathcal M_\alpha}(\bar a)$ if $a\in M_\alpha^n$ and $0$ (the least element of $\kappa$) if $\bar a\in M^n\setminus \bigsqcup_{\alpha<\kappa}M_\alpha^n$.
\item $P^{\cM}$ is the characteristic function of $\kappa$.
\item $\lhd^{\mathcal M}$ is the characteristic function of $\{\,(\alpha,\beta): \alpha<\beta<\kappa\,\}$.
\item $R^{\mathcal M}$ is the characteristic function of $\bigcup_{\alpha<\kappa}(M_\alpha\times\{\alpha\})$.
\end{itemize}

Note that $\mathcal M\upharpoonright R(\cdot,\alpha)\simeq \mathcal M_\alpha$ for each $\alpha<\kappa$;
thus, $\mathcal M\upharpoonright R(\cdot,\alpha)\models\varphi$ for $\varphi\in T_{\alpha}$.
By the hypothesis of regularity of ${\sL}$ (Definitions~\ref{def:relativns} and~\ref{D:regular-logic}), 
\[
\tag{*}
\mathcal M \models   \forall y \, \Bigl( \big(P(y) \land  \overline{\alpha}\!\lhd y\big) \rightarrow \varphi^{R(\cdot,y)} \Bigr),
\]
since the collection~$\{T_{\alpha}\}$ is an ascending chain.
This shows that $\cM$ is a model of~$T$, so $T$ is satisfiable.
Moreover, if $\cM'$ is any model of~$T$, then $\{\overline{\alpha}^{\cM'}\}_{\alpha<\kappa}$ is cofinal in $(P',<') = (P^{\cM'},\lhd^{\cM'})$: 
Otherwise, there would be $\mu\in P'$ with $\overline{\alpha}^{\cM'}\lhd'\! \mu$ for all $\alpha<\kappa$, hence
\[
\mathcal M' \upharpoonright R(\cdot,\mu)\models \textstyle\bigcup\mathcal{T},
\]
by~(*), contradicting the unsatisfiability of~$\bigcup \mathcal{T}$.
\end{proof}

\begin{theorem}
\label{thm:weak-k-k-cpct-cof}

Let $\sL$ be a logic for metric structures and let $\kappa$ be a regular cardinal.
Then (1) below implies (2).
If $\sL$ is regular, (2) implies (1).
\begin{enumerate}%[\normalfont(1)]
\item ${\sL}$ is $(\kappa,\kappa)$-compact.
\item 
As (2) of Theorem~\ref{thm:k-k-cpct-cof}, but stated for theories of~$\sL$ of cardinality~$\kappa$.
\end{enumerate}
\end{theorem}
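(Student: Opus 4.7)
The plan is to mimic the proof of Theorem~\ref{thm:k-k-cpct-cof} almost verbatim, with the key difference being careful attention to cardinalities throughout. The statements~(2) in both theorems are the same except for the cardinality bound on the theory, and $(\kappa,\kappa)$-compactness differs from $[\kappa,\kappa]$-compactness precisely in that restriction.

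For the forward direction $(1)\Rightarrow(2)$: Given a satisfiable uniform theory~$T$ of $\sL$ of cardinality~$\kappa$ extending~$T_{\kappa}$, I introduce a new constant symbol~$c$ and form the theory
\[
T^{*} = T \cup \{P(c)\} \cup \{\overline{\alpha}\lhd c : \alpha<\kappa\},
\]
which has cardinality~$\kappa$. By regularity of~$\kappa$, any subtheory of~$T^{*}$ of cardinality strictly less than~$\kappa$ mentions only constants $\overline{\alpha}$ with $\alpha<\delta$ for some $\delta<\kappa$; such a subtheory is satisfied by expanding any model of~$T$ by interpreting~$c$ as $\overline{\delta}^{\cM}$. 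Applying $(\kappa,\kappa)$-compactness yields a model of~$T^{*}$, in which the element~$c^{\cM}$ witnesses that $(\overline{\alpha}^{\cM}:\alpha<\kappa)$ is not cofinal in~$(P^{\cM},\lhd^{\cM})$.

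For the reverse direction $(2)\Rightarrow(1)$, assume that $\sL$ is regular and not $(\kappa,\kappa)$-compact. Fix a uniform $L$-theory~$T$ of cardinality at most~$\kappa$ (in fact equal to~$\kappa$, else the failure of compactness is immediate from $(\omega,\omega)$-compactness being trivial) such that every subtheory of cardinality $<\kappa$ is satisfiable but~$T$ itself is not. Using the regularity of~$\kappa$, write $T = \bigcup_{\alpha<\kappa} T_{\alpha}$ as an ascending chain with $|T_{\alpha}|<\kappa$ and with each $T_{\alpha}$ containing the sentences specifying the common uniform continuity modulus for the models of~$T$. Each~$T_{\alpha}$ is then satisfiable, so I may pick models $\cM_{\alpha}\models T_{\alpha}$, and I construct exactly the same amalgamated $L'$-structure~$\cM$ as in the proof of Theorem~\ref{thm:k-k-cpct-cof}, with universe $\kappa\sqcup\bigsqcup_{\alpha<\kappa}M_{\alpha}$, with $P^{\cM}$ the characteristic function of~$\kappa$, $\lhd^{\cM}$ the order of~$\kappa$, and $R^{\cM}$ the characteristic function of $\bigcup_{\alpha<\kappa}(M_{\alpha}\times\{\alpha\})$.

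Now form the theory
\[
\hat T = T_{\kappa} \cup \{\Discrete(R)\} \cup \Bigl\{\forall y\bigl((P(y)\wedge \overline{\alpha}\lhd y)\rightarrow \varphi^{R(\cdot,y)}\bigr) : \varphi\in T_{\alpha},\ \alpha<\kappa\Bigr\}.
\]
The crucial verification, and the main technical point of the argument, is the cardinality estimate: $|T_{\kappa}|=\kappa$, and since each $|T_{\alpha}|<\kappa$ while the chain has length~$\kappa$, the third piece has cardinality at most $\sum_{\alpha<\kappa}|T_{\alpha}|\le\kappa$, so $|\hat T|=\kappa$. By construction and regularity of~$\sL$ (applied exactly as in the proof of Theorem~\ref{thm:k-k-cpct-cof}), $\cM\models \hat T$, so $\hat T$ is satisfiable and uniform. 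Applying hypothesis~(2) to~$\hat T$ yields a model~$\cM'$ in which $(\overline{\alpha}^{\cM'}:\alpha<\kappa)$ is not cofinal in $(P^{\cM'},\lhd^{\cM'})$: there exists $\mu\in P^{\cM'}$ with $\overline{\alpha}^{\cM'}\lhd^{\cM'}\!\mu$ for every $\alpha<\kappa$. Relativizing via $R(\cdot,\mu)$ produces a model of $\bigcup_{\alpha<\kappa}T_{\alpha}=T$, contradicting the unsatisfiability of~$T$. The main obstacle is simply the cardinality count, which requires the regularity of~$\kappa$ to write~$T$ as an ascending $\kappa$-chain of pieces each smaller than~$\kappa$, thereby keeping~$\hat T$ within size~$\kappa$ so that hypothesis~(2) actually applies.
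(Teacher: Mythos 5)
Your proposal is correct and follows essentially the same route as the paper: the forward direction is the verbatim argument from Theorem~\ref{thm:k-k-cpct-cof}, and the reverse direction reuses its amalgamated structure while tracking cardinalities so the relativized theory stays of size~$\kappa$. The only (cosmetic) difference is that the paper enumerates the non-satisfiable theory as sentences $\{\varphi_\alpha\}_{\alpha<\kappa}$ and attaches $\varphi_\alpha$ to the index~$\alpha$, whereas you decompose it into an ascending $\kappa$-chain of subtheories of size $<\kappa$ and attach each $T_\alpha$ to~$\alpha$; both give a theory of cardinality~$\kappa$ and the same contradiction.
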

\begin{proof}
  $(1)\Rightarrow(2)$:\quad
  The corresponding part of the proof of Theorem~\ref{thm:k-k-cpct-cof} applies verbatim.

  $(2)\Rightarrow(1)$:\quad
  The proof of Theorem~\ref{thm:k-k-cpct-cof} is adapted as follows.
  The failure of $(\kappa,\kappa)$-compactness is witnessed by an non-satisfiable family $\mathcal{T} = \{\varphi_{\alpha}\}_{\alpha<\kappa}$ of sentences all whose subfamilies of cardinality less than~$\kappa$ are satisfiable.
The theory $T = T_{\kappa} \cup \{\Discrete(R)\} \cup \bigl\{\forall y\bigl(P(y)\wedge \overline{\alpha}\lhd y\rightarrow \varphi_{\alpha}^{R(\cdot,y)} : \alpha<\kappa\bigr)\bigr\}$ of cardinality~$\kappa$ has the desired properties, by the same earlier argument.
\end{proof}

\section{Metastability and uniform metastability}
\label{sec:metastability}

This section is concerned with connections between compactness of a logic~${\sL}$ for metric structures and the notion of \emph{metastable convergence} of nets in metric spaces, i.e., in suitable structures of~${\sL}$.  
Metastability is a reformulation of the Cauchy property for nets, i.e., a net in a metric space is metastable if and only if it is Cauchy.
However, for a collection of nets, being uniformly metastable is weaker than being uniformly Cauchy.
The main results of this section are Theorem~\ref{thm:cpct-metastab} and Theorem~\ref{thm:cpct-metastab-count}, which may be roughly stated as follows:
A logic $\sL$ for metric structures is compact if and only if every theory of convergence in $\sL$ is a theory of uniformly metastable convergence.

\subsection{Metastability: Basic definitions and examples}

The following paragraphs describe the metastable viewpoint of convergence first introduced by Tao~\cite{Tao:2008}. 
The reader is referred to our earlier paper for details~\cite{Duenez-Iovino:2017}.

\begin{definition}[Samplings of a directed set]\label{def:sampling}
  A \emph{directed set} is a nonempty set~$\DD$ with a non-strict partial order~$\preceq$ such that every two elements of~$\DD$ have an upper bound but $\DD$ has no largest element.
  For $i\in\DD$, let $\DD_{\succeq i} = \{j\in\DD : i\preceq j\}$.
  A \emph{sampling} of~$\DD$ is a collection $\eta = (\eta_i : i\in\DD)$ of nonempty finite subsets of~$\DD$ such that $\eta_i\subseteq \DD_{\succeq i}$ for every $i\in\DD$.
\end{definition}

\begin{definition}[Metastable net]\label{def:metastable}
Let $(\DD,\preceq)$ be a directed set and $(Y,\dd)$ a metric space.
A \emph{$\DD$-net in~$Y$} is a collection $\ab = (a_i : i\in\DD)$ of elements of~$Y$.
A $\DD$-net $\ab$ in~$Y$ is said to be \emph{metastable} if for every $\epsilon>0$ and every sampling~$\eta$ there exists $i\in\DD$ such that $\dd(a_j,a_k) \le \epsilon$ for all $j,k\in\eta_i$.
The net~$\ab$ is said to be \emph{pointed metastable} if there is $b\in Y$ such that, for every $\epsilon>0$ and every sampling~$\eta$, there exists $i\in\DD$ such that $\dd(a_j,b) \le \epsilon$ for all $j\in\eta_i$.
In this case, $\ab$ is said to be \emph{metastable near~$b$.}
  In either case above, $i$ is said to \emph{witness} the (pointed) $[\epsilon,\eta]$-metastability (near~$b$) of~$\ab$.
\end{definition}

\begin{proposition}\label{prop:Cauchy-metastab}\hfill
  \begin{enumerate}%[\normalfont(1)]
  \item A net is Cauchy if and only if it is metastable.
  \item A net is pointed metastable (near~$b$) if and only if it is convergent (to~$b$);
    in particular, a net is metastable near no more than one point, necessarily its limit.
\end{enumerate}
\end{proposition}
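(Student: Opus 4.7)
The proposition is essentially an unpacking of definitions; both parts follow the same contrapositive scheme, so I would prove them in parallel.

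For the forward direction of~(1), I would show Cauchy implies metastable directly. Given $\epsilon>0$ and a sampling~$\eta$, pick $i\in\DD$ witnessing the Cauchy condition, i.e., $\dd(a_j,a_k)\le\epsilon$ for all $j,k\succeq i$. Since $\eta_i\subseteq\DD_{\succeq i}$ by the definition of sampling, the same~$i$ witnesses $[\epsilon,\eta]$-metastability. The forward direction of~(2) is identical: a tail index at which $\ab$ is within~$\epsilon$ of~$b$ works for every sampling since $\eta_i\subseteq\DD_{\succeq i}$.

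The converses are proved by contrapositive and are the only place one actually uses the flexibility of samplings. For~(1), suppose $\ab$ is not Cauchy: there is $\epsilon>0$ such that for every $i\in\DD$ one can choose $j_i,k_i\in\DD_{\succeq i}$ with $\dd(a_{j_i},a_{k_i})>\epsilon$. Define $\eta_i = \{j_i,k_i\}$; this is a sampling since each $\eta_i$ is a nonempty finite subset of~$\DD_{\succeq i}$. By construction no $i$ can witness $[\epsilon,\eta]$-metastability, so $\ab$ is not metastable. For~(2), proceed analogously: if $\ab$ does not converge to~$b$, choose $\epsilon>0$ and, for each $i\in\DD$, an index $j_i\in\DD_{\succeq i}$ with $\dd(a_{j_i},b)>\epsilon$; the singleton sampling $\eta_i = \{j_i\}$ then defeats pointed $[\epsilon,\eta]$-metastability near~$b$.

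The uniqueness clause of~(2) is then immediate: if $\ab$ were pointed metastable near both $b$ and $b'$, it would converge to both by the direction just proved, which is impossible in a metric space (or, directly, pick $\epsilon<\dd(b,b')/2$ and combine the two witnesses via an upper bound $i\succeq i_b,i_{b'}$ and the triangle inequality).

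No step poses a real obstacle; the only subtlety is verifying that the pair-valued (resp.\ singleton) family $\eta_i$ defined by choice in the contrapositive arguments genuinely satisfies Definition~\ref{def:sampling}, i.e., that $\eta_i$ is a nonempty finite subset of~$\DD_{\succeq i}$. This is built into the choice of $j_i,k_i\succeq i$, so the argument is complete once the two parallel contrapositives are spelled out.
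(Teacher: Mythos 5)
Your proof is correct and is precisely the routine unpacking that the paper delegates elsewhere: the paper proves~(1) by citing \cite[Proposition~1.5]{Duenez-Iovino:2017} and calls~(2) a ``routine adaptation,'' and your contrapositive construction of defeating samplings (pairs $\{j_i,k_i\}$ for Cauchyness, singletons $\{j_i\}$ for convergence) is exactly that standard argument. One small caveat: the parenthetical ``direct'' uniqueness argument does not work as written, since the witnesses $i_b$ and $i_{b'}$ only control the possibly disjoint sample sets $\eta_{i_b}$ and $\eta_{i_{b'}}$, so passing to an upper bound $i\succeq i_b,i_{b'}$ yields no estimate on $\dd(b,b')$---but your primary route (pointed metastability near~$b$ implies convergence to~$b$, plus uniqueness of limits in a metric space) is sound, so the proposition is fully proved.
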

\begin{proof}
  \begin{enumerate}
  \item \cite[Proposition~1.5]{Duenez-Iovino:2017}.
  \item Routine adaptation of the proof of~(1).\qedhere
\end{enumerate}
\end{proof}

\begin{definition}\label{def:rate-metastab}
A \emph{rate of metastability~$\Eb$ for samplings of the directed set~$\DD$} is a collection $\Eb = (E_{\epsilon,\eta})$ of nonempty finite subsets of~$\DD$, indexed by positive reals $\epsilon>0$ and samplings~$\eta$ of~$\DD$.

A net $\ab$ in $(X,\dd)$ \emph{is $\Eb$-metastable} (or \emph{admits the rate~$\Eb$ of metastable convergence [near~$b$]}) if, for every $\epsilon>0$ and every sampling~$\eta$, the $[\epsilon,\eta]$-metastability [near~$b$] of~$\ab$ has a witness $i\in E_{\epsilon,\eta}$.

Given a collection $A$ of $\DD$-nets in a metric space~$(Y,\dd)$, we say that
\begin{itemize}
\item $A$ \emph{is $\Eb$-uniformly metastable} (or $A$ \emph{admits the uniform rate~$\Eb$ of metastable convergence}) if every $\ab\in A$ is $\Eb$-metastable.
\item $A$ \emph{is $\Eb$-uniformly metastable near~$\bb$} (or $A$ \emph{admits the uniform rate~$\Eb$ of metastable convergence near~$\bb$}) if $\bb$ is a collection $(b_a : \ab\in A)$ of points in~$Y$ such that every $\ab\in A$ is $\Eb$-metastable near~$b_a$.
\end{itemize}
We say that $A$~\emph{is pointed $\Eb$-uniformly metastable} if $A$ is $\Eb$-uniformly metastable near some~$\bb$.
We also say that $A$ \emph{is Cauchy} (resp., \emph{is convergent}) if every $\ab\in A$ is Cauchy (resp., is convergent)
\end{definition}

\begin{proposition}\label{prop:unif-metastability-vs-pointed}
  If $A$ is pointed $\Eb$-uniformly metastable, then $A$ is $\widetilde{\Eb}$-uniformly metastable, where $\widetilde{E}_{\epsilon,\eta} = E_{\epsilon/2,\eta}$.
\end{proposition}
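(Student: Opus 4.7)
The plan is to reduce ordinary uniform metastability to the pointed version via a straightforward triangle-inequality argument, using the factor of~$2$ built into $\widetilde{\Eb}$ to absorb two $\epsilon/2$ contributions.

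First I would unfold the hypothesis: since $A$ is pointed $\Eb$-uniformly metastable, there is a collection $\bb = (b_a : \ab\in A)$ of points of~$Y$ such that, for every $\ab\in A$, every $\epsilon>0$, and every sampling~$\eta$ of~$\DD$, one can find a witness $i\in E_{\epsilon,\eta}$ with $\dd(a_j,b_a)\le \epsilon$ for all $j\in\eta_i$. Fix now an arbitrary $\ab\in A$, a positive real $\epsilon>0$, and a sampling~$\eta$ of~$\DD$; the goal is to exhibit a witness $i\in \widetilde{E}_{\epsilon,\eta} = E_{\epsilon/2,\eta}$ of the ordinary $[\epsilon,\eta]$-metastability of~$\ab$.

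Applying the pointed hypothesis to~$\ab$ with tolerance $\epsilon/2$ and the same sampling~$\eta$ yields an index $i\in E_{\epsilon/2,\eta}$ such that $\dd(a_j,b_a)\le \epsilon/2$ for every $j\in\eta_i$. For any $j,k\in\eta_i$, the triangle inequality then gives
\[
\dd(a_j,a_k) \le \dd(a_j,b_a)+\dd(b_a,a_k) \le \tfrac{\epsilon}{2}+\tfrac{\epsilon}{2} = \epsilon,
\]
so this same $i$, which lies in $\widetilde{E}_{\epsilon,\eta}$ by definition, witnesses the $[\epsilon,\eta]$-metastability of~$\ab$. Since $\ab$, $\epsilon$, and $\eta$ were arbitrary, $A$ is $\widetilde{\Eb}$-uniformly metastable.

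There is no serious obstacle here; the proposition is essentially a bookkeeping observation that the pointed notion controls the unpointed notion with a loss of one factor of~$2$ in the $\epsilon$-index of the rate. The only point worth being careful about is that the witness~$i$ is allowed to depend on~$\ab$ (as is the base point~$b_a$), which is consistent with Definition~\ref{def:rate-metastab}: what is uniform in~$\ab$ is the finite index set $E_{\epsilon,\eta}$ from which the witness may be drawn, not the witness itself.
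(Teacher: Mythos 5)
Your proof is correct and follows essentially the same route as the paper's: apply the pointed hypothesis at tolerance $\epsilon/2$ with the same sampling, then use the triangle inequality through the base point $b_a$ to conclude that the same witness $i\in E_{\epsilon/2,\eta}=\widetilde{E}_{\epsilon,\eta}$ works for ordinary $[\epsilon,\eta]$-metastability. Your closing remark that the witness and base point may depend on $\ab$ while only the finite set $E_{\epsilon,\eta}$ is uniform is exactly the right reading of Definition~\ref{def:rate-metastab}.
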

The converse of Proposition~\ref{prop:unif-metastability-vs-pointed} fails, as shown by family~$B_0$ in Remarks~\ref{rem:metastability} below.
\begin{proof}
  For $\ab\in A$, if $i\in E_{\epsilon,\eta}$ witnesses the pointed $[\epsilon/2,\eta]$-metastability of~$\ab$ (necessarily near its limit~$b\in X$, by Proposition~\ref{prop:Cauchy-metastab}~(2)), the same $i$ witnesses the $[\epsilon,\eta]$-metastability of~$\ab$, since for $j,k\in\eta_i$ we have $\dd(a_j,a_k) \le \dd(a_j,b)+\dd(a_k,b) \le \epsilon/2+\epsilon/2 = \epsilon$.
\end{proof}

\begin{remarks}\hfill
\label{rem:metastability}
  \begin{enumerate}
  \item Evidently, if a net~$\ab$ admits some rate~$\Eb$ of metastability, then $\ab$ is metastable, hence Cauchy by Proposition~\ref{prop:Cauchy-metastab};
  in particular, every uniformly metastable family~$A$ is a Cauchy family that is not uniformly Cauchy.
Similarly, every pointed uniformly metastable family is a convergent family.
\item Conversely, every metastable (i.e., Cauchy) net $\ab$ admits some rate~$\Eb$ of metastability.
    Since such net is Cauchy by Proposition~\ref{prop:Cauchy-metastab}, given $\epsilon>0$ there is $i = i_{\epsilon}\in\DD$ such that $\dd(a_j,a_k)\le\epsilon$ for all $j,k\in\DD_{\succeq i}$, so it suffices to chose $E_{\epsilon,\eta} = \{i_{\epsilon}\}$. 
    Note that $E_{\epsilon,\eta}$ so chosen is independent of the sampling~$\eta$.
    Similarly, every convergent net admits some rate of metastability near its limit (and only near its limit, by Proposition~\ref{prop:Cauchy-metastab}).
\item On the other hand, if a family $A$ of nets admits a uniform rate of metastability $\Eb = (E_{\epsilon,\eta})$ such that $E_{\epsilon,\eta} = E_{\epsilon}$ is independent of the sampling~$\eta$, then for every $\epsilon>0$ there is an upper bound $i_{\epsilon}$ for the finite set~$E_{\epsilon}$.
  By considering samplings $\eta$ with $\eta_l = \{j,k\}$ for all $l\in E_{\epsilon}$, we see that $\dd(a_j,a_k)\le\epsilon$ for all $j,k\ge i_{\epsilon}$ and all $\ab\in A$, hence $A$ is uniformly Cauchy in the classical sense.
  Similarly, if $A$ is pointed $\Eb$-uniformly metastable with rates $E_{\epsilon}$ independent of the sampling, then $A$ is a convergent family whose limits are approached uniformly in the classical sense.
\item\label{item:increasing-nets} The family $B$ of non-increasing $\DD$-nets in the discrete space~$\{0,1\}$ (i.e., $\ab$ in~$\{0,1\}$ satisfying $a_i\ge a_j$ if $i\preceq j$) admits the uniform metastability rate~$\Eb = (E_{\epsilon,\eta})$ given by $E_{\epsilon,\eta} = E_{\eta} = \{k,l\}$ (independent of~$\epsilon$), where $k$ is a completely arbitrary element of~$\DD$ (e.g., the smallest element thereof, if one exists), and $l\in\DD$ is any upper bound on~$\eta_k$;
namely, for $0<\epsilon<1$, we show that the $[\epsilon,\eta]$-metastability of any $\ab\in A$ is witnessed by either $k$ or~$l$. 
Indeed, $\ab$ is either constant or not constant on~$\eta_k$:
In the former case, the $[\epsilon,\eta]$-metastability of~$\ab$ is witnessed by~$k$; 
in the latter, $a_j=0$ for some $j\in\eta_k$ so, by monotonicity and the choice of~$l$, it follows that $\ab$ is identically zero on~$\eta_l$;
thus, the metastability is witnessed by~$l$.
The family~$B$ is uniformly metastable (and hence Cauchy) but not uniformly Cauchy. 
By removing the constant net~$1_{\bullet} = (1:i\in\DD)$ from~$B$, one obtains a subfamily~$B_0$, still $\Eb$-uniformly metastable, consisting of nets that are eventually zero (hence all convergent to zero).
However, $B_0$ is \emph{not} uniformly metastable near zero:
Given any fixed finite subset~$S$ of~$\DD$, the family $B_0$ contains a net taking the constant value $1$ on~$S$.
(Of course, the full family~$B$ is not pointed uniformly metastable either.)

\item By contrast, the Cauchy family $C$ of all $\{0,1\}$-valued eventually-zero $\DD$-nets (i.e., $\DD$-nets $\ab$ for which there exists $i\in\DD$ such that $a_j = 0$ for all $j\succeq i$) is not uniformly metastable.
  (\emph{A fortiori,} $C$ is not uniformly metastable near~$0$.)  
  Indeed, given any nonempty finite subset~$S$ of~$\DD$, let $k$ be an upper bound for~$S$ in~$\DD$, and let $l\succeq k$ ($l\ne k$).
  Fix any sampling~$\eta$ of~$\DD$ such that $\eta_i = \{k,l\}$ for all $i\in S$.
  Let $\ab$ be an arbitrary $\{0,1\}$-valued eventually-zero $\DD$-net with $a_k=1$ and $a_i=0$ for $i\succeq l$.
  By construction of $k,l,\eta,\ab$, we have $\ab\in C$, but the $[\epsilon,\eta]$-metastability of~$\ab$ has no witness $i\in S$ if $0<\epsilon<1$.
  Since this holds for arbitrary nonempty $S\subseteq\DD$, we see that no rate $E_{\epsilon,\eta}$ applies to~$C$ uniformly, so $C$ is not uniformly metastable.

\item For any infinite cardinal~$\kappa$ (regarded as the ordered set of its ordinal predecessors) and ordinal $\alpha<\kappa$, consider the $\kappa$-sequence $\ab^{(\alpha)} = (a_i^{(\alpha)} : i<\kappa)$ defined by
  \begin{equation*}
    a^{(\alpha)}_i =
    \begin{cases}
      0,
      &\text{if $i\le\alpha$, and $i$ is even (or a limit ordinal),}\\
      1,
      &\text{if $i>\alpha$, or $i$ is an odd (successor) ordinal.}
    \end{cases}
  \end{equation*}
  The family $D = \{\ab^{(\alpha)} : \alpha<\kappa\}$ consists of sequences all converging to~$1$.
  However, $D$~is not pointed uniformly metastable.  
  Consider the sampling $\eta = (\eta_{\alpha} : \alpha<\kappa)$ of~$\kappa$ given by $\eta_{\alpha} = \{\alpha,\alpha+1\}$.
  For $0<\epsilon<1$, the $[\epsilon,\eta]$-metastability of~$\ab^{(\alpha)}$ near~$1$ has no witness $i<\alpha$;
  in particular, no rate of pointed $[\epsilon,\eta]$-metastability applies uniformly to the family~$D$, which is thus not pointed uniformly metastable.
  (Interchanging the roles of~$1$ and~$0$, the family~$D$ is a subfamily of $C$ in~(5) above, for the directed set $(\kappa,<)$.)
\end{enumerate}
\end{remarks}

\begin{proposition}\label{prop:UMP-pointed-vs-0}
  Fix a directed set~$(\DD,\preceq)$ and a metric space~$(Y,\dd)$.
  Let $A$ be any collection of convergent $\DD$-nets in~$Y$.
  For any $\ab\in A$, let $b_a\in Y$ denote its limit.
  Let $\widetilde{A} = \{\dd(\ab,b_a): \ab\in A\}$ be the collection of real~$\DD$-nets $\dd(\ab,b_a) = (\dd(a_i,b_a) : i\in\DD)$.
  Every net $\dd(\ab,b_a)\in \widetilde{A}$ converges to zero.
If $\widetilde{A}$ is $\Eb$-uniformly metastable near zero, then $A$ is pointed $\Eb$-uniformly metastable (with the same rate~$\Eb$, near~$\bb$).
\end{proposition}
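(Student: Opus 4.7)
The plan is to observe that this proposition is essentially a tautology once the relevant definitions are unpacked; no nontrivial estimate is needed because the metric $\dd(a_i,b_a)$ is \emph{itself} the thing one must bound in order to witness pointed metastability near~$b_a$.

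First I would dispense with the preliminary claim that each real net $\dd(\ab,b_a)\in\widetilde{A}$ converges to zero: by hypothesis $\ab$ converges to~$b_a$ in $(Y,\dd)$, which means $\dd(a_i,b_a)\to 0$, i.e., the real net $\dd(\ab,b_a)$ converges to~$0$ in~$\RR$ (with its standard metric).

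For the main assertion, I would fix an arbitrary $\ab\in A$, an arbitrary $\epsilon>0$, and an arbitrary sampling~$\eta$ of~$\DD$, and then directly translate between the two metastability statements. By the assumption that $\widetilde{A}$ is $\Eb$-uniformly metastable near zero applied to the net $\dd(\ab,b_a)\in\widetilde{A}$, there exists $i\in E_{\epsilon,\eta}$ such that
\[
\bigl|\dd(a_j,b_a) - 0\bigr| \le \epsilon \qquad \text{for every } j\in\eta_i,
\]
which is precisely the statement that the same $i\in E_{\epsilon,\eta}$ witnesses the $[\epsilon,\eta]$-metastability of~$\ab$ near~$b_a$. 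Since $\ab\in A$, $\epsilon>0$, and $\eta$ were arbitrary, this shows that $A$ is $\Eb$-uniformly metastable near~$\bb$, with the very same rate~$\Eb$.

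There is no real obstacle in this argument; the content of the proposition lies in setting up the notational correspondence $\ab\leftrightarrow \dd(\ab,b_a)$ between nets in $Y$ and real nets, so that pointed metastability near $b_a$ in~$Y$ becomes metastability near zero in~$\RR$. The proof just makes that correspondence explicit.
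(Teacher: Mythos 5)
Your proof is correct and matches the paper, which simply dismisses the statement as immediate from the definitions; your argument is exactly the definition-unpacking the authors had in mind, identifying the witness $i\in E_{\epsilon,\eta}$ for $\dd(\ab,b_a)$ near~$0$ with the witness for $\ab$ near~$b_a$.
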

\begin{proof}
  Immediate from the definitions.
\end{proof}

\begin{definition}\label{def:product-directed-sets}
  The \emph{product} of two directed sets~$(\DD,\preceq)$, $(\EE,\le)$ is $(\DD\times\EE,\sqsubseteq)$, where
\begin{equation*}
  (i,j)\sqsubseteq(k,l)\qquad
  \text{iff}\qquad
  \text{$i\preceq k$ and $j\le l$,}
\end{equation*}
for all $i,k\in\DD$, $j,l\in\EE$.
One verifies immediately that $(\DD\times\EE,\sqsubseteq)$ is a directed set.

Given a $\DD$-net $\ab = (a_i:i\in\DD)$ and an $\EE$-net $\bb = (b_j : j\in\EE)$, both in the same metric space~$(Y,\dd)$, their \emph{mutual distance} is the real $(\DD\times\EE)$-net
\begin{equation*}
\dd(\ab,\bb) := (\dd(a_i,b_j) : (i,j)\in\DD\times\EE).
\end{equation*}
The ($\DD\times\DD$)-net $\dd(\ab,\ab)$ will be called the \emph{self-distance of~$\ab$.}
\end{definition}

With the definition above, a net is Cauchy iff its self-distance converges to~$0$.

\begin{definition}
  An \emph{explicit majorization} for a directed set $(\DD,\preceq)$ is a mapping $(i,j)\mapsto i\ovee j$ from $\DD\times\DD$ to $\DD$ such that $i\ovee j\succeq i$ and $i\ovee j\succeq j$ for all $i,j\in\DD$.
  
  Given a sampling~$\eta$ and an explicit majorization~$\ovee$ for~$\DD$, the collection $\check{\eta} := (\eta_{i\ovee j}\times\eta_{i\ovee j} : (i,j)\in\DD\times\DD)$ is a sampling of~$\DD\times\DD$ naturally induced by~$\eta$ via~$\ovee$.

  If $S$ is any subset of~$\DD\times\DD$, let
  \begin{equation*}
    S^{\ovee} := \{i\ovee j : (i,j)\in S\}.
  \end{equation*}
  Given any rate of uniform metastability $\Eb = (E_{\epsilon,\zeta})_{\epsilon,\zeta}$ for ($\DD\times\DD$)-nets, 
the collection $\Eb^{\ovee} := \bigl((E_{\epsilon, {\check{\eta}}})^{\ovee}\bigr)_{\epsilon,\eta}$ is a naturally induced rate of uniform metastability for~$\DD$-nets.
\end{definition}

\begin{proposition}\label{prop:pointed-uni-meta-implies-uni-meta}
  Let $(\DD,\preceq)$ be a directed set with explicit majorization~$\ovee$.

  For every nonempty finite subset $S\subseteq\DD\times\DD$, all $\epsilon>0$, all samplings $\eta$ of~$\DD$ and all $\DD$-nets~$\cb$ (in some metric space), if $S$ is a rate of pointed $[\epsilon,\check{\eta}]$-metastability for the real $(\DD\times\DD)$-net $\dd(\cb,\cb)$ near~$0$, then $S^{\ovee}$ is a rate of $[\epsilon,\eta]$-metastability for~$\cb$.

In particular, if $\Eb$ is a rate of pointed metastability for~$\dd(\cb,\cb)$ near~$0$, then $\Eb^{\ovee}$ is a rate of metastability for~$\cb$.
\end{proposition}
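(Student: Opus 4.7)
The proof plan is essentially to unpack the definitions; there is no serious analytic content beyond carefully tracking what the induced sampling $\check{\eta}$ and the induced rate $S^{\ovee}$ mean. I would first prove the pointwise statement (for fixed $S$, $\epsilon$, $\eta$), and then derive the global statement by applying the pointwise statement with $S = E_{\epsilon,\check{\eta}}$ for each pair $(\epsilon,\eta)$.

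For the pointwise statement, fix $\epsilon>0$, a sampling~$\eta$ of~$\DD$, the net~$\cb$, and a finite set $S\subseteq\DD\times\DD$ witnessing pointed $[\epsilon,\check{\eta}]$-metastability of $\dd(\cb,\cb)$ near~$0$. By Definition~\ref{def:rate-metastab}, this means there exists $(i,j)\in S$ such that for every $(k,l)\in\check{\eta}_{(i,j)}$ one has $\dd(c_k,c_l)\le \epsilon$. By construction $\check{\eta}_{(i,j)} = \eta_{i\ovee j}\times\eta_{i\ovee j}$, so this says precisely that $\dd(c_k,c_l)\le\epsilon$ for all $k,l\in\eta_{i\ovee j}$. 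Setting $m:=i\ovee j\in S^{\ovee}$, we see that $m$ witnesses $[\epsilon,\eta]$-metastability of~$\cb$, which is exactly what it means for $S^{\ovee}$ to be a rate of $[\epsilon,\eta]$-metastability for~$\cb$.

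For the global statement, suppose $\Eb=(E_{\epsilon,\zeta})$ is a rate of pointed metastability of $\dd(\cb,\cb)$ near~$0$. Given $\epsilon>0$ and a sampling~$\eta$ of~$\DD$, apply the definition with $\zeta=\check{\eta}$: the set $E_{\epsilon,\check{\eta}}$ witnesses pointed $[\epsilon,\check{\eta}]$-metastability of $\dd(\cb,\cb)$ near~$0$. The pointwise statement then yields that $(E_{\epsilon,\check{\eta}})^{\ovee}$ witnesses $[\epsilon,\eta]$-metastability of~$\cb$, i.e., $\Eb^{\ovee}$ is a rate of metastability for~$\cb$.

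I expect no real obstacle; the only thing to watch is the bookkeeping that $S^{\ovee}$ is indeed a \emph{finite} subset of~$\DD$ (which is clear since $\ovee\colon \DD\times\DD\to\DD$ and $S$ is finite), and that the definition of $\check\eta$ is used in the form $\check{\eta}_{(i,j)} = \eta_{i\ovee j}\times\eta_{i\ovee j}$ so that the condition ``for all $(k,l)\in\check{\eta}_{(i,j)}$'' automatically quantifies over the diagonal pairs $(k,k)$ too (which only contribute the trivial inequality $\dd(c_k,c_k)=0\le\epsilon$, so they do no harm).
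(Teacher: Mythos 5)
Your proposal is correct and follows essentially the same route as the paper: both arguments simply unpack the definitions of $\check{\eta}$ and $S^{\ovee}$ to transfer a witness $(i,j)\in S$ of pointed $[\epsilon,\check{\eta}]$-metastability of $\dd(\cb,\cb)$ near $0$ into the witness $i\ovee j\in S^{\ovee}$ of $[\epsilon,\eta]$-metastability of $\cb$, and then deduce the global statement by instantiating $S=E_{\epsilon,\check{\eta}}$.
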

\begin{proof}
  By the definitions of $\check{\eta}$ and~$S^{\ovee}$, if~$\dd(\cb,\cb)$ admits $S$ as a rate of $[\epsilon,\check{\eta}]$-metastability near~$0$, then there exists~$(k,l)\in S$ such that, for all~$i,j\in\eta_{k\ovee l}$, we have $\dd(c_i,c_j) \le \epsilon$.
The element $k\ovee l \in S^{\ovee}$ thus witnesses the $[\epsilon,\eta]$-metastability of~$\cb$.
\end{proof}

\subsection{Uniform metastability from a topological viewpoint}
\label{SS:UMP topological}

The following proposition is a purely topological analogue of the Uniform Metastability Principle~\cite[Proposition~2.4]{Duenez-Iovino:2017}, and of the main result of the Avigad-Iovino paper on ultraproducts and metastability~\cite[Theorem~2.1]{Avigad-Iovino:2013}.

\begin{proposition}[Topological Uniform Metastability Principle]
\label{P: Ptwise Cauchy and unif metastability}
Let $X$ be a topological space, and $\kappa$ an infinite cardinal.
  If $X$ is $[\kappa,\omega]$-compact and  $(\DD,\preceq)$ is a directed set with $\card(\DD) \le \kappa$, then the following conditions are equivalent for any $\DD$-net $\fb = (f_i : i\in\DD)$ of continuous functions from $X$ into a metric space $(Y,\dd)$:
\begin{enumerate}
\item The nets $\fb(x) = (f_i(x):i\in\DD)$ for $x\in X$ are all Cauchy.
\item The family $A = \{ \fb(x) : x \in X \}$ is uniformly metastable.
\end{enumerate}
 The following properties are also equivalent:
\begin{enumerate}[resume]
\item The nets $\fb(x)$ for $x\in X$ are all convergent.
\item The family $A$ is pointed uniformly metastable.
\end{enumerate}
In particular, if $(Y,\dd)$ is complete, properties~(1)--(4) above are all equivalent.
\end{proposition}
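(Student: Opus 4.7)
The plan is to derive (2) from (1) and (4) from (3) by converting the pointwise Cauchy/convergence hypotheses into open covers of~$X$ of cardinality at most~$\kappa$, and then applying $[\kappa,\omega]$-compactness in its open-cover form (every open cover of cardinality~$\le\kappa$ admits a finite subcover) to extract the finite set of witnesses. The reverse implications $(2)\Rightarrow(1)$ and $(4)\Rightarrow(3)$ are immediate from Proposition~\ref{prop:Cauchy-metastab}, since any element $\fb(x)$ of a (pointed) uniformly metastable family is itself (pointed) metastable, hence Cauchy (resp., convergent).

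For $(1)\Rightarrow(2)$, I would fix $\epsilon>0$ and a sampling~$\eta$ of~$\DD$, and for each $i\in\DD$ set
\begin{equation*}
U_i \;=\; \{x\in X : \dd(f_j(x),f_k(x)) < \epsilon \text{ for all } j,k\in\eta_i\}.
\end{equation*}
Since $\eta_i$ is finite and each $x\mapsto\dd(f_j(x),f_k(x))$ is continuous, $U_i$ is open. The Cauchy property of~$\fb(x)$ yields some $i_0\in\DD$ with $\dd(f_j(x),f_k(x))<\epsilon$ whenever $j,k\succeq i_0$; since $\eta_{i_0}\subseteq\DD_{\succeq i_0}$, this gives $x\in U_{i_0}$. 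Thus $\{U_i\}_{i\in\DD}$ is an open cover of~$X$ of cardinality $\le\kappa$, and $[\kappa,\omega]$-compactness produces a finite $E\subseteq\DD$ with $X=\bigcup_{i\in E}U_i$; this $E$ serves as the required~$E_{\epsilon,\eta}$.

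The implication $(3)\Rightarrow(4)$ follows the same template, but the open sets must encode proximity to the (possibly discontinuously varying) limits~$b_x$. The main obstacle is that the naive choice $\{x:\dd(f_j(x),b_x)<\epsilon\}$ need not be open, since $x\mapsto b_x$ may fail to be continuous. I would sidestep this by exploiting that, for the convergent net~$\fb(x)\to b_x$,
\begin{equation*}
\dd(f_j(x),b_x) \;=\; \lim_{k}\dd(f_j(x),f_k(x)) \;\le\; \sup_{k\succeq i}\dd(f_j(x),f_k(x))
\end{equation*}
for every $i\in\DD$, and by noting that each tail-supremum above is lower semicontinuous as a pointwise supremum of continuous functions. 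Accordingly, I would set
\begin{equation*}
U_i \;=\; \Bigl\{x\in X : \sup_{k\succeq i}\dd(f_j(x),f_k(x)) < \epsilon \text{ for all } j\in\eta_i\Bigr\},
\end{equation*}
which is a finite intersection of open sets and hence open. Cauchyness at~$\epsilon/2$ supplies $i_0$ with $\dd(f_j(x),f_k(x))<\epsilon/2$ for $j,k\succeq i_0$, so the tail-supremum above is~$\le\epsilon/2<\epsilon$ whenever $j\in\eta_{i_0}$, placing $x$ in~$U_{i_0}$. A finite subcover~$\{U_i\}_{i\in E}$ supplied by $[\kappa,\omega]$-compactness then witnesses the pointed uniform metastability: picking $i\in E$ with $x\in U_i$, one obtains $\dd(f_j(x),b_x)<\epsilon$ for all $j\in\eta_i$.

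Finally, the ``in particular'' clause is immediate, since in a complete metric space every Cauchy net converges, giving $(1)\Leftrightarrow(3)$; combined with the previously established $(1)\Leftrightarrow(2)$ and $(3)\Leftrightarrow(4)$, all four conditions are then equivalent.
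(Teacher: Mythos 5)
Your treatment of the easy implications $(2)\Rightarrow(1)$ and $(4)\Rightarrow(3)$ is fine, and your open-cover argument for $(1)\Rightarrow(2)$ is correct: the sets $U_i=\{x\in X:\dd(f_j(x),f_k(x))<\epsilon\ \text{for all } j,k\in\eta_i\}$ really are open, they cover $X$ by the pointwise Cauchy hypothesis, and a finite subcover is exactly a rate $E_{\epsilon,\eta}$. This is in fact a more direct route to the unpointed half than the paper's, which proves the pointed implication first and then deduces $(1)\Rightarrow(2)$ by passing to the completion of $Y$ and invoking Proposition~\ref{prop:unif-metastability-vs-pointed}.

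The gap is in $(3)\Rightarrow(4)$, at the sentence claiming that your modified $U_i$ is ``a finite intersection of open sets and hence open.'' A pointwise supremum of continuous functions is lower semicontinuous, and for a lower semicontinuous $h$ the strict sublevel set $\{h<\epsilon\}$ need not be open (openness would require \emph{upper} semicontinuity); so the sets $U_i=\{x:\sup_{k\succeq i}\dd(f_j(x),f_k(x))<\epsilon\ \text{for all } j\in\eta_i\}$ are not open in general and the covering argument collapses. Concretely, take $X=[0,1]$, $f_k(x)=x^k$, $\eta_i=\{i\}$, $0<\epsilon<1$: then $\sup_{k\ge i}|f_i(x)-f_k(x)|=x^i$ for $x<1$ and $0$ at $x=1$, so $U_i=[0,\epsilon^{1/i})\cup\{1\}$, which contains $1$ but no neighborhood of $1$. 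Moreover this example shows the step is not merely unjustified but unrepairable along these lines: the $U_i$ cover $[0,1]$ yet admit no finite subcover, and indeed for this compact $X$ the family $\{(x^k)_{k<\omega}:x\in[0,1]\}$ of convergent sequences is \emph{not} pointed uniformly metastable (for any finite $E$ and $N=\max E$, every $x$ with $\epsilon^{1/N}<x<1$ has $x^i>\epsilon$ for all $i\in E$, so no $i\in E$ witnesses $[\epsilon,\eta]$-metastability near the limit $0$), even though it is uniformly metastable by your own $(1)\Rightarrow(2)$. The underlying obstruction is that nearness to the pointwise limit $g$ is neither an open nor a closed condition, because $g$ is in general discontinuous; note that the paper's own argument for this direction runs into the same point, since its sets $C_k=\{x:\max_{j\in\eta_k}\dd(f_j(x),g(x))\ge\epsilon\}$ involve $g$ and are asserted to be closed ``by continuity of the $f_i$,'' whereas in the example above $C_k=[\epsilon^{1/k},1)$. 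So your unpointed half stands, but the pointed half contains a genuine gap, and it is located exactly at the crux of the matter rather than at a technicality.
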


Without a suitable hypothesis such as that of $[\kappa,\omega]$-compactness, the preceding equivalence fails, since an arbitrary Cauchy family (which can always be regarded as a family of continuous functions on a discrete space~$X$) need not be uniformly metastable (e.g., families $C$, $D$ in Remarks~\ref{rem:metastability} above).

\begin{proof}
The implications $(1)\Rightarrow(2)$ and $(3)\Rightarrow(4)$ hold without any hypotheses on~$X$, by Propositions~\ref{prop:Cauchy-metastab} and~\ref{prop:unif-metastability-vs-pointed}, and Remark~\ref{rem:metastability}~(1).

$(4)\Rightarrow(3)$:\quad
Assume that $X$ is $[\kappa,\omega]$-compact and~$A$ is a family of convergent nets, and let $g:X\to Y$ be the pointwise limit of~$\fb$.
For the sake of contradiction, assume that $A$ is not pointed uniformly metastable.
By Proposition~\ref{prop:Cauchy-metastab}, every net~$\fb(x)$ may only be pointed metastable near its limit~$g(x)$, so $A$ must not be uniformly $[\epsilon,\eta]$-metastable near~$g$ for some $\epsilon>0$ and sampling $\eta$ of~$\DD$.
This means that no finite subset of~$\DD$ is a uniform rate of pointed $[\epsilon,\eta]$-metastability for~$A$;
thus, given a nonempty finite subset $S$ of~$\DD$, there exists $a\in X$ such that
\[
  \overline{\dd}_k(a)
  := \max\bigl\{ \dd(f_i(a),g(a)) : i\in\eta_k \bigr\}>\epsilon
\qquad{\text{for $k\in S$}};
\]
hence, $a\in\bigcap_{k\in S} C_k$, where $C_k= \{ x\in X : \overline{\dd}_k(x)\ge\epsilon \}$ is a closed subset of~$X$ by continuity of the functions~$f_i$.
Thus, the family $(C_k : k\in\DD)$ has the finite-intersection property.
Since $\card(\DD)\le\kappa$, by $[\kappa,\omega]$-compactness of~$X$ there exists an element $c\in\bigcap_{k\in\DD} C_k$. 
By construction of~$c$, the net $\fb(c) = (f_i(c) : i\in\DD)$ is not $[\delta,\eta]$-metastable, contradicting the assumption that $\fb(c)$ converges to~$g(c)$.

$(2)\Rightarrow(1)$:\quad
If necessary, identify $Y$ with a subset of its metric completion~$\overline{Y}$.
Assume the nets $\fb(x)$ are Cauchy for all $x\in X$.
Since any Cauchy net in~$Y\subseteq\overline{Y}$ converges in~$\overline{Y}$, the pointwise limit $g$ of $\fb$ exists as a function~$X\to\overline{Y}$.
By the implication $(4)\Rightarrow(3)$, the family~$A$ is pointed uniformly metastable, albeit the pointwise limit $g$ takes values in~$\overline{Y}$ and not necessarily in~$Y$.
Since the inclusion $Y\subseteq \overline{Y}$ is isometric,
the family~$A$ is uniformly metastable by Proposition~\ref{prop:unif-metastability-vs-pointed}.
\end{proof}

For nets of bounded real functions, we may strengthen Proposition~\ref{P: Ptwise Cauchy and unif metastability} to an equivalence under additional hypotheses on the domain~$X$:%
\footnote{We do not assume that regular spaces or paracompact spaces are Hausdorff; in particular, the class of paracompact spaces includes all pseudometric spaces.}

\begin{proposition}
\label{P: Ptwise Cauchy and unif metastability equivalence}
If $X$ is a regular paracompact topological space, then the following conditions are equivalent:
\begin{enumerate}%[\normalfont(1)]
\item
$X$ is countably compact.
\item 
If $(\DD,\preceq)$ is a directed set, and $\fb = (f_i : i\in\DD)$ is a $\DD$-net of continuous functions $f_i$ from $X$ into am complete metric space, then the family $A = \{ \fb(x) : x \in X \}$ is pointed uniformly metastable if and only if each of the nets $\fb(x)$ ($x\in X$) converges.
\item 
If $\fb = (f_n : n < \omega)$ is an $\omega$-sequence of continuous functions from $X$ into $[0,1]$, then the family $A = \{ \fb(x) : x \in X \}$ is pointed uniformly metastable if and only if each of the sequences $\fb(x)$ ($x\in X$) converges.
\end{enumerate}
\end{proposition}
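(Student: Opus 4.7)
The plan is to establish the cycle $(1)\Rightarrow(2)\Rightarrow(3)\Rightarrow(1)$. The implication $(2)\Rightarrow(3)$ is the specialization to $(\DD,Y) = (\omega,[0,1])$ (which is complete), so it requires no argument. For $(1)\Rightarrow(2)$, first observe that a paracompact countably compact space is compact: given any open cover, take a locally finite refinement $\{V_{\alpha}\}$ by paracompactness; if $\{V_{\alpha}\}$ were infinite, a countable subfamily of distinct members $V_{n}$ together with points $x_{n}\in V_{n}$ would, by countable compactness, admit a cluster point $x\in X$, contradicting the local finiteness of $\{V_{\alpha}\}$ near~$x$. Since $X$ is then compact it is $[\kappa,\omega]$-compact for every cardinal~$\kappa$, so Proposition~\ref{P: Ptwise Cauchy and unif metastability} applied with $\kappa = \card(\DD)+\omega$, together with the completeness of~$Y$, yields~(2).

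For the substantive direction $(3)\Rightarrow(1)$, I would argue the contrapositive. Assuming $X$ is not countably compact, fix a countable open cover $\{U_{n}\}_{n<\omega}$ of~$X$ without finite subcover, WLOG (replacing $U_{n}$ by $U_{1}\cup\dots\cup U_{n}$) increasing. A regular paracompact space is normal, so there exists a partition of unity $\{\sigma_{n}\}_{n<\omega}$ subordinate to $\{U_{n}\}$: each $\sigma_{n}$ is continuous $[0,1]$-valued, $\sigma_{n}(x) = 0$ for $x\notin U_{n}$, the family $\{\sigma_{n}\}$ is locally finite, and $\sum_{n}\sigma_{n}\equiv 1$. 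Set $f_{n} := \sum_{k\le n}\sigma_{k}$. Each $f_{n}\colon X\to[0,1]$ is continuous, and $\fb(x) = (f_{n}(x))_{n<\omega}$ is nondecreasing and eventually equal to~$1$ (by local finiteness), hence converges to~$1$ for every $x\in X$. To see that $A = \{\fb(x):x\in X\}$ is not pointed uniformly metastable near~$1$, take $\epsilon = 1/2$ and the sampling $\eta_{n} := \{n\}$. Any purported uniform rate $E_{1/2,\eta}$ is a finite subset of~$\omega$ of maximum, say,~$N$; since $\{U_{n}\}$ has no finite subcover there exists $x\in X\setminus(U_{1}\cup\dots\cup U_{N})$, for which $\sigma_{k}(x) = 0$ (hence $f_{k}(x) = 0$) for all $k\le N$ and thus for all $k\in E_{1/2,\eta}$, contradicting the inequality $|f_{k}(x)-1|\le 1/2$ (with $m = k\in\eta_{k}$) required of the rate.

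The main technical obstacle I foresee is justifying, in the paper's non-Hausdorff convention for \emph{regular} and \emph{paracompact} spaces (see the footnote), both the implication ``paracompact $+$ countably compact $\Rightarrow$ compact'' and the existence of partitions of unity subordinate to countable open covers. Neither step actually needs Hausdorffness: the first argument uses only locally finite refinements and cluster points of sequences (available in any countably compact space, by the closed-set FIP formulation), while the second reduces to the standard fact that paracompact $+$ regular implies normal, whereupon Urysohn's lemma and the usual indexing trick (collecting the partition subordinate to a locally finite refinement into sums indexed by the original countable cover) yield the desired $\{\sigma_{n}\}$.
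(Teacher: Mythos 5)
Your proof is correct, and the overall architecture ($(1)\Rightarrow(2)$ via Dieudonn\'e's theorem plus Proposition~\ref{P: Ptwise Cauchy and unif metastability}, $(2)\Rightarrow(3)$ trivially, $(3)\Rightarrow(1)$ by contraposition with an explicit counterexample family) matches the paper's. The difference lies in the construction for $(3)\Rightarrow(1)$. The paper does not use a partition of unity: it shrinks the increasing cover, uses regularity to extract pairwise disjoint open sets $W_n$ with $\overline{W_{n+1}}\subseteq U_{n+1}\setminus\overline{U_n}$ and marked points $x_n\in W_n$, builds bump functions $g_n$ and partial sums $h_n=\sum_{i<n}g_i$ with total sum $h$, and then defines $f_n$ to \emph{alternate} between $h_n$ and $h$ according to the parity of $n$, so that the sequences $\fb(x_n)$ reproduce verbatim the oscillating family $D$ of Remarks~\ref{rem:metastability}, whose failure of pointed uniform metastability was already established there via the sampling $\eta_\alpha=\{\alpha,\alpha+1\}$. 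Your version takes the monotone partial sums $f_n=\sum_{k\le n}\sigma_k$ of a subordinate partition of unity and refutes pointed uniform metastability directly with the singleton sampling $\eta_n=\{n\}$ and $\epsilon=1/2$, exploiting that $f_k$ vanishes off $U_k$ while no finite subfamily of the $U_n$ covers $X$. This is a legitimate and arguably cleaner route: it avoids the parity trick and the explicit disjointification, at the modest cost of invoking the existence of partitions of unity on regular paracompact (not necessarily Hausdorff) spaces --- a point you correctly flag and which does go through, since paracompact plus regular gives normality and the shrinking property without any $T_1$ assumption, exactly as the paper itself uses in its own construction. The two constructions buy essentially the same thing; the paper's choice seems motivated by tying the counterexample back to the catalogue of families in Remarks~\ref{rem:metastability}, whereas yours is self-contained.
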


\begin{proof}
  $(1)\Rightarrow(2)$:\quad
  Paracompact spaces that are countably compact are compact~\cite{Dieudonne:1944}.
  Hence, assertion~(2) follows from the the equivalence of $(3)$ and~$(4)$ in Proposition~\ref{P: Ptwise Cauchy and unif metastability}.
  
  $(2)\Rightarrow(3)$:\quad
  Immediate.

  $(3)\Rightarrow(1)$:\quad
By contraposition, assume that $X$ is paracompact and regular, but not countably compact.
% It suffices to construct a sequence $\fb = (f_i : i<\omega)$ of continuous functions $f_i : X\to[0,1]$, such that $\fb$ is pointwise convergent but not pointed uniformly metastable.
Then, there is a strictly increasing sequence $U_0\subsetneq U_1 \subsetneq \cdots \subsetneq U_n\subsetneq\cdots$ of nonempty open sets such that $X = \bigcup_nU_n$.
Every paracompact space is a shrinking space, so one may further assume that the collection $(\overline{U_n} : n<\omega)$ is locally finite, and $\overline{U_n}\subsetneq \overline{U_{n+1}}$~\cite[Lemma 41.6]{Munkres2000}.
By regularity, we may choose nonempty open sets~$W_0,W_1,\dots,W_n,\dots$ satisfying:
\begin{align*}
  \overline{W_0} &\subseteq U_0, \\
  \overline{W_{n+1}} &\subseteq U_{n+1}\setminus \overline{U_n},
\end{align*}
and a point $x_n\in W_n$ for each~$n$.
Clearly, $(\overline{W_n} : n<\omega)$ is a locally finite family of pairwise disjoint sets.
Every regular paracompact space is completely regular (this is an elementary exercise), so there exist continuous functions $g_n : X \to [0,1]$ such that $g_{\alpha}(x_{\alpha}) = 1$ and $g_{\alpha}(X\setminus W_{\alpha})=0$. 
For~$n<\omega$, let $h_n = \sum_{i<n}g_i$.
We may also define a function $h : X\to[0,\infty]$ by 
\[\tag{\dag}
  h(x) := \sum_{n<\omega} g_n(x)\qquad
  \text{for each $x\in X$.}
\]
Indeed, by construction of the functions~$g_n$ and the sets~$W_n$, for fixed~$x\in X$ there is at most one non-zero term in the sum on the right-hand side of equation~$(\dag)$, so $h$ and each of the functions~$h_n$ take values in~$[0,1]$.
Moreover, each $x\in X$ has an open neighborhood $U_n$ intersecting only finitely many of the sets $\overline{W_m}$, hence the supports of only finitely many of the~$g_n$'s.
Thus, $h$ is continuous on~$X$.

For arbitrary $n<\omega$, let
\begin{equation*}
  f_n =
  \begin{cases}
    h_n,
    & \text{if $n$ is odd,}\\
    h,
    & \text{if $n$ is even.}
  \end{cases}
\end{equation*}
Clearly, $\lim_nf_n(x) = h(x)$ for all~$x\in X$, so $\fb = (f_n : n<\omega)$ is pointwise convergent.
However, for fixed~$n < \omega$, the sequence $\fb(x_n) := (f_i(x_n) : i < \omega)$ is precisely the sequence~$\ab^{(n)}$ in the last of Remarks~\ref{rem:metastability} (for $\kappa = \omega$).
Thus, $(\fb(x_n) : n < \omega)$ is not pointed uniformly metastable;
\emph{a fortiori,} neither is the larger family $\{\fb(x) : x\in X\}$.
\end{proof}

\subsection{The Main Theorem: Uniform metastability and logical compactness}

In this subsection, we connect the Uniform Metastability Principle with the notion of $[\kappa,\lambda]$-compactness for logics introduced in Section~\ref{S:prelim}-\ref{S:compactness}.

\begin{definition}
\label{D:convergence modulo:T}
Let $(\DD,\preceq)$ be a directed set, let ${\sL}$ be a logic for metric structures, and let $T$ be a uniform $L$-theory of ${\sL}$, where $L$ is a vocabulary.

Given a $\DD$-net $\Pb = (\varphi_i : i\in\DD)$ of $L$-sentences, we say that $\Pb$ is
\begin{itemize}
\item \emph{Cauchy (convergent) modulo~$T$} if the $\DD$-net $\Pb^{\cM} = (\varphi_i^{\cM} : i\in\DD)$ in~$[0,1]$ is Cauchy (convergent) in every model $\cM$ of~$T$.
  If $\Pb^{\cM}$ is convergent, let $\lim\Pb^{\cM}$ denote its limit; we shall say that $\Pb^{\cM}$ converges to it.
\item \emph{uniformly metastable modulo~$T$} if there exists a metastability rate that applies uniformly to all nets $\Pb^{\cM}$ obtained from models $\cM$ of~$T$.  
\item  \emph{pointed uniformly metastable modulo~$T$} if all nets~$\Pb^{\cM}$ are convergent, and a fixed metastability rate applies to~$\Pb^{\cM}$ near~$\lim\Pb^{\cM}$ as $\cM$ ranges over models of~$T$.
\end{itemize}
Given a $\DD$-indexed family $\tb = (t_i : i\in\DD)$ of $L$-terms, we say that $\tb$ is
\begin{itemize}
\item \emph{Cauchy (convergent) modulo~$T$} if the net $\tb^{\cM} = (t_i^{\cM}:i\in\DD)$ is Cauchy (convergent) in every model~$\cM$ of~$T$;
  the limit of such a Cauchy net (taken in the metric completion~$\overline{\cM}$ of~$\cM$ if necessary) is denoted~$\lim\tb^{\cM}$.
\item \emph{uniformly metastable modulo~$T$} if  the collection of all nets $\tb^{\cM}$, as $\cM$ varies over all models of~$T$, is uniformly metastable
\item \emph{pointed uniformly metastable modulo~$T$} if all nets $\tb^{\cM}$ are convergent, and a fixed metastability rate applies to $\tb^{\cM}$ near $\lim\tb^{\cM}$ as $\cM$ varies over all models of~$T$.
\end{itemize}
\end{definition}

The implication $(1)\Rightarrow(3)$ of the following theorem is a generalization of the Uniform Metastability Principle for continuous first-order logic~\cite{Duenez-Iovino:2017}.

\begin{theorem}\label{thm:cpct-metastab}
  Let $\sL$ be a regular logic for metric spaces.
  The following properties are equivalent for any infinite cardinal~$\kappa$:
\begin{enumerate}%[\normalfont (1)]
\item $\sL$ is $[\kappa,\omega]$-compact.
\item The following pointed Uniform Metastability Principle holds for the logic~$\sL$ and all directed sets~$(\DD,{\preceq})$ of cardinality at most~$\kappa$:
  If $T$ is a uniform $L$-theory of $\sL$ and $\Pb$ is a $\DD$-net consisting of no more than $\kappa$-many $L$-sentences, then $\Pb$ is Cauchy modulo~$T$ if and only if $\Pb$ is pointed uniformly metastable modulo~$T$.
\item The following Uniform Metastability Principle holds for the logic $\sL$ and all directed sets~$(\DD,\preceq)$ of cardinality at most~$\kappa$:
  If $\tb = (t_i : i\in\DD)$ is a collection of $L$-terms, and $T$ is any uniform $L$-theory of~$\sL$, then $\tb$ is Cauchy modulo~$T$ if and only if $\tb$ is uniformly metastable modulo~$T$.
\end{enumerate}
\end{theorem}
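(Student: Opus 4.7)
I would prove the cycle $(1)\Rightarrow(2)\Rightarrow(3)\Rightarrow(1)$. The first two implications leverage the topological and self-distance machinery already established; the third is a contrapositive construction built on Theorem~\ref{thm:k-k-cpct-cof}, and it is the main obstacle.

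For $(1)\Rightarrow(2)$, apply Proposition~\ref{P: Ptwise Cauchy and unif metastability} to the model class $X=\Mod_{\sL}(T)$ endowed with the topology whose basic closed sets are the elementary classes $\Mod_{\sL}(\varphi)$. Since $T$ is uniform, $X$ is a closed subspace of the space of all $L$-structures of~$\sL$ and hence $[\kappa,\omega]$-compact by hypothesis~(1) together with Remarks~\ref{R:countable compactness of logics}. Each sentence~$\varphi_i$ induces a continuous evaluation $f_i\colon\cM\mapsto\varphi_i^\cM$ from $X$ into~$[0,1]$: by Remark~\ref{R:reduction of [0,1]-valued to 2-valued}, preimages of~$[0,r]$ and~$[r,1]$ for Pavelka rationals~$r$ are the closed elementary classes defined by $\varphi_i\le r$ and $\varphi_i\ge r$. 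With $\card(\DD)\le\kappa$ and $[0,1]$ complete, Proposition~\ref{P: Ptwise Cauchy and unif metastability} converts the pointwise Cauchy property of the nets~$\Pb^\cM$ into pointed uniform metastability of~$\Pb$ modulo~$T$. The reverse direction in~(2) is Proposition~\ref{prop:Cauchy-metastab}(1) applied in each model.

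For $(2)\Rightarrow(3)$, use the self-distance reduction of Proposition~\ref{prop:pointed-uni-meta-implies-uni-meta}. Given a Cauchy net~$\tb=(t_i)$ of $L$-terms modulo~$T$, the $(\DD\times\DD)$-indexed family of atomic sentences $\Pb=(\varphi_{(i,j)})$ with $\varphi_{(i,j)}=\dd(t_i,t_j)$ evaluates in each $\cM\models T$ to the self-distance $\dd(\tb^\cM,\tb^\cM)$, which converges to~$0$. Hypothesis~(2) applied to~$\Pb$ (noting $\card(\DD\times\DD)\le\kappa$) yields a uniform rate~$\Eb$ of pointed metastability near~$0$; fixing any explicit majorization~$\ovee$ of~$\DD$, Proposition~\ref{prop:pointed-uni-meta-implies-uni-meta} converts~$\Eb$ into a uniform metastability rate~$\Eb^{\ovee}$ for~$\tb$ itself. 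The converse in~(3) is again Proposition~\ref{prop:Cauchy-metastab}(1).

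For $(3)\Rightarrow(1)$, argue contrapositively. By Remarks~\ref{R:countable compactness of logics}, failure of $[\kappa,\omega]$-compactness amounts to failure of $[\delta,\delta]$-compactness for some regular $\delta\le\kappa$; Theorem~\ref{thm:k-k-cpct-cof} (using that $\sL$ is regular) then delivers a satisfiable uniform theory~$T_*$ extending~$T_\delta$ in whose every model~$\cM$ the sequence $(\overline{\alpha}^\cM:\alpha<\delta)$ is cofinal in $(P^\cM,\lhd^\cM)$. The plan is to augment~$T_*$ to a uniform theory~$T$ by adjoining an auxiliary metric sort together with a function symbol~$s$ whose values $s(\overline{\alpha})$ are forced to form a Cauchy $\delta$-sequence at a prescribed explicit rate, while coupling~$s$ to the $\lhd$-structure by parameter-dependent relativizations (permitted by regularity of~$\sL$) so that any metastability witness for $\tb:=(s(\overline{\alpha}))_{\alpha<\delta}$ must lie $\lhd$-arbitrarily far to the right. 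The net~$\tb$ is then Cauchy modulo~$T$, but any candidate uniform rate $\Eb=(E_{\epsilon,\eta})$ is defeated by a model $\cM\models T$ — extracted from the family witnessing non-$[\delta,\delta]$-compactness — and a sampling~$\eta$ for which the values $s(\overline{\alpha})^\cM$, $\alpha\in E_{\epsilon,\eta}$, cluster too far inside the $\lhd$-initial segment to witness $[\epsilon,\eta]$-metastability. The substantive difficulty lies entirely here: $T$ must simultaneously remain satisfiable and uniform (so that the hypotheses of~(1) and~(3) are non-vacuous), force genuine Cauchy metric behaviour in every model, and preserve enough of the Makowsky--Shelah cofinality obstruction to refute every candidate uniform metastability rate. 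The parameter-dependent relativizations furnished by regularity of~$\sL$ (Definition~\ref{D:regular-logic}) are the essential tool binding the cofinality on~$P$ to the Cauchy structure on the auxiliary sort.
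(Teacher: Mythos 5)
Your implications $(1)\Rightarrow(2)$ and $(2)\Rightarrow(3)$ are correct and coincide with the paper's argument: the logic topology on $\Mod_{\sL}(T)$ together with Proposition~\ref{P: Ptwise Cauchy and unif metastability}, and then the self-distance net $\dd(\tb,\tb)$ together with an explicit majorization and Proposition~\ref{prop:pointed-uni-meta-implies-uni-meta}.

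The gap is in $(3)\Rightarrow(1)$: you announce a ``plan'' but do not carry out the construction, and the plan as described points in a somewhat wrong direction. The actual construction is more elementary than what you sketch and needs no auxiliary sort, no ``prescribed explicit Cauchy rate,'' and no parameter-dependent relativizations at this stage. Starting from the satisfiable uniform theory $T$ extending $T_\mu$ (for a regular $\mu\le\kappa$) in all of whose models $(\overline{\alpha}^{\cM}:\alpha<\mu)$ is cofinal in $(P^{\cM},\lhd^{\cM})$, one adds a unary function symbol $f$ and constants $c_\alpha$ ($\alpha<\mu$) with axioms saying that $f$ takes values in $\{\overline{0},\overline{1}\}$, is $\overline{0}$ off $P$, is eventually $\overline{0}$ along $(P,\lhd)$, and satisfies $\dd(c_\alpha,f(\overline{\alpha}))=0$. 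Cofinality of the $\overline{\alpha}$'s forces $\cb^{\cM'}=(c_\alpha^{\cM'}:\alpha<\mu)$ to be eventually zero, hence Cauchy, in every model $\cM'$; conversely, because $(\mu,<)$ is well-ordered, \emph{every} eventually-zero binary $\mu$-sequence is realized as $\cb^{\cM'}$ for some expansion $\cM'$ of one fixed model of $T$. The realized nets are therefore exactly the family $C$ of Remarks~\ref{rem:metastability}(5), which is Cauchy but not uniformly metastable; that failure is a combinatorial fact about the family as a whole (for any finite $S\subseteq\mu$ there is a sampling and a member of $C$ defeating $S$), not a per-model ``clustering far inside the $\lhd$-initial segment'' argument. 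Two attributions in your sketch are off: the defeating models are expansions of a single model of $T$, not members of the family witnessing non-$[\mu,\mu]$-compactness (that family is consumed in producing $T$ via Theorem~\ref{thm:k-k-cpct-cof}); and regularity of $\sL$ is used only inside Theorem~\ref{thm:k-k-cpct-cof}, while the augmentation uses only discrete predicates, ordinary quantifiers, and the basic connectives. Finally, the uniformity of the augmented theory does need to be checked (the paper does so by noting that discreteness of $P$ yields a modulus of uniform continuity for $f$). As written, your proposal does not establish $(3)\Rightarrow(1)$.
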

\begin{proof}
$(1)\Rightarrow(2)$:\quad
Assume that $\sL$ is $[\kappa,\omega]$-compact.
Fix a directed set $(\DD,\preceq)$ of cardinality at most~$\kappa$, let $\Pb$ be a $\DD$-net of $\sL$-sentences, and let $T$ be a uniform $L$-theory of $\sL$ such that $\Pb$ is convergent modulo~$T$. 
The space $X = \Mod_{\sL}(T)$ of models of~$T$ carries the logic topology whose sub-basic closed sets are those of the form $\Mod_{\sL}(\psi) = \{\cM\in X : \cM\models\psi\}$, where $\psi$ is an $L$-sentence.
Since $\sL$ is closed under the basic connectives, each mapping $\overline{\psi} : X\to[0,1]$ given by $\cM\mapsto\psi^{\cM}$ (the truth value of~$\psi$ in~$\cM$) is continuous because, for any closed sub-basic $[a,b]\subseteq [0,1]$, we have
\[
  \{ \cM \in X : \psi^{\cM} \in \lbrack a,b] \}
  = \Mod_{\sL}(a \leq \psi \wedge \psi \leq b).
\]
Thus,
$\Pb$ is identified with the $\DD$-net $(\overline{\varphi_i} : i\in\DD)$ of continuous functions $X\to[0,1]$.
Assertion~(2) now follows from Proposition~\ref{P: Ptwise Cauchy and unif metastability}.

$(2)\Rightarrow(3)$:\quad
Assume that the Uniform Metastability Principle~(2) holds in~$\sL$ for directed sets of cardinality at most~$\kappa$.
Let $L,\tb,T$ satisfy the hypotheses of~(3) for some directed set~$(\DD,\preceq)$ of cardinality at most~$\kappa$.
Fix an explicit majorization~$\ovee$ for~$\DD$.
Since $\DD$ is infinite, $\card(\DD\times\DD) = \card(\DD) \le \kappa$.
Let $\varphi_{ij}$ be the formula $\dd(t_i,t_j)$ for $(i,j)\in\DD\times\DD$.
The hypothesis that $\tb$ is Cauchy modulo~$T$ implies that $\Pb$ converges to~$0$ modulo~$T$.
By assumption~(2) (the pointed UMP), $\Pb$ is uniformly metastable near~$0$ modulo~$T$;
thus, the $(\DD\times\DD)$-nets $\dd(\tb^{\cM},\tb^{\cM})$ all admit a uniform rate $\Eb = (E_{\epsilon,\zeta})$ of metastability near zero as $\cM$ varies over models of~$T$.
By Proposition~\ref{prop:pointed-uni-meta-implies-uni-meta}, $\tb$ admits the uniform rate of metastability~$\Eb^{\ovee}$ modulo~$T$.
This proves that $\Pb$ is $\Eb$-uniformly metastable modulo~$T$.

$(3)\Rightarrow(1)$:\quad
By contraposition, assume that $\sL$ is not $[\kappa,\omega]$-compact.
Then $\sL$ is not $[\mu,\mu]$-compact for some regular $\mu\le\kappa$, by the last of Remarks~\ref{R:countable compactness of logics}.
Let $T$ be a uniform $L$-theory extending $T_{\mu}$ in part~(2) of Theorem~\ref{thm:k-k-cpct-cof} such that $(\overline{\alpha}^{\cM}:\alpha<\mu)$ is cofinal in $(P^{\cM},\lhd^{\cM})$ for all models~$\cM$ of~$T$.
Expand $L$ to a vocabulary $L'$ having a unary function symbol $f$ and a family $\cb = (c_{\alpha} : \alpha<\mu$) of constants not used by the theory~$T$.
Let $T'$ be the union of $T$ and the axioms:
\begin{subequations}
  \begin{align}
    &\forall x\bigl(\dd(f(x),\overline{0})=0\vee \dd(f(x),\overline{1})=0\bigr),\label{eq:axiom1}\\
    &\forall x\bigl(\lnot P(x)\rightarrow \dd(f(x),\overline{0})=0\bigr),\label{eq:axiom2}\\
    &(\exists x\in P)(\forall y\in P)\bigl(x<y\rightarrow \dd(f(y),\overline{0})=0\bigr),\label{eq:axiom3}\\
    &\dd(c_{\alpha },f(\overline{\alpha }))=0,\qquad
      \text{for each $\alpha <\mu$.}\label{eq:axiom4}
  \end{align}
\end{subequations}
$T'$ is a uniform~$L'$-theory since $T$ (hence~$T'$) implies a modulus of uniform continuity for interpretations of~$P$ by hypothesis (thus, since $P$ is discrete, $\{x : P(x)=0\}$ is uniformly separated from $\{x : P(x)=1\}$), and clearly~$T'$ implies the same modulus of uniform continuity for interpretations of~$f$.
Note that if  $\cM'$ is any model of~$T'$, then the $\mu$-sequence~$\cb^{\cM'} = (c_{\alpha}^{\cM}:\alpha<\mu)$ is eventually zero, by axioms~\eqref{eq:axiom3} and~\eqref{eq:axiom4} above.

Consider an arbitrary eventually-zero $\mu$-sequence $\bb = (b_{\alpha}:\alpha<\mu)$ in~$\{\overline{0},\overline{1}\}$.
Since $(\mu,<)$ is well ordered, there is a least $\gamma<\mu$ such that $b_{\alpha}=0$ for all $\alpha\ge\gamma$.
Since $T$ uses neither~$f$ nor the constants~$c_\alpha$, every model~$\cM$ of~$T$ admits an expansion to a model $\cM' = \cM'_{\bb}$ of~$T'$ as follows.
Let $c_\alpha^{\cM'} = b_{\alpha}$ for $\alpha<\mu$, and let $f^{\cM'}$ be the following function $M \to\{\overline{0},\overline{1}\}$ that takes the value~$\overline{0}$ outside~$P^{\cM}$ and, for $x\in P^{\cM}$,
\begin{itemize}
\item $f(x) = b_{\alpha}$, if $x = \overline{\alpha}^{\cM}$ for some~$\alpha<\mu$,
\item $f(x) = \overline{0}$, if $x\in P^{\cM}$ and $x\ge\overline{\gamma}^{\cM}$,
\item $f(x) = \overline{1}$, for all other elements $x\in P^{\cM}$.
\end{itemize}

Conversely, in any model~$\cM'$ of~$T'$ the map~$f^{\cM'}$ is $\{\bar 0,\bar1\}$-valued and eventually zero in $(P^{\cM'},\lhd^{\cM'})$, by axioms~\eqref{eq:axiom1} and~\eqref{eq:axiom3} above.
Since $(\overline{\alpha}^{\cM'}:\alpha<\mu)$ is cofinal in~$P^{\cM'}$, axiom~\eqref{eq:axiom4} implies that the binary sequence~$\cb^{\cM'}$ is also eventually zero.
In sum, when $\cM'$ runs through all models of~$T'$, the sequence $\cb^{\cM'}$ runs through the family~$C$, introduced in Remarks~\ref{rem:metastability}, of all binary sequences of length~$\mu$ with limit zero.
The family~$C$ is Cauchy but not uniformly metastable, so $\cb$ is Cauchy but not uniformly metastable modulo~$T'$.
\end{proof}

Theorem~\ref{thm:cpct-metastab} characterizes $[\kappa,\omega]$-compactness.
We now observe that a slight modification of its proof yields an analogous characterization of the weaker property of $(\kappa,\omega)$-compactness (see Remark~\ref{rem:contrast-countable-compactnesses}):

\begin{theorem}\label{thm:cpct-metastab-count}
    Let $\sL$ be a regular logic for metric spaces.
  The following properties are equivalent for any infinite cardinal~$\kappa$:
\begin{enumerate}%[\normalfont (1)]
\item $\sL$ is $(\kappa,\omega)$-compact.
\item
As in (2) of Theorem~\ref{thm:cpct-metastab}, with the additional condition that the cardinality of $T$ is at most~$\kappa$.
\item
As in (3) of Theorem~\ref{thm:cpct-metastab}, with the additional condition  that the cardinality of $T$ is at most~$\kappa$.
\end{enumerate}
\end{theorem}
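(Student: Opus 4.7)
The proof will follow the cycle $(1) \Rightarrow (2) \Rightarrow (3) \Rightarrow (1)$ of Theorem~\ref{thm:cpct-metastab}, systematically tracking cardinalities so that all theories involved remain of size at most~$\kappa$. The first two implications are routine adaptations, while the third requires more care.

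For $(1) \Rightarrow (2)$, the topological argument through Proposition~\ref{P: Ptwise Cauchy and unif metastability} carries over essentially verbatim. Given a directed set $\DD$ with $|\DD| \le \kappa$, a $\DD$-net $\Pb$ of $L$-sentences (at most~$\kappa$ in total), and a uniform theory $T$ with $|T| \le \kappa$ modulo which $\Pb$ converges, the sub-basic closed sets $C_k \subseteq X = \Mod_\sL(T)$ produced in the proof of Proposition~\ref{P: Ptwise Cauchy and unif metastability} are each cut out (relative to~$X$) by a single $L$-sentence built from finitely many of the~$\varphi_i$ and Pavelka constants. The theory needed to pass from FIP to nonempty intersection of the family $(C_k)_{k \in \DD}$ is~$T$ together with the $|\DD| \le \kappa$ defining sentences, for a total cardinality at most~$\kappa$; hence $(\kappa,\omega)$-compactness of~$\sL$ suffices to complete the topological argument.

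The implication $(2) \Rightarrow (3)$ transfers from Theorem~\ref{thm:cpct-metastab}: given terms $\tb = (t_i : i\in\DD)$ with $|\DD| \le \kappa$ and uniform~$T$ with $|T| \le \kappa$, form the $(\DD \times \DD)$-net of distance sentences $\varphi_{ij} = \dd(t_i, t_j)$. Since $|\DD \times \DD| = |\DD| \le \kappa$ and $|T| \le \kappa$, assertion~(2) applied to $(\varphi_{ij})$ yields pointed uniform metastability near~$0$ modulo~$T$, which Proposition~\ref{prop:pointed-uni-meta-implies-uni-meta} converts into uniform metastability of~$\tb$.

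The implication $(3) \Rightarrow (1)$ is where the main obstacle lies. Arguing by contraposition, suppose $\sL$ is not $(\kappa,\omega)$-compact: there is a uniform theory of size at most~$\kappa$ that is finitely satisfiable but unsatisfiable. Let $\mu \le \kappa$ be the minimum cardinality of an unsatisfiable subtheory; then $\mu > \omega$, and a subtheory of size~$\mu$ witnesses failure of $(\mu,\mu)$-compactness (every proper subset being satisfiable by minimality of~$\mu$). Applying Theorem~\ref{thm:weak-k-k-cpct-cof} in place of Theorem~\ref{thm:k-k-cpct-cof}---with $\mu$ in the role of the ambient cardinal---yields a uniform theory $T_0$ of cardinality~$\mu$ extending~$T_\mu$ whose every model has $(\bar\alpha^\cM : \alpha < \mu)$ cofinal in $(P^\cM, \lhd^\cM)$. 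Expanding $T_0$ to a theory~$T'$ by adjoining a unary function symbol~$f$, constants $\cb = (c_\alpha : \alpha<\mu)$, and the axioms \eqref{eq:axiom1}--\eqref{eq:axiom4} from the proof of Theorem~\ref{thm:cpct-metastab} produces a uniform theory of cardinality at most~$\mu \le \kappa$, modulo which the net~$\cb$ is Cauchy---each sequence $\cb^{\cM'}$ is eventually zero by axioms~\eqref{eq:axiom3}--\eqref{eq:axiom4}---yet traces the family~$C$ of Remarks~\ref{rem:metastability} and is therefore not uniformly metastable. The main subtlety is that Theorem~\ref{thm:weak-k-k-cpct-cof} is stated for regular ambient cardinals, while the minimal~$\mu$ need not a priori be regular; one must verify that the proof of Theorem~\ref{thm:weak-k-k-cpct-cof} goes through for arbitrary~$\mu$, which it does because the auxiliary models $\cM_\alpha$ are built from initial segments $\{\varphi_\beta : \beta < \alpha\}$ of cardinality $|\alpha| < \mu$, automatically satisfiable by minimality of~$\mu$ and forming an increasing chain without any regularity assumption.
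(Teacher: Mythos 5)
Your steps $(2)\Rightarrow(3)$ and $(3)\Rightarrow(1)$ are essentially the paper's. In $(3)\Rightarrow(1)$ you replace the paper's reduction (take $\mu$ least such that $\sL$ is not $(\mu,\omega)$-compact, which the paper asserts is regular and witnesses failure of $(\mu,\mu)$-compactness) by taking $\mu$ to be the least cardinality of an unsatisfiable subtheory of a fixed witness, and you correctly observe that the direction of Theorem~\ref{thm:weak-k-k-cpct-cof} you actually use never needs regularity of the cardinal: the auxiliary models only have to realize initial segments $\{\varphi_\beta : \beta<\alpha\}$, satisfiable by minimality. That is an acceptable variant (your aside that $\mu>\omega$ can fail, harmlessly). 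The genuine gap is in $(1)\Rightarrow(2)$. In the proof of Proposition~\ref{P: Ptwise Cauchy and unif metastability}, the sets whose intersection is needed for the \emph{pointed} statement are of the form $C_k=\{\cM : \max_{i\in\eta_k}|\varphi_i^{\cM}-g(\cM)|\ge\epsilon\}$, where $g(\cM)=\lim_i\varphi_i^{\cM}$ is the pointwise limit of the truth values. These are \emph{not} cut out by single $L$-sentences built from finitely many of the $\varphi_i$ and Pavelka constants: the limit $g$ is not the truth value of any sentence, and $\cM\mapsto g(\cM)$ is in general not even continuous on $\Mod_{\sL}(T)$ (think of a theory forcing $(\varphi_i^{\cM})_i$ to be a non-increasing $\{0,1\}$-valued sequence: the limit jumps at the model where all values are $1$). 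So the ``theory consisting of $T$ together with the $\le\kappa$ defining sentences'' that you feed to $(\kappa,\omega)$-compactness does not exist, and your argument breaks precisely where it departs from the paper. The sets that \emph{are} definable by single sentences are the oscillation sets $\{\cM : \max_{i,j\in\eta_k}|\varphi_i^{\cM}-\varphi_j^{\cM}|\ge\epsilon\}$, but intersecting those delivers only the unpointed principle, i.e.\ a direct route to statement~(3), not to statement~(2).

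The paper's $(1)\Rightarrow(2)$ avoids this by not re-running the proof of Proposition~\ref{P: Ptwise Cauchy and unif metastability} sentence by sentence. It equips $X=\Mod_{\sL}(T)$ with the coarser initial topology generated by the maps $\cM\mapsto\varphi_i^{\cM}$ alone; the subbasic closed sets $\{\cM : \varphi_i^{\cM}\in[a,b]\}$ (with $a,b$ rational) \emph{are} each given by a single sentence, a subbasic family with the finite intersection property together with $T$ is a single theory of cardinality at most~$\kappa$, and $(\kappa,\omega)$-compactness plus Alexander's subbase lemma then gives compactness of~$X$ in this topology. Proposition~\ref{P: Ptwise Cauchy and unif metastability} is afterwards invoked as a black box for the compact space~$X$, so the question of which sets its proof intersects (and their closedness) stays internal to that proposition. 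To repair your argument, prove compactness of~$X$ in this initial topology from subbasic closed sets, as the paper does, rather than asserting single-sentence definability of the~$C_k$.
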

\begin{proof}
  $(1)\Rightarrow(2)$:\quad
  Let $X = \Mod_{\sL}(T)$,
  and for each $L$-sentence $\psi$, let $\overline{\psi} : X\to[0,1]$ be the mapping given by $\cM\mapsto\psi^{\cM}$.
  Give $X$ the initial topology making all mappings~$\overline{\varphi_i} : X\to[0,1]$ ($i\in\DD$) continuous (which is coarser than that in the proof of Theorem~\ref{thm:cpct-metastab}).
  The collection of all the classes $\{\cM\in X : \varphi_i^{\cM} \in [a,b]\}$ for $i\in\DD$ and~$a, b$ rational with $0\le a\le b\le 1$ is a subbasis of cardinality~$\le\kappa$ for the closed sets of this topology.
  Since~ $|T|\le\kappa$, the $(\kappa,\omega)$-compactness of~$\sL$ implies that every countable collection of $L$-sentences of cardinality~$\le\kappa$ that is finitely jointly satisfiable with~$T$ is jointly satisfiable with~$T$.
  By considering collections of sentences of the form ``$a\le\varphi_i\le b$'', we see that any family of subbasic closed sets having the finite-intersection property has nonempty intersection;
  this suffices to conclude that $X$ is compact.
  Assertion~(2) now follows from Proposition~\ref{P: Ptwise Cauchy and unif metastability} as in the proof of Theorem~\ref{thm:cpct-metastab}.

  $(2)\Rightarrow(3)$:\quad
  Identical to the corresponding part of the proof of Theorem~\ref{thm:cpct-metastab}.

  $(3)\Rightarrow(1)$:\quad
  Assume that $\sL$ is regular and not $(\kappa,\omega)$-compact, and let $\mu$ be least such that $\sL$ is not $(\mu,\omega)$-compact.
  It is easy to see that $\sL$ is not $(\mu,\mu)$-compact and $\mu$ is regular.
  By Theorem~\ref{thm:weak-k-k-cpct-cof} there is a satisfiable theory~$T$ extending~$T_{\mu}$ with $|T|\le\mu$ in all whose models~$\cM$ the $\mu$-sequence $\cb^{\cM}$ is cofinal in~$(P^{\cM},\lhd^{\cM})$.
  The rest of the proof is identical to the corresponding one in Theorem~\ref{thm:cpct-metastab}.
\end{proof}

\begin{remark}
In Theorems~\ref{thm:cpct-metastab} and \ref{thm:cpct-metastab-count}, the regularity assumption on the logic is needed only  in $(3)\Rightarrow(1)$ to invoke Theorem~\ref{thm:k-k-cpct-cof}.
The other implications hold for arbitrary logics. The forward direction of both theorems, namely, $(1)\Rightarrow(2)$, it sufficed to invoke the topological version of the UMP 
(Proposition~\ref{P: Ptwise Cauchy and unif metastability}).
However, for the converse, we could not rely solely on topology because topological spaces that arise from logics need not be paracompact (see~\cite{Caicedo:1993} for examples).
\end{remark}

\section{Compactness and \RPCd-characterizability of general structures}
\label{S:characterizability}

If $L,L'$ are vocabularies with $L\subseteq L'$ and ${\cM}$ is an $L'$-structure, the $L$-reduct of ${\cM}$ will be denoted $\mathcal M\upharpoonright L$. 
We will use the restriction symbol $\upharpoonright$ with a second meaning: 
If ${\cM}$ is an $L$-structure and $A$ is a subset of the universe of ${\cM}$ that is $L$-closed, i.e., for every $n<\omega$, the set $A^n$ is closed under all the $n$-ary functions of ${\cM}$, then $\mathcal M\upharpoonright  A$ will denote the substructure of ${\cM}$ that results from restricting, for every $n<\omega$, all the $n$-ary functions and predicates of ${\cM}$ to~$A$.

Definitions \ref{D:PC} and \ref{D:RPC} below are classical.

\begin{definition}
\label{D:PC}
Let ${\sL}$ be a logic for metric structures and let $L$ be a vocabulary.  
A class $\mathscr{C}$ of $L$-structures is said to be a \emph{projective class} (or \emph{PC}) in ${\sL}$ if there exists a vocabulary $L'\supseteq L$ and an ${\sL}$-elementary class $\mathscr{C}'$ of $L'$-structures such that
\[
\mathscr{C}=\{ \mathcal M\upharpoonright L : \mathcal M\in\mathscr{C}'\}.
\]
\end{definition}

Intuitively, a class is PC in $\cl$ if and only if it is definable by an existential second-order sentence.  
Thus, the concept of projective class can be seen as a generalization of this notion of definability to arbitrary logics.

\begin{definition}
\label{D:RPC}
Let ${\sL}$ be a logic for metric structures and let $L$ be a vocabulary. 
A class $\mathscr{C}$ of $L$-structures is a \emph{relativized projective class} (or \emph{RPC}) in ${\sL}$ if there exist a vocabulary $L'\supseteq L$, an ${\sL}$-elementary class $\mathscr{C}'$ of $L'$-structures, and a monadic predicate symbol $R$ of $L'$ such that $R$ is discrete in each structure $\mathcal{M}\in\mathscr{C}'$, and
\[
\mathscr{C} = \{ (\mathcal M\upharpoonright L)\upharpoonright R^{\cM} : \text{$\mathcal M\in\mathscr{C}'$ and $R$ is $L$-closed in ${\cM}$} \}.
\]

The notions of {\PCd} and \RPCd\ are defined by replacing ``${\sL}$-elementary'' with ``${\sL}$-axiomatizable'' in the definitions of PC and RPC, respectively.
\end{definition}

\begin{proposition}
\label{prop:cpct-implies-nonrpc}
If~$\kappa$ is a regular cardinal and $\sL$ is $[\kappa,\kappa]$-compact logic for metric structures, then the structure $(\kappa,<)$ is not \RPCd in~${\sL}$.
\end{proposition}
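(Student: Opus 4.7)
My plan is to argue by contradiction. Suppose $(\kappa,<)$ is \RPCd\ in $\sL$; then there exist a vocabulary $L' \supseteq \{<\}$, a monadic predicate symbol $R \in L'$, and a theory $T$ of $\sL$ in $L'$ such that $R$ is discrete in each model $\cM$ of $T$ and the relativized reduct $(\cM \upharpoonright \{<\}) \upharpoonright R^{\cM}$ is isomorphic to $(\kappa,<)$. Before I can invoke $[\kappa,\kappa]$-compactness I need $T$ to be uniform; if it isn't, I will replace $T$ by the complete $\sL$-theory of a single fixed model $\cM_0$ of $T$. This larger theory still forces every model to have the prescribed relativized reduct (since it contains~$T$), and it is automatically uniform, because the $\sL$-sentences asserting a concrete modulus of uniform continuity for each function symbol of $\cM_0$ belong to~it.

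Next I would augment $L'$ by fresh constants $\{c_{\alpha} : \alpha<\kappa\} \cup \{c\}$ and, for each $\alpha<\kappa$, consider the theory
\[
T_{\alpha} \;=\; T \,\cup\, \{R(c_{\beta}) : \beta\le\alpha\} \,\cup\, \{R(c)\} \,\cup\, \{c_{\beta} < c_{\gamma} : \beta<\gamma\le\alpha\} \,\cup\, \{c_{\beta} < c : \beta\le\alpha\}.
\]
The family $\{T_{\alpha}\}_{\alpha<\kappa}$ has exactly $\kappa$ elements and is still uniform, since only new constant symbols are introduced and no new moduli of continuity are required. Each $T_{\alpha}$ is satisfiable on its own: fixing an isomorphism $\phi:(\cM_0 \upharpoonright \{<\})\upharpoonright R^{\cM_0} \to (\kappa,<)$, I interpret $c_{\beta}^{\cM_0}:=\phi^{-1}(\beta)$ for $\beta\le\alpha$ and $c^{\cM_0}:=\phi^{-1}(\alpha+1)$. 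Regularity of $\kappa$ ensures that any subfamily of $\{T_{\alpha}\}_{\alpha<\kappa}$ of cardinality less than $\kappa$ mentions only ordinals below some $\alpha_0<\kappa$, so its union is contained in $T_{\alpha_0}$ and hence satisfiable.

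By $[\kappa,\kappa]$-compactness the entire union $\bigcup_{\alpha<\kappa} T_{\alpha}$ must therefore have a model~$\cM^*$. Let $\phi^* : (\cM^* \upharpoonright \{<\}) \upharpoonright R^{\cM^*} \to (\kappa,<)$ be an isomorphism. The map $\alpha \mapsto \phi^*(c_{\alpha}^{\cM^*})$ is strictly increasing, hence an injective $\kappa$-sequence in $\kappa$; since $\kappa$ is regular, its image is cofinal in~$\kappa$. However, $\phi^*(c^{\cM^*})\in\kappa$ strictly dominates each $\phi^*(c_{\alpha}^{\cM^*})$, contradicting cofinality.

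The only real obstacle I foresee is ensuring uniformity of the witnessing theory---the definition of \RPCd\ does not impose this---which is why the preliminary step of passing to the complete theory of a single model is essential. Everything else is organized bookkeeping built around the two uses of the regularity of~$\kappa$: first to keep subfamily indices bounded below $\kappa$, and then to force the injective $\kappa$-sequence $(\phi^*(c_{\alpha}^{\cM^*}))_{\alpha<\kappa}$ to be cofinal in $(\kappa,<)$.
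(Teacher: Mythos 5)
Your proof is correct and takes essentially the same route as the paper: the paper simply invokes Theorem~\ref{thm:k-k-cpct-cof}, whose forward direction $(1)\Rightarrow(2)$ is exactly your compactness argument (an increasing chain of theories adjoining a constant $c$ with $c_\beta < c$, whose union is satisfiable by $[\kappa,\kappa]$-compactness and yields a non-cofinal copy of $(\kappa,<)$ inside $R$, contradicting regularity). Your explicit step of passing to the complete theory of a fixed model to secure uniformity of the witnessing theory is a detail the paper leaves implicit, and is handled correctly.
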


\begin{proof}
Assume that $\sL$ is $[\kappa,\kappa]$-compact. By Theorem~\ref{thm:k-k-cpct-cof}, any \RPCd
definition of  $(\kappa,<)$ via discrete predicate symbols $P$ and $\lhd$ must have a model $\cM$ such that $(\kappa,<)$ embeds non-cofinally into $(P^{\cM}, \lhd^{\cM})$.
Hence, $(P^{\cM}, \lhd^{\cM})$ has a proper initial segment of cardinality at least $\kappa$.
This prevents $(P^{\cM}, \lhd^{\cM})\cong (\kappa,<)$.
\end{proof}

\begin{proposition}
\label{prop:nonrpc-implies-cpct}
Let ${\sL}$ be a regular logic for metric structures.
If the structure $(\omega,<)$ is non \RPCd\ in~${\sL}$, then $\sL$ is $[\omega,\omega]$-compact.
\end{proposition}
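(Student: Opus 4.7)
The plan is to prove the contrapositive: assume $\sL$ is regular but not $[\omega,\omega]$-compact, and exhibit $(\omega,<)$ as an \RPCd\ class in~$\sL$.

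First, I would apply the implication $(2)\Rightarrow(1)$ of Theorem~\ref{thm:k-k-cpct-cof} contrapositively (with $\kappa=\omega$). This gives a satisfiable uniform theory~$T$ in a vocabulary~$L'$ containing a monadic predicate symbol~$P$, a binary predicate symbol (which I relabel as~$<$ in place of~$\lhd$), and constants $(\overline{n}:n<\omega)$, such that $T$ extends~$T_\omega$ and in every model~$\cM$ of~$T$ the sequence $(\overline{n}^{\cM}:n<\omega)$ is \emph{cofinal} in~$(P^{\cM},<^{\cM})$. So far this only guarantees cofinality; the rest of~$P^{\cM}$ could \emph{a priori} contain extra elements below~$\overline{0}^{\cM}$ or strictly between consecutive~$\overline{n}^{\cM}$'s.

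To force $(P^{\cM'},<^{\cM'})\cong(\omega,<)$ in every model, I would augment $T$ with the following sentences, each expressible in any logic for metric structures using the basic connectives, the Pavelka constant~$0$, and the classical quantifiers:
\begin{itemize}
\item $\forall x\bigl(P(x)\rightarrow(\dd(x,\overline{0})=0\vee \overline{0}<x)\bigr)$, asserting that $\overline{0}$ is the $<$-minimum of~$P$;
\item for each $n<\omega$, the sentence $\forall x\bigl(P(x)\wedge \overline{n}<x\rightarrow(\dd(x,\overline{n+1})=0\vee \overline{n+1}<x)\bigr)$, asserting that $\overline{n+1}$ is the immediate $<$-successor of~$\overline{n}$ in~$P$ (modulo the metric identification $\dd=0$).
\end{itemize}
Call the resulting theory~$T^{*}$. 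In any model~$\cM'$ of~$T^{*}$, an induction on~$n$ (using the cofinality of the~$\overline{n}^{\cM'}$'s together with linearity of~$<^{\cM'}$ on~$P^{\cM'}$ and the added axioms above) shows that every element of~$P^{\cM'}$ is $\dd$-equal to some~$\overline{n}^{\cM'}$; hence $(P^{\cM'},<^{\cM'})\cong(\omega,<)$.

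Finally, setting $L=\{<\}$, using $P$ as the monadic relativization predicate~$R$, and taking $\mathscr{C}'=\mathrm{Mod}_{\sL}(T^{*})$, this yields
\[
\{(\cM'\upharpoonright L)\upharpoonright P^{\cM'}:\cM'\models T^{*}\}=\{(\omega,<)\}
\]
up to isomorphism, exhibiting $(\omega,<)$ as \RPCd\ in~$\sL$. The main obstacle I foresee is verifying that the trimming axioms do not destroy satisfiability of~$T^{*}$; this works because the concrete model constructed in the proof of Theorem~\ref{thm:k-k-cpct-cof} already has $P^{\cM}=\omega$ with its natural order, so the added axioms are automatically satisfied there. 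It is this use of the \emph{specific} model built in the Makowsky--Shelah-style construction (rather than merely the abstract cofinality statement) that makes the argument go through.
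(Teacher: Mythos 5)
Your proof is correct, and it follows the same overall skeleton as the paper's: argue the contrapositive and use the $(2)\Rightarrow(1)$ direction of Theorem~\ref{thm:k-k-cpct-cof} (with $\kappa=\omega$) to produce a satisfiable uniform theory $T\supseteq T_\omega$ in all of whose models the distinguished constants are cofinal in $(P,\lhd)$. Where you genuinely diverge is in how $(\omega,<)$ is then carved out. The paper leaves $T$ alone, adjoins a \emph{fresh} discrete monadic predicate $R$ with axioms saying that $R\subseteq P$, that $R$ holds of each constant, and that $R$ avoids the open intervals between consecutive constants, and then relativizes to $R$; the payoff is that satisfiability of the augmented theory is automatic, because \emph{any} model of $T$ expands to a model of the augmented theory by interpreting $R$ as the set of interpretations of the constants, so Theorem~\ref{thm:k-k-cpct-cof} is used strictly as a black box. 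You instead add axioms forcing $(P,\lhd)$ itself to have order type $\omega$ (a minimum plus immediate successors, combined with cofinality), and, as you correctly identify, you must then justify satisfiability of $T^{*}$ by reopening the proof of Theorem~\ref{thm:k-k-cpct-cof} and observing that the model built there has $P^{\cM}=\omega$ on the nose. That is legitimate --- the proof does establish this stronger existence statement --- but it costs modularity: citing only the \emph{statement} of Theorem~\ref{thm:k-k-cpct-cof}, your route does not go through, since an arbitrary model of $T$ may have elements of $P$ below $\overline{0}$ or strictly between consecutive constants, and such a model cannot in general be trimmed while remaining a model of $T$. A small compensating virtue of your version is that your axioms explicitly control the initial segment below $\overline{0}$, a point the paper's $R$-axioms leave implicit.
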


\begin{proof}

Assume that $\sL$ is not $[\omega,\omega]$-compact.
By Theorem~\ref{thm:k-k-cpct-cof},
there exists an $\sL$-theory $T$ in a vocabulary $L$ containing
a monadic predicate symbol $P$, a binary predicate symbol $\lhd$, and a family $(c_{n})_{n<\omega}$ of constant symbols
such that if $\cM=(M,P^{\cM},<,c_n^{\cM} \dots)_{n<\omega}$ is a model of $T$, then $P^{\cM}$ is discrete, $(P^{\cM},<)$ is a discrete linear order and $(c_n^{\cM})_{n<\omega}$ is a cofinal sequence in $(P^{\cM},<)$.
 Let $L'$ be an expansion of $L$ that  contains a new monadic predicate symbol $R$
and let $T'$ be the $L'$-theory that consists of ~$T$ plus the following sentences:
\begin{itemize}
\item
$\Discrete(R)$,
 \item
 $\forall \bar{x}(R(\bar{x}) \to P(\bar{x}))$,
 \item
 $R(c_n)$, for each $n<\omega$,
\item
$\forall x( (c_n\lhd x\lhd c_{n+1})\to \neg R(x))$, for each $n<\omega$.
 \end{itemize}
 If $\cN$ is a model of $T'$, then the restriction $(\cN\upharpoonright L)\upharpoonright R$ is isomorphic to $(\omega<)$.
\end{proof}

\begin{corollary}
\label{C:noncpct-rpc-omega}
The following conditions are equivalent for any regular logic ${\sL}$ for metric structures:
\begin{enumerate}%[\normalfont (1)]
\item ${\sL}$ is $[\omega,\omega]$-compact.
\item The structure $(\omega,<)$ is not \RPCd\ in~${\sL}$.
\end{enumerate}  
\end{corollary}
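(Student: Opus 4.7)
The corollary is the conjunction of the two propositions immediately preceding it, specialized to the countable case, so the proof is a straightforward assembly and little beyond bookkeeping is required. The plan is to verify that each direction falls out of exactly one of Propositions~\ref{prop:cpct-implies-nonrpc} and~\ref{prop:nonrpc-implies-cpct}, after noting that $\omega$ is a regular cardinal.

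For the implication $(1)\Rightarrow(2)$, I would invoke Proposition~\ref{prop:cpct-implies-nonrpc} with $\kappa=\omega$. Since $\omega$ is regular and $\sL$ is $[\omega,\omega]$-compact by hypothesis, that proposition directly tells us that $(\omega,<)$ is not \RPCd\ in $\sL$. No appeal to regularity of the logic is needed for this direction, which is consistent with the fact that Proposition~\ref{prop:cpct-implies-nonrpc} was stated without requiring $\sL$ to be regular.

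For the implication $(2)\Rightarrow(1)$, I would simply cite Proposition~\ref{prop:nonrpc-implies-cpct}, which asserts exactly this under the hypothesis that $\sL$ is regular (this is the standing assumption of the corollary). Its proof builds a standard \RPCd-representation of $(\omega,<)$ from a witness to the failure of $[\omega,\omega]$-compactness via Theorem~\ref{thm:k-k-cpct-cof}; this is where the regularity hypothesis on $\sL$ is used, since Theorem~\ref{thm:k-k-cpct-cof} requires it to obtain the reverse implication.

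There is no real obstacle here: the work has been done in the two preceding propositions. The only small point worth flagging is to ensure that both directions concern the \emph{same} cardinal, namely $\omega$, so that the forward direction genuinely outputs the failure of $(\omega,<)$ being \RPCd\ (rather than, say, $(\kappa,<)$ for some uncountable $\kappa$); this is why the specialization $\kappa=\omega$ in Proposition~\ref{prop:cpct-implies-nonrpc} is the right choice, and why the regularity of $\omega$ (trivially) suffices to satisfy the hypothesis of that proposition.
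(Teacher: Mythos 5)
Your proposal is correct and matches the paper's proof, which likewise obtains $(1)\Rightarrow(2)$ by specializing Proposition~\ref{prop:cpct-implies-nonrpc} to the regular cardinal $\kappa=\omega$ and $(2)\Rightarrow(1)$ directly from Proposition~\ref{prop:nonrpc-implies-cpct}. The paper simply records the corollary as immediate from these two propositions, so nothing further is needed.
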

\begin{proof}
Immediate from Propositions~\ref{prop:cpct-implies-nonrpc} and~\ref{prop:nonrpc-implies-cpct}.
\end{proof}

The goal of the rest of this section is to prove an analog of the preceding corollary with  $(\omega,<)$ replaced with an arbitrary infinite structure.

\begin{definition}
 If $\sL$ is a logic for metric structures, a structure $\mathcal M$ of $\sL$ with universe $M$ will be called \emph{full} if, for every $n<\omega$, every uniformly continuous function $M^n\to M$ 
 and every uniformly continuous predicate $M^n\to [0,1]$ is definable in~$\thry_{\sL}(\cM)$.
\end{definition}

The following lemma is a slight improvement of a theorem of Makowsky and Shelah~\cite[Theorem 1.1]{Makowsky-Shelah:1983}.

\begin{lemma}
\label{L:nonprincipal ultrafilter}
Let $\mathcal M$ be a discrete full structure with universe $M$, and let $\Hat{{\cM}}$ be an ${\sL}_{\omega\omega}$-extension of ${\cM}$.  
Let $P$ be a subset of~$M$ and assume that the natural extension $\Hat{P}$ of~$P$ to~$\Hat{\cM}$ is a proper superset of~$P$.
Fix $b\in\Hat{P}\setminus P$. 
Define
\[
\mathcal{U}=\{\, R\subseteq P : b\in\Hat{R}\setminus R\,\}.
\]
\begin{enumerate}%[\normalfont(1)]
\item
$\mathcal{U}$ is a nonprincipal ultrafilter over $P$.
\item
If the cardinality of $P$ is less than the first measurable cardinal, then $\Hat{Q}\setminus Q\neq \emptyset$ for every infinite $Q\subseteq P$.
\end{enumerate}
\end{lemma}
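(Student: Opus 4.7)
My plan is to prove both parts by exploiting the fullness of $\cM$ together with the $\sL_{\omega\omega}$-elementarity of $\widehat{\cM}$, augmented in part~(2) by the defining property of the first measurable cardinal.

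For part~(1), I would verify the nonprincipal-ultrafilter axioms directly. Since $\cM$ is discrete, every subset $R\subseteq M$ is a (trivially) uniformly continuous predicate and hence, by fullness, is representable by a symbol in the vocabulary of $\cM$, so $\widehat{R}$ is well-defined as its interpretation in $\widehat{\cM}$. I would then invoke $\sL_{\omega\omega}$-elementarity on the obvious first-order sentences to obtain $\widehat{R\cap R'} = \widehat{R}\cap \widehat{R'}$, $R\subseteq R' \Rightarrow \widehat{R}\subseteq \widehat{R'}$, $\widehat{P\setminus R} = \widehat{P}\setminus \widehat{R}$, and $\widehat{\{a\}} = \{a\}$ for $a\in P$. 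Together with the hypothesis $b\in\widehat{P}\setminus P$, these identities immediately yield closure under finite intersection and supersets (filter), the dichotomy $R\in\mathcal U$ iff $P\setminus R\notin\mathcal U$ (ultra), and that $\{a\}\notin\mathcal U$ for every $a\in P$ (nonprincipal, since $b\ne a$).

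For part~(2), the crux will be to show first that $\mathcal U$ is not $\sigma$-complete. I would argue by contradiction: if $\mathcal U$ were $\sigma$-complete, let $\kappa$ be its completeness degree, so $\omega<\kappa\le|P|$; a standard Tarski-style transfer along any partition $P = \bigsqcup_{\alpha<\kappa}R_\alpha$ witnessing the failure of $\kappa^+$-completeness (no $R_\alpha\in\mathcal U$) yields a $\kappa$-complete nonprincipal ultrafilter on $\kappa$, making $\kappa$ measurable with $\kappa\le|P|$, against the hypothesis. Having thus secured a countable partition $P = \bigsqcup_{n<\omega}R_n$ with each $R_n\notin\mathcal U$, I would fix an infinite $Q\subseteq P$, pick distinct $q_0,q_1,\dots\in Q$, and define $f:M\to M$ by $f(x)=q_n$ for $x\in R_n$ and $f(x)=q_0$ for $x\in M\setminus P$. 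By fullness, $f$ is representable in $\cM$'s vocabulary, and since the sentence $\forall x(P(x)\to Q(f(x)))$ holds in $\cM$, it holds in $\widehat{\cM}$, so $\widehat{f}(b)\in\widehat{Q}$. To conclude $\widehat{f}(b)\notin Q$ I would split into three cases: $\widehat{f}(b)=q_n$ with $n\ge 1$ would force $R_n\in\mathcal U$; $\widehat{f}(b)=q_0$ would force $b\in\widehat{R_0}\cup(\widehat{M}\setminus\widehat{P})$, hence $b\in\widehat{R_0}$ and $R_0\in\mathcal U$ (since $b\in\widehat{P}$); and $\widehat{f}(b)=c$ for $c\in Q\setminus\{q_n\}_{n<\omega}$ would require $f^{-1}(c)=\emptyset$, impossible. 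All cases are contradictions, so $\widehat{f}(b)\in\widehat{Q}\setminus Q$.

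The main obstacle, and the only step that goes beyond routine bookkeeping with elementarity, is the set-theoretic argument that $\mathcal U$ cannot be $\sigma$-complete: this relies on the classical fact that the completeness cardinal of any nonprincipal $\sigma$-complete ultrafilter is measurable. The rest is a careful but direct application of the fullness hypothesis combined with $\sL_{\omega\omega}$-elementarity; in particular, the reduction to the infinite $Q$ case via the function $f$ with fibers the $R_n$ is the key new ingredient beyond part~(1).
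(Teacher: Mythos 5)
Your part~(1) is essentially the paper's own argument: both verify the filter, ultrafilter, and nonprincipality properties by transferring the obvious Boolean identities (intersections, inclusions/complements, singletons) along the $\sL_{\omega\omega}$-elementary extension, with fullness supplying names for arbitrary subsets of~$P$. Your part~(2), however, takes a genuinely different route. The paper argues by contradiction from the assumption $\widehat{Q}=Q$: it codes an arbitrary $Q$-indexed family $(R_q)_{q\in Q}$ of members of $\mathcal{U}$ by one binary predicate $\xi(x,q)$, uses $\widehat{Q}=Q$ to transfer the first-order description of $\bigcap_{q\in Q}R_q$ (namely $\forall y(\psi(y)\to\xi(x,y))$), concludes that $\mathcal{U}$ is closed under $Q$-indexed intersections, hence countably complete and nonprincipal, and only then invokes the classical characterization of measurability to contradict the cardinality hypothesis. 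You run the measurability fact first: since $|P|$ is below the first measurable cardinal, $\mathcal{U}$ cannot be $\sigma$-complete (the completeness degree of a countably complete nonprincipal ultrafilter is measurable, via the Ulam--Tarski projection you sketch), which yields a countable partition $P=\bigsqcup_{n<\omega}R_n$ with no $R_n\in\mathcal{U}$; you then exhibit an explicit witness $\widehat{f}(b)\in\widehat{Q}\setminus Q$, where $f$ is the definable map collapsing $R_n$ to $q_n\in Q$, and your three-way case analysis ($q_n$ with $n\ge 1$; $q_0$, whose fiber also absorbs $M\setminus P$, handled via $b\in\widehat{P}$; and elements of $Q$ outside the range, excluded because empty fibers transfer) is complete. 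Both proofs lean on the same classical ultrafilter-completeness facts; the paper's version avoids the disjointification and case analysis by quantifying over all of $Q$ at once, while yours is more constructive, produces an explicit element of $\widehat{Q}\setminus Q$, and needs only countable data (a countable partition and countably many points of~$Q$) regardless of $|Q|$. So the proposal is correct; only the phrasing ``would require $f^{-1}(c)=\emptyset$, impossible'' is backwards --- $f^{-1}(c)$ \emph{is} empty, and it is $\widehat{f}(b)=c$ that is thereby impossible --- but the intended argument is sound.
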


\begin{proof}
  (1): The fact that $\mathcal{U}$ is an ultrafilter follows from the following assertions, which can be verified immediately:
  \begin{itemize}
  \item $b\in\Hat{P}$, but $b\notin\emptyset=\Hat{\emptyset}$,
  \item for all $R,S\subseteq P$:
    \begin{itemize}
    \item  $R\subseteq S$ if and only if $\Hat{R}\subseteq\Hat{S}$,
    \item $(R\cap S)\,{\Hat{}} = \Hat{R}\cap\Hat{S}$, and $(R\cup S)\,{\Hat{}}= \Hat{R} \cup \Hat{S}$.
  \end{itemize}
\end{itemize} 
To see that $\mathcal{U}$ is nonprincipal, notice that if $\mathcal{U}$ were generated by a singleton, say $R = \{a\}$ with $a\in M$, then clearly $\Hat{R} = \{a\} = R$, while by definition of $\mathcal{U}$ we would have $b\in \Hat{R}\setminus R = \emptyset$: A contradiction.

(2):
Fix an infinite $Q\subseteq P$ with $\card(Q) = \mu$ and assume $\Hat{Q} = Q$.  
We will show that this implies that $\mathcal{U}$ is a $\mu$-complete ultrafilter.  
By a well-known characterization of measurability, it will follow that the cardinality of $Q$ must be at least the first measurable cardinal.

Fix a cardinality-$\mu$ family $(R_q)_{q\in Q}$ of elements of $\mathcal{U}$, assumed indexed by~$Q$ without loss of generality.
For each $q\in Q$, let $\chi_q$ be the symbol for the characteristic function of~$R_q$;
similarly, let $\psi$ be the symbol for the characteristic function of~$Q$.

Let $\xi$ be the symbol for the characteristic function of the set $\{(x,q) : x\in R_q\} \subseteq M^2$.
Then ${\cM}$ and its elementary extension $\Hat{{\cM}}$ both satisfy the following sentences, for each $q\in Q$:
\begin{itemize}
\item
$\forall x \big(\chi_q(x)\rightarrow \xi(x,q)\big)$,
\item
$\forall x \Big(\forall y\big(\psi(y)\rightarrow \xi(x,y)\big)\rightarrow\chi_q(x)\Big)$;
\end{itemize}
hence,
\[
  \left\{ a\in \Hat{M} : a\in \bigcap_{q\in Q} \Hat{R}_q \right\}
    = \left\{ a\in \Hat{M} : \Hat{{\cM}}\models \forall y\,\xi(a, y) \right\}.
\]
Since $Q$ is definable, we have $\left(\bigcap_{q\in Q} R_q\right)\Hat{} =\bigcap_{q\in \Hat{Q}} \Hat{R}_q$.
On the other hand, since $Q=\Hat Q$ by hypothesis and $b\in \Hat{R}_q$ for each $q\in Q = \Hat{Q}$ by definition of~$\mathcal{U}$,  we have $b\in\bigcap_{q\in Q} \Hat{R}_q = \left(\bigcap_{q\in Q} R_q\right)\Hat{}$\,, i.e., $\bigcap_{q\in Q} R_q\in \mathcal{U}$.
\end{proof}

\begin{theorem}
\label{T:rpcdelta-characterizability of structures}
Let $\mathscr L$ be a regular logic for metric structures.
If $\mathscr L$ is not $[\omega,\omega]$-compact, then any complete structure~$\cM$ of cardinality strictly less than the first measurable cardinal is \RPCd-characterizable in~$\mathscr L$.
(If no measurable cardinal exists, the conclusion holds for all complete metric structures.)
\end{theorem}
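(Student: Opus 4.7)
Since $\sL$ is not $[\omega,\omega]$-compact, Corollary~\ref{C:noncpct-rpc-omega} provides an \RPCd-presentation of $(\omega,<)$ in~$\sL$: an $\sL$-theory~$T_\omega$ over a vocabulary with discrete symbols~$W$, $\lhd$ and constants~$(\bar n)_{n<\omega}$, whose models all satisfy $(W,\lhd)\cong(\omega,<)$ with the $\bar n$ enumerating~$W$ without non-standard elements. My plan is to lift this to an \RPCd-presentation of~$\cM$, combining it with Lemma~\ref{L:nonprincipal ultrafilter} to exclude proper elementary extensions of~$\cM$ inside the relativized universe; the exclusion will hinge on the non-existence of $\sigma$-complete nonprincipal ultrafilters on sets of cardinality less than the first measurable cardinal.

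First, expand~$\cM$ to a full $L^*$-structure~$\cM^*$ by adjoining a symbol for every uniformly continuous function $M^n\to M$ and every u.c.\ predicate $M^n\to[0,1]$. Fix a countably infinite subset $Q=\{q_n:n<\omega\}\subseteq M$, and (at the cost of further vocabulary expansion, arranging that the relativized universe be metrically isolated from the rest of the ambient model) treat~$Q$ as named by a discrete monadic predicate~$Q^*$. Now enlarge to~$L'\supseteq L^*$ by adding a monadic symbol~$R$, constants~$(c_a)_{a\in M}$, the vocabulary of~$T_\omega$, and a function symbol~$h$. The $L'$-theory~$T'$ consists of $T_\omega$; the relativization to~$R$ of the full $\sL$-elementary diagram of~$\cM^*$ written in the~$c_a$; axioms asserting that $R$ is discrete and closed under all operations of~$L^*$; the equations $d(c_a,c_b)=d^\cM(a,b)$ for all $a,b\in M$; the specifications $h(\bar n)=c_{q_n}$ for $n<\omega$, together with $Q^*(c_a)$ for $a\in Q$ and $\neg Q^*(c_a)$ for $a\in M\setminus Q$; and the surjectivity axiom $\forall x\in R\,(Q^*(x)\to \exists w\in W\ h(w)=x)$. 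The canonical expansion of~$\cM$ witnesses consistency of~$T'$.

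For any model~$\cN$ of~$T'$, the relativized elementary diagram makes $a\mapsto c_a^\cN$ an $\sL_{\omega\omega}$-elementary embedding $\cM^*\hookrightarrow \cN\restriction R$, which I identify with inclusion. Since $T_\omega$ forces $W^\cN=\{\bar n^\cN:n<\omega\}$, the surjectivity axiom together with $h(\bar n)=c_{q_n}$ yield $Q^{*\cN}=\{c_{q_n}^\cN:n<\omega\}$; under the identification $c_a^\cN\leftrightarrow a$, this reads $\Hat Q=Q$ in the notation of Lemma~\ref{L:nonprincipal ultrafilter}. Were $\cN\restriction R$ to properly extend~$\cM^*$, some $b\in(\cN\restriction R)\setminus M$ would yield by Lemma~\ref{L:nonprincipal ultrafilter}(1) a nonprincipal ultrafilter~$\mathcal{U}$ on~$M$; the calculation in the proof of Lemma~\ref{L:nonprincipal ultrafilter}(2), applied to the countably infinite~$Q$ with~$\Hat Q=Q$, would then force~$\mathcal{U}$ to be $\sigma$-complete, impossible since $|M|$ is less than the first measurable cardinal. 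Hence $\cN\restriction R=\cM^*$, and passing to the $L$-reduct gives $\cN\restriction R\cong\cM$, yielding the desired \RPCd-characterization.

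The principal technical obstacle is reconciling the ``discrete full'' hypothesis of Lemma~\ref{L:nonprincipal ultrafilter} with the metric setting of~$\cM^*$: in a metric structure only clopen subsets of~$M$ correspond to u.c.\ $\{0,1\}$-valued predicates, whereas the lemma's proof invokes characteristic functions of arbitrary subsets. I expect to address this by applying the lemma at the level of the discrete reduct~$\widetilde{\sL}$ and by introducing all subsets entering the argument (notably~$Q$) via discrete predicates in the \RPCd-expanded vocabulary, arranged so that their interpretation is automatically u.c.\ in the ambient metric model (for instance by metrically separating~$R^\cN$ from its complement). The \RPCd-framework's freedom to enlarge vocabularies thereby compensates for any shortage of u.c.\ named predicates in~$\cM^*$ itself.
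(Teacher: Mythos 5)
There is a genuine gap, and it is exactly at the point your last paragraph flags: the exclusion of proper growth of the relativized copy of~$\cM$. Lemma~\ref{L:nonprincipal ultrafilter} is stated (and its proof only works) for a \emph{discrete} full structure: both the ultrafilter $\mathcal{U}=\{R\subseteq M : b\in\Hat{R}\setminus R\}$ of part~(1) and the completeness computation of part~(2) require that \emph{arbitrary} subsets of the ground set (and of its square --- the sets $R_q$ and $\{(x,q):x\in R_q\}$, which moreover depend on the hypothetical new point~$b$ and so cannot be placed in the vocabulary in advance) be named by predicates of the structure. Your $\cM^*$ is full but not discrete: in a metric structure only uniformly isolated subsets have uniformly continuous characteristic functions, so for a general $\cM$ almost no subsets of~$M$ are named, $\Hat{R}$ is undefined for arbitrary $R\subseteq M$, and no ultrafilter on~$M$ arises from a new point~$b$. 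The proposed remedy does not repair this: metrically separating $R^{\cN}$ from the rest of~$\cN$ says nothing about uniform continuity with respect to $M$'s \emph{own} metric, and for connected~$\cM$ (say $M=[0,1]$) even your single set $Q$ admits no discrete predicate, let alone all the sets the lemma needs. Finally, even granting a legitimately named $Q$ with $\Hat{Q}=Q$, a new element $b\in(\cN\restriction R)\setminus M$ need not lie in $\Hat{Q}$, so $Q$ alone yields no ultrafilter and no contradiction. (A related symptom: your argument never uses completeness of~$\cM$, which is an essential hypothesis; also the exact surjectivity axiom $\exists w\in W\,(h(w)=x)$ is not enforceable, since $\exists$ is a supremum and need not be attained.)

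The missing idea is the paper's auxiliary \emph{discrete} sort. One forms a single full structure containing, besides $\cM$ and a model of the $(\omega,<)$-theory, a discretely metrized copy of the cardinal $\mu=\card(M)$ together with a named surjection $f:\mu\to M$. On the discrete part every subset is uniformly continuous, so there Lemma~\ref{L:nonprincipal ultrafilter} genuinely applies: using a named bijection between the non-growing copy of $\omega$ and a countable subset of~$\mu$, one shows $\mu$ does not grow in any $\sL$-elementary extension. Growth of $M$ itself is then excluded not by an ultrafilter argument on~$M$ but by metric means: the relativized sentences $(\forall x\in M)(\exists i\in\mu)(\dd(f(i),x)\le\epsilon)$ transfer, so $M=f(\mu)$ remains dense in the extension, and completeness of $\cM$ forces $M^*=M$. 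Some such detour through a discrete index set, followed by density-plus-completeness, is what your outline needs; with it, the rest of your construction (the $\omega$-presentation from Corollary~\ref{C:noncpct-rpc-omega} and the elementary diagram with constants) does fall in line with the paper's proof.
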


\begin{proof}
  Assume $\sL$ is not $[\omega,\omega]$-compact and let $\cM$ be any complete structure of cardinality~$\mu$ less than the first measurable cardinal.
  Let $L$ be the vocabulary for~$\cM$.
  
By Proposition~\ref{prop:nonrpc-implies-cpct}, let $T$ be a satisfiable theory in a vocabulary $L'$ (assumed disjoint from $L$ without loss of generality) that contains predicate symbols $P$ (monadic) and $\vartriangleleft $ (binary), and such that all models $\cN$ of $T$ have discrete interpretations $P^{\cN}$, $\vartriangleleft ^{\cN}$ with $(P^{\cN},\vartriangleleft ^{\cN})$ isomorphic to $(\omega, <)$.
  Fix one such model $\cN$.
  
  Let ${\cK}$ be the structure (on a vocabulary $\widetilde{L}$ extending both $L$ and $L'$) obtained as the full expansion of the following structure:

\begin{itemize}
\item The universe $K=\mu \sqcup M\sqcup N$ is the disjoint union of the cardinal $\mu$ and the universes of ${\cM}$ and ${\cN}$.

\item The metric $\dd$ of $\cK$ extends $\dd^{\cN}$ and $\dd^{\cM}$ discretely (i.e., it is discrete in $\mu$ and separates by 1 the three disjoint parts $\mu, M, N$ of $K$).

\item If $R$ is an $n$-ary predicate symbol of $L$ $($respectively $ L')$, then the interpretation $R^{\cK}$ is equal to $R^{\cM}$ on $M^{n}$ (respectively $R^{\cN}$ on $N^{n})$ and is the constant 1 on the rest of $K^{n}$.
  This applies, in particular, to the predicate symbols $ P $ and $\vartriangleleft $.

\item If $f$ is an $n$-ary function symbol of~$L$ (respectively, of~$L')$ then $f^{\cK}$ is $f^{\cM}$ on $M^{n}$ (respectively $f^{\cN}$ on $N^{n})$ and it is $0\in \mu $ in the rest of $K^{n}$.
 \end{itemize}
 
 Since ${\cK}$ is full, for every $n<\omega $, $\widetilde{L}$ has symbols interpreted as each uniformly continuous function $K^n\rightarrow \lbrack 0;1]$;
 in particular, $\widetilde{L}$ has symbols $\chi _{M}$, $\chi _{N},$ $\chi _{\mu }$ interpreted as the characteristic functions of $M$, $N$, $\mu$, respectively.
 Moreover, $\widetilde{L}$ contains a symbol $F$ interpreted as a function $f:\mu \rightarrow M$ which maps $\mu$ bijectively onto $M$, and maps $K\setminus M$ to $0\in \mu$.

Let now ${\cK}^{*}$ be any complete ${\sL}$-elementary extension of ${\cK}$, with universe $K^{*}$, and call $M^{*}$, $N^{*}$, $P^{*}$, $\vartriangleleft^{*}$, $\mu^{*}$, $f^{*}$, respectively, the extensions in ${\cK}^{*}$ of the discrete predicates $M$, $N$, $P$, $\vartriangleleft$, $\mu$ and the map $f$.

{\bf Claim 1}. $\mu$ does not grow, that is, $\mu ^{*}=\mu$.
The restriction ${\cK}^{*}\upharpoonright N^{*}$ is an $\sL$-elementary extension of ${\cK}\upharpoonright N$. Hence, by the choice of ${\cN}$, $(P^{*},\vartriangleleft ^{*})$ is isomorphic to $(\omega ,<)$ and thus $P^{*} $ is identical to $P$. 
Also ${\cK}^{*}\upharpoonright N^{*}\sqcup\mu ^{*} $ is a $\sL$-elementary extension of ${\cK}\upharpoonright N\sqcup\mu$ and thus, in particular, a $\sL_{\omega \omega }$-elementary extension with respect to the discrete full expansion of $N\sqcup \mu $.
Therefore, we have $\mu ^{*}=\mu$, by Lemma~\ref{L:nonprincipal ultrafilter}.

{\bf Claim 2}. ${\cM}$ does not grow, that is, ${\cM}^{*}={\cM}$. 
Since $f(\mu )=M$, the relativized formula
\[
(\forall x\in M)(\exists i\in \mu )(\dd(f(i),x)\leq \epsilon) 
\]
holds in~$\cK$ for any $\epsilon>0$, and thus also in the $\sL_{\omega\omega}$-elementary extension $\cK^*\supseteq\cK$, i.e.,
\[
{\cK}^{*}\models (\forall x\in M^{*})(\exists i\in \mu^{*})(\dd(f^{*}(i),x)\leq \epsilon).
\]
However, $\mu ^{*}=\mu$; thus, $M=f(\mu )=f^{*}(\mu )$ is dense in $(M^{*},d^{*})$.
Being complete, $M$ is closed in $M^{*}$, hence $M^{*}=M$.

The last claim implies that $\thryL(({\cK},a)_{a\in K})$ gives an \RPCd-characterization of~${\cM}$ since the relativization by~$\chi_M$ of any model thereof must be isomorphic to~$\cM$.
\end{proof}

\begin{remark}
\label{R:measurable}
If $\kappa$ is the first measurable cardinal, then the two-valued infinitary logic $\sL_{\kappa\kappa}$ is not $[\omega,\omega]$-compact. However, since every measurable cardinal is weakly compact, $\sL_{\kappa\kappa}$ is  $[\kappa,\kappa]$-compact;
hence, the structure $(\kappa,<)$ is not RPC$_\Delta$ in $\sL_{\kappa\kappa}$.
\end{remark}

\begin{corollary}\label{thm:Makowsky-Shelah-continuous}
  Let $\sL$ be a regular logic for metric structures.
  Let $\kappa$ be the smallest regular cardinal such that $\sL$ is $[\kappa,\kappa]$-compact ($\kappa = \infty$ if no such cardinal exists).
  Either $\kappa=\omega$, or $\kappa$ is at least as large as the first measurable cardinal.
  In particular, if no measurable cardinals exist, we must have $\kappa = \omega$ or $\kappa = \infty$.
\end{corollary}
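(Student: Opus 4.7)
The plan is to derive the corollary as an immediate consequence of Theorem~\ref{T:rpcdelta-characterizability of structures} (sufficiency of non-$[\omega,\omega]$-compactness for \RPCd-characterizing ``small'' structures) combined with Proposition~\ref{prop:cpct-implies-nonrpc} (which forbids \RPCd-characterizing $(\kappa,<)$ when $\sL$ is $[\kappa,\kappa]$-compact and $\kappa$ is regular). The strategy is proof by contradiction: suppose $\omega<\kappa$ and $\kappa$ is strictly smaller than the first measurable cardinal, and derive a contradiction between these two facts applied to the structure $(\kappa,<)$ regarded as a complete discrete metric structure.

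Concretely, I would proceed as follows. First, dispose of the trivial cases $\kappa = \omega$ and $\kappa = \infty$, which are allowed by the conclusion. So assume $\omega < \kappa < \infty$, and suppose for contradiction that $\kappa$ is strictly less than the first measurable cardinal. By minimality of~$\kappa$ among regular cardinals witnessing $[\kappa,\kappa]$-compactness, $\sL$ fails to be $[\omega,\omega]$-compact (since $\omega$ is a strictly smaller regular cardinal). Next, consider the structure $\cM = (\kappa,<)$, viewed as a complete metric structure on a vocabulary with a single binary predicate symbol, endowed with the discrete metric. Its cardinality is~$\kappa$, which is strictly less than the first measurable cardinal by assumption. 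By Theorem~\ref{T:rpcdelta-characterizability of structures} applied to this~$\cM$, the structure $(\kappa,<)$ is \RPCd-characterizable in~$\sL$.

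On the other hand, since $\kappa$ is a regular cardinal and $\sL$ is by hypothesis $[\kappa,\kappa]$-compact, Proposition~\ref{prop:cpct-implies-nonrpc} asserts that $(\kappa,<)$ is \emph{not} \RPCd\ in~$\sL$. This directly contradicts the previous conclusion, so the assumption that $\kappa$ lies strictly below the first measurable cardinal must fail; hence $\kappa$ is at least as large as the first measurable cardinal. The final ``in particular'' clause follows immediately: if no measurable cardinals exist at all, the only surviving options for the minimal regular $\kappa$ are $\kappa = \omega$ (where $\sL$ is already countably compact) or $\kappa = \infty$ (where no regular $[\kappa,\kappa]$-compactness holds beyond what is trivial).

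There is no real obstacle here beyond checking the hypotheses; the work is in the two inputs already stated. The only minor point requiring care is verifying that $(\kappa,<)$ qualifies as a complete structure of $\sL$ to which Theorem~\ref{T:rpcdelta-characterizability of structures} applies---this is immediate since the vocabulary contains just one discrete binary predicate and the discrete metric makes $\cM$ trivially complete---and confirming that $\kappa$ regular is precisely the hypothesis of Proposition~\ref{prop:cpct-implies-nonrpc}. Both checks are routine, and the two propositions dovetail perfectly to give the dichotomy.
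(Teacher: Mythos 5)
Your proof is correct and follows essentially the same route as the paper: it combines Theorem~\ref{T:rpcdelta-characterizability of structures} (non-$[\omega,\omega]$-compactness yields \RPCd-characterizability of small structures) with Proposition~\ref{prop:cpct-implies-nonrpc} (no \RPCd-characterization of $(\kappa,<)$ under $[\kappa,\kappa]$-compactness for regular~$\kappa$). The only cosmetic difference is that you apply the pair of results directly to $(\kappa,<)$ to get a contradiction at~$\kappa$ itself, whereas the paper phrases the argument as deriving failure of $[\lambda,\lambda]$-compactness for regular cardinals below the first measurable; the content is the same.
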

\begin{proof}
  If $\kappa$ is less than the first measurable cardinal and $\sL$ is not $[\omega,\omega]$-compact, then for any infinite regular cardinal $\lambda<\kappa$, the structure $(\lambda,<)$ is \RPCd-characterized in~$\sL$ by Theorem~\ref{T:rpcdelta-characterizability of structures}.
  Hence, $\sL$ is not $[\lambda,\lambda]$-compact, by Proposition~\ref{prop:cpct-implies-nonrpc} in contrapositive form.
\end{proof}

Corollary~\ref{thm:Makowsky-Shelah-continuous} is a generalization to the metric setting of a theorem of Makowsky and Shelah~\cite{Makowsky-Shelah:1983}.

If ${\sL}$ is a logic for metric structures and $(\DD,\preceq)$ a fixed directed set, let us say that \emph{the Uniform Metastability Principle (UMP) holds in~$\sL$ for $\DD$-nets} if the equivalence asserted in part~(3) of Theorem~\ref{thm:cpct-metastab} holds.

\begin{theorem}
\label{T:descent from nets to sequences}
  Let ${\sL}$ be a regular logic for metric structures and let $\kappa$ be a cardinal less than the first measurable.
  If the Uniform Metastability Principle holds for $\kappa$-sequences, then it holds for $\omega$-sequences.
\end{theorem}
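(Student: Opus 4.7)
The plan is to establish the theorem by contraposition: assuming the UMP fails for $\omega$-sequences in $\sL$, I will produce a $\kappa$-sequence witnessing the failure of the UMP at level $\kappa$. First, by the contrapositive of the implication $(1)\Rightarrow(3)$ in Theorem~\ref{thm:cpct-metastab} applied at level $\omega$, the failure of the UMP for $\omega$-sequences forces $\sL$ to be non-$[\omega,\omega]$-compact.

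Next, since $\kappa$ lies below the first measurable cardinal, Theorem~\ref{T:rpcdelta-characterizability of structures} provides an \RPCd-characterization of the discrete structure $(\kappa,<)$ in $\sL$: there is a theory $T^{*}$ in some expanded vocabulary $L^{*}$ and a discrete monadic predicate symbol $R\in L^{*}$ such that every model $\cM$ of $T^{*}$ satisfies $(R^{\cM},<^{\cM})\cong(\kappa,<)$. I would then expand $L^{*}$ by constants $(\bar{\alpha}:\alpha<\kappa)$ and enlarge $T^{*}$ to a theory $T^{**}$ by adjoining the axioms $R(\bar{\alpha})$ and $\bar{\alpha}<\bar{\beta}$ for all $\alpha<\beta<\kappa$. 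The theory $T^{**}$ is satisfiable (e.g., by the natural interpretation on $(\kappa,<)$), and in every model $\cM$ of $T^{**}$ the tuple $(\bar{\alpha}^{\cM}:\alpha<\kappa)$ is a strictly increasing $\kappa$-sequence in a set order-isomorphic to $\kappa$, hence is cofinal. The underlying set-theoretic fact is that any subset of $(\kappa,<)$ of cardinality $\kappa$ must be unbounded, since for each $\beta<\kappa$ the initial segment $\{\gamma:\gamma\le\beta\}$ has cardinality strictly less than $\kappa$.

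With $T^{**}$ in place, I would reproduce the construction from $(3)\Rightarrow(1)$ of Theorem~\ref{thm:cpct-metastab}: adjoin to $T^{**}$ a unary function symbol $f$ and constants $(c_{\alpha}:\alpha<\kappa)$, together with the axioms that $f$ is $\{\bar{0},\bar{1}\}$-valued, takes the value $\bar{0}$ off of $R$, is eventually $\bar{0}$ along $R$ (via $(\exists x\in R)(\forall y\in R)(x<y\rightarrow \dd(f(y),\bar{0})=0)$), and satisfies $\dd(c_{\alpha},f(\bar{\alpha}))=0$ for each $\alpha<\kappa$. The resulting theory $T'$ is uniform, and cofinality of $(\bar{\alpha}^{\cM'})$ ensures that in every model $\cM'$ of $T'$ the sequence $\cb^{\cM'}$ is an eventually-zero $\{0,1\}$-valued $\kappa$-sequence, with every such sequence realizable by a suitable choice of~$f$. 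Hence $\{\cb^{\cM'}:\cM'\models T'\}$ coincides with the family $C$ of Remarks~\ref{rem:metastability}~(5), which is Cauchy but not uniformly metastable; so $\cb$ is Cauchy modulo $T'$ but not uniformly metastable modulo $T'$, witnessing the failure of the UMP for $\kappa$-sequences.

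The main obstacle lies in the middle step, which must sidestep the fact that the hypothesis supplies only $[\omega,\omega]$-incompactness rather than $[\kappa,\kappa]$-incompactness (so Theorem~\ref{thm:k-k-cpct-cof} cannot be directly applied at level $\kappa$). The \RPCd-characterization of $(\kappa,<)$, together with the order-theoretic observation that any strictly increasing $\kappa$-sequence inside $\kappa$ is automatically cofinal, is what compensates for the missing compactness hypothesis and furnishes the cofinal $\kappa$-sequence needed to fuel the construction of the UMP-failing sequence.
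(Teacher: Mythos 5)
Your proof is correct and takes essentially the same route as the paper's: failure of the UMP for $\omega$-sequences yields non-$[\omega,\omega]$-compactness via Theorem~\ref{thm:cpct-metastab}, Theorem~\ref{T:rpcdelta-characterizability of structures} then makes $(\kappa,<)$ \RPCd\ in $\sL$, and the $(3)\Rightarrow(1)$ construction of Theorem~\ref{thm:cpct-metastab} is rerun at level~$\kappa$ to produce a Cauchy but not uniformly metastable family of $\kappa$-sequences. The difference is only presentational (contraposition rather than contradiction), and you helpfully spell out the step the paper leaves implicit: the adjoined constants are automatically cofinal because a strictly increasing $\kappa$-sequence inside $(\kappa,<)$ is unbounded, which is exactly what substitutes for the unavailable Theorem~\ref{thm:k-k-cpct-cof} at level~$\kappa$.
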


\begin{proof}
  Assume that $\kappa$ is less than the first measurable and that the UMP for $\kappa$-sequences.
  If $\sL$ were not $[\omega,\omega]$-compact, then $\kappa$ would be \RPCd\ in $\sL$ by Theorem~\ref{T:rpcdelta-characterizability of structures}.
  Proceeding as in the proof of the implication $(3)\Rightarrow(1)$ of Theorem~\ref{thm:cpct-metastab}, we would find a theory of $\sL$ for a class of Cauchy $\kappa$-sequences that is not uniformly metastable.
  Thus, $\sL$ must be~$[\omega,\omega]$-compact, so UMP in~$\sL$ holds for sequences, by Theorem~\ref{thm:cpct-metastab}.
\end{proof}

\begin{remark}
	\label{nets-large-to-small}
  In general, the Uniform Metastability Principle does not descend from large nets to small nets.
  For example, if $X$ is an infinite set and $\kappa$ is a regular cardinal such that  $\kappa > |X|$, then any $\kappa$-sequence $\fb = (f_\alpha:\alpha<\kappa)$ of maps $f_\alpha:X\to\mathbb{R}$ such that $\fb$ converges to some $f : X\to\RR$ pointwise must converge uniformly, and thus $(F_\alpha:\alpha<\kappa)$ is uniformly metastable.
  (To see this, given $\epsilon>0$,  for each $x\in X$ choose $\beta_x<\kappa$ such that $|f_\alpha(x)-f(x)|\le\epsilon$ for every $\alpha>\beta_x$;
  by regularity, $\beta := \sup_{x\in X}\beta_x < \kappa$, and thus $|f_\alpha(x)-f(x)|\le\epsilon$ for every $\alpha>\beta$ and every $x\in X$.)
  On the other hand, it is easy to exhibit a pointwise converging sequence $\fb = (f_n)_{n<\omega}$ that is not uniformly metastable, e.g., by adapting the construction of family~$D$ in Remarks~\ref{rem:metastability}:
  If $(x_i)_{i<\omega}$ is a countable collection of distinct points of~$X$, let $f_n(x) = 1$ if both (\emph{i})~$n$ is odd, and (\emph{ii})~$x=x_i$ for some $i>n$, and let $f_n(x) = 0$ otherwise.
  Then $f_n\to 0$ pointwise, but $\{\fb(x):x\in X\}$ is not uniformly metastable.
\end{remark}

%\bibliographystyle{amsalpha}
%\bibliography{../iovino,../biblioteca}

\def\cprime{$'$}
\providecommand{\bysame}{\leavevmode\hbox to3em{\hrulefill}\thinspace}
\providecommand{\MR}{\relax\ifhmode\unskip\space\fi MR }
% \MRhref is called by the amsart/book/proc definition of \MR.
\providecommand{\MRhref}[2]{%
  \href{http://www.ams.org/mathscinet-getitem?mr=#1}{#2}
}
\providecommand{\href}[2]{#2}

\end{document}